\newtheorem{theorem}{Theorem}[section]
\newtheorem{lemma}{Lemma}[section]
\newtheorem{proposition}{Proposition}[section]
\theoremstyle{definition}
\newtheorem{remark}{Remark}[section]
\newtheorem{conjecture}{Conjecture}
\numberwithin{equation}{section}
\newcommand\blfootnote[1]{\begingroup\renewcommand\thefootnote{}\footnote{#1}\addtocounter{footnote}{-1}\endgroup}
\begin{document}

\title{
{\bf\Large An indefinite nonlinear problem in population dynamics: high multiplicity of positive solutions\,}\footnote{
Work partially supported by the Grup\-po Na\-zio\-na\-le per l'Anali\-si Ma\-te\-ma\-ti\-ca, la Pro\-ba\-bi\-li\-t\`{a} e le lo\-ro
Appli\-ca\-zio\-ni (GNAMPA) of the Isti\-tu\-to Na\-zio\-na\-le di Al\-ta Ma\-te\-ma\-ti\-ca (INdAM). Progetto di Ricerca 2017:
``Problemi differenziali con peso indefinito: tra metodi topologici e aspetti dinamici''. Guglielmo Feltrin is supported by the Belgian F.R.S.-FNRS - Fonds de la Recherche Scientifique, \textit{Charg\'{e} de recherches} project: ``Quantitative and qualitative properties of positive solutions to indefinite problems arising from population genetic models: topological methods and numerical analysis'',  and partially by the project ``Existence and asymptotic behavior of solutions to systems of semilinear elliptic partial differential equations'' (T.1110.14).}
}
\author{{\bf\large Guglielmo Feltrin}
\vspace{1mm}\\
{\it\small Department of Mathematics, University of Mons}\\
{\it\small place du Parc 20, B-7000 Mons, Belgium}\\
{\it\small e-mail: guglielmo.feltrin@umons.ac.be}\vspace{1mm}\\
\vspace{1mm}\\
{\bf\large Elisa Sovrano}
\vspace{1mm}\\
{\it\small Department of Mathematics, Computer Science and Physics, University of Udine}\\
{\it\small via delle Scienze 206, 33100 Udine, Italy}\\
{\it\small e-mail: sovrano.elisa@spes.uniud.it}\vspace{1mm}}

\date{}

\maketitle

\vspace{5mm}

\begin{abstract}
\noindent Reaction-diffusion equations have several applications in the field of population dynamics and some of them are characterized by the presence of a factor which describes different types of food sources in a heterogeneous habitat. In this context, to study persistence or extinction of populations it is relevant the search of nontrivial steady states.
Our paper focuses on a one-dimensional model given by a parameter-dependent equation of the form $u'' + \bigl{(} \lambda a^{+}(t)-\mu a^{-}(t) \bigr{)}g(u) = 0$, where $g\colon \mathopen{[}0,1\mathclose{]} \to \mathbb{R}$ is a continuous function such that $g(0)=g(1)=0$, $g(s) > 0$ for every $0<s<1$ and $\lim_{s\to0^{+}}g(s)/s=0$, and the weight $a(t)$ has two positive humps separated by a negative one. In this manner, we consider bounded habitats which include two favorable food sources and an unfavorable one. We deal with various boundary conditions, including the Dirichlet and Neumann ones, and we prove the existence of eight positive solutions when $\lambda$ and $\mu$ are positive and sufficiently large.
Throughout the paper, numerical simulations are exploited to discuss the results and to explore some open problems.
\blfootnote{\textit{AMS Subject Classification:} 34B08, 34B15, 34B18.}
\blfootnote{\textit{Keywords:} Dirichlet problem, Neumann problem, indefinite weight, positive solutions, multiplicity results, shooting method.}
\end{abstract}

\section{Introduction}\label{section-1}

Many biological applications describing phenomena of population dispersal involve reaction-diffusion equations (cf.~\cite{CaCo-03,He-81,LG-16,Mu-89}). 
The problem we take into account in this work is motived by the interest in study the effect of the heterogeneity in finite habitats on populations, whose densities are influenced by location and time (cf.~\cite{CaCo-91} and the references therein).
In this context, most common formulations of reaction-diffusion problems, modeling the density $u=u(x,t)$ of a population, lead to semilinear parabolic problems of the form
\begin{equation}\label{eq-i1}
\begin{cases}
\, \dfrac{\partial u}{\partial t}= d\Delta u + w(x) g(u)  &\text{in }  \Omega\times \mathopen{]}0,+\infty\mathclose{[},\\
\, u(x,0)=u_{0}	 &\text{in  }   \partial \Omega,\\
\, \mathfrak{B}u=0	 &\text{on  }   \partial \Omega\times \mathopen{]}0,+\infty\mathclose{[},
\end{cases}
\end{equation}
where $d>0$ is a real parameter, $\Omega\subseteq\mathbb{R}^{N}$ is a bounded domain, $N\geq 1$, $w\colon\Omega\to\mathbb{R}$ is a weight term, $g\colon\mathcal{I}\to\mathbb{R}^{+}:=\mathopen{[}0,+\infty\mathclose{[}$ is a nonlinear function with $\mathcal{I}=\mathopen{[}0,1\mathclose{]}$ or $\mathcal{I}=\mathopen{[}0,+\infty\mathclose{[}$ such that $g(0)=0$ and $\mathfrak{B}$ is the boundary operator, that it could be of Dirichlet type, i.e.~$\mathfrak{B}u=u$, or of Neumann type, i.e.~$\mathfrak{B}u=\partial u/\partial\mathbf{n}$ with $\mathbf{n}$ the outward unit normal vector on $\partial\Omega$, or an alternation of the previous ones through the boundary. In particular, the Dirichlet boundary condition means that the exterior habitat is hostile, instead, the Neumann one means that there exists an inescapable barrier for the population leading to no-flux across the boundary.

The variation of the population density at location $x$ is modeled by the diffusion component $d\Delta u$, with diffusion rate $d$, and by the reaction component $w(x)g(u)$ which varies with the location, due to the spatial heterogeneity reflected by the function $w(x)$ that is assumed to be sign-changing. 
In other words, we look at the so-called case of \emph{indefinite weight problems} (cf.~\cite{HeKa-80}), which involve a weight term $w(x)$ which is positive, zero or negative in different parts of $\Omega$. This way, one could consider a ``food source''  for the population which, in different regions of the habitat $\Omega$, is good (favorable), neutral or worst (unfavorable), respectively. In this context, solutions to such kind of reaction-diffusion problems usually describe the population densities and also, in the particular case of problems arising in population genetics, the distributions of some gene frequency within a population. 

The search of steady state solutions to \eqref{eq-i1} and the analysis of their stability turn out to be crucial in order to address questions about extinction, persistence or coexistence for population. 
In more detail, to investigate effects of the indefinite weight term on the dynamics of \eqref{eq-i1} it is essential to deal with the steady state problem given by 
\begin{equation}\label{eq-i2}
\begin{cases}
\, -\Delta u = \lambda w(x) g(u)  &\text{in } \Omega,\\
\, \mathfrak{B}u=0	 &\text{on  }  \partial \Omega,
\end{cases}
\end{equation}
where $\lambda:=1/d$ is a positive real parameter. Therefore, some problems in population dynamics concern the study of existence or nonexistence as well as uniqueness or multiplicity of  positive (nontrivial) solutions to the indefinite weight problem \eqref{eq-i2}, by varying the parameter $\lambda$.

In the past decades, a great deal of attention has been devoted to indefinite weight problems, like the ones in \eqref{eq-i2}, starting from the pioneering works \cite{BrLi-80, HeKa-80, LG-96, MaMi-73}.
Dealing with several types of boundary conditions and with a wide variety of weight functions $w(x)$ and nonlinearities $g(u)$, the research on positive solutions to \eqref{eq-i2} has grown up at the end of the Eighties
(cf.~\cite{AlTa-93,AmLG-98,BaPoTe-88,BCN-94,BCN-95,BrOs-86,BrHe-90,Se-83}).
Recent literature shows that it is still a very active area of investigation (cf.~\cite{BoGoHa-05,Bo-11,BoFeZa-16prse,BoGa-16,FeSo-NA,FeZa-15jde,GaHaZa-03,GiGo-09,GoLo-00,LoNa-02,LoNaNi-13,LoNiSu-10,NaNiSu-10,ObOm-06,So-17} and the references therein, in order to cover most of the results achieved with different techniques so far). 

Different kinds of populations dynamics can be described depending on the choice of the nonlinear term $g(u)$. For instance, starting from the pioneering works in population genetics concerning the spatial spread of an advantageous gene carried out by Fisher \cite{Fi-37} and by Kolmogorov-Petrovsky-Piskounov \cite{KPP-37}, generalized then by Fleming \cite{Fl-75} and by Henry \cite{He-81}, it has turned out relevant to consider a family of nonlinearities given by a function $g\colon\mathopen{[}0,1\mathclose{]}\to\mathbb{R}^{+}$ such that 
\begin{equation*}
g(0) = g(1) = 0, \qquad g(s) > 0 \quad \text{for } \, 0 < s < 1.
\leqno{(g_{*})}
\end{equation*}
By dealing with nonlinearities $g(u)$ satisfying $(g_{*})$, the dynamics of problem \eqref{eq-i1} ensures   the existence of a trivial steady state and so a trivial positive solution to~\eqref{eq-i2} given by $u\equiv1$.

In the present work, we focus on nonlinearities $g(u)$ characterized by conditions in $(g_{*})$ and deal with the multiplicity of positive solutions to \eqref{eq-i2} avoiding the trivial one. In this framework, there are some fundamental results in the literature that state the existence of at least two positive solutions for both Dirichlet and Neumann problems that are stepping stones for our investigations.

Under the choice of Dirichlet boundary conditions in \eqref{eq-i2}, we recall the following significant result of multiplicity proved in \cite{Ra-73}.

\begin{theorem}[Rabinowitz, 1973/74]\label{th-Rab}
Let $g \colon \mathopen{[}0,1\mathclose{]} \to \mathbb{R}^{+}$ be a locally Lipschitz continuous function satisfying $(g_{*})$ and $\lim_{s\to 0^{+}} g(s)/s = 0$.
Let $w \colon \Omega \to \mathbb{R}$ be a continuous function such that there exists $x_{0}\in\Omega$ with $w(x_{0})>0$. Then, there exists $\lambda^{*}>0$ such that for each $\lambda>\lambda^{*}$ problem \eqref{eq-i2} with Dirichlet boundary conditions has at least two positive solutions.
\end{theorem}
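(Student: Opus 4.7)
The plan is to obtain both positive solutions by a variational argument on a suitably truncated problem. I would define
$$
\tilde{g}(s) :=
\begin{cases}
0, & s \leq 0, \\
g(s), & 0 \leq s \leq 1, \\
0, & s \geq 1,
\end{cases}
\qquad \tilde{G}(s) := \int_0^s \tilde{g}(\tau)\, d\tau,
$$
and the associated $C^1$ energy
$$
J_\lambda(u) = \tfrac{1}{2} \int_\Omega |\nabla u|^2\, dx - \lambda \int_\Omega w(x)\, \tilde{G}(u)\, dx
$$
on $H_0^1(\Omega)$. Since $\tilde{G}$ is bounded on $\mathbb{R}$, $J_\lambda$ is coercive and sequentially weakly lower semicontinuous, and so it admits a global minimizer $u_1$, which is a weak solution of $-\Delta u = \lambda w(x)\tilde{g}(u)$. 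Testing this equation against $u_1^-$ and against $(u_1 - 1)^+$—both of which annihilate $\tilde{g}(u_1)$—yields $0 \leq u_1 \leq 1$, so $u_1$ actually solves the original untruncated problem.

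To force $u_1$ to be nontrivial I would use the hypothesis $w(x_0)>0$: by continuity, $w \geq w_0 > 0$ on a small ball $B \subset \Omega$ around $x_0$, and I pick a bump $\varphi \in C_c^\infty(B)$ with $0 \leq \varphi \leq 1/2$ and $\varphi \not\equiv 0$. Then $\int_\Omega w\, \tilde{G}(\varphi) \geq w_0 \int_B \tilde{G}(\varphi) > 0$ while $\|\nabla\varphi\|_2^2$ is fixed, so $J_\lambda(\varphi)<0$ whenever $\lambda$ exceeds some $\lambda_1^*>0$. Hence $J_\lambda(u_1) \leq J_\lambda(\varphi) < 0$ and $u_1 \not\equiv 0$. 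A maximum-principle argument—applied locally in the open set $\{u_1>0\}$ and propagated using the connectedness of $\Omega$—promotes $u_1 \geq 0$ to $u_1>0$ in $\Omega$, producing the first positive solution.

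For the second solution I would apply the Mountain Pass Theorem of Ambrosetti–Rabinowitz. The sublinearity $\lim_{s\to 0^+}g(s)/s = 0$ together with the $L^\infty$ bound on $\tilde{G}$ furnishes an estimate $\tilde{G}(s) \leq \varepsilon s^2 + C(\varepsilon)|s|^p$ for some subcritical $p>2$ and any $\varepsilon>0$; via Sobolev embedding,
$$
J_\lambda(u) \geq \Bigl(\tfrac{1}{2}-\lambda\varepsilon\|w\|_\infty C_S\Bigr)\|u\|^2 - \lambda C(\varepsilon)\|w\|_\infty C_S'\,\|u\|^p,
$$
which is bounded below by some $\alpha>0$ on a sphere $\|u\|=\rho(\lambda)$, once $\varepsilon$ is chosen small enough. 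Thus $0$ is a strict local minimum of $J_\lambda$ with $J_\lambda(0)=0$, while $J_\lambda(u_1)<0$ ensures the functional dips below this well along a path from $0$ to $u_1$. Coercivity of $J_\lambda$ plus the compact embedding $H_0^1\hookrightarrow L^q$ for subcritical $q$ delivers the Palais–Smale condition, so the MPT produces a critical point $u_2$ at a strictly positive critical level; in particular $u_2 \neq 0$ and $u_2 \neq u_1$. Truncation and the maximum principle again confine $u_2$ to $(0,1)$.

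The main obstacle I anticipate is the strict positivity step when $w$ is sign-changing: the classical strong maximum principle requires a nonnegative zero-order coefficient, whereas the linearized coefficient $\lambda w(x)\tilde{g}(u)/u$ can be negative on $\{w<0\}$. I would circumvent this by first proving $u>0$ in the region around $x_0$ where $w>0$ via the standard strong maximum principle, and then propagating nonvanishing across the nodal set of $w$ using the unique continuation principle for $-\Delta u = V(x) u$ with $V \in L^\infty$. A secondary technical point is calibrating a \emph{single} threshold $\lambda^*$ so that both the geometric condition $J_\lambda(\varphi)<0$ and the quadratic estimate at $0$ persist simultaneously; this is handled by taking $\lambda^* := \max\{\lambda_1^*, \lambda_2^*\}$ with explicit constants from the two constructions.
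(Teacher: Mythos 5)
The paper only \emph{quotes} this theorem from Rabinowitz's 1973/74 article and offers no proof of its own, so there is no internal argument to compare against; your variational scheme --- truncation, global minimization of the coercive functional $J_\lambda$ for the first solution, mountain pass for the second --- is essentially the classical proof of this result, and its main lines are correct. The $L^\infty$ confinement via testing with $u_1^{-}$ and $(u_1-1)^{+}$, the choice of $\lambda_1^{*}$ from $J_\lambda(\varphi)<0$ on a ball where $w\geq w_0>0$, the small-sphere estimate $\tilde G(s)\leq \varepsilon s^2+C(\varepsilon)|s|^p$ coming from $(g_0)$ plus the boundedness of $\tilde G$, and the Palais--Smale condition from coercivity and compact embeddings are all sound (you are implicitly reading the hypothesis as $w$ bounded, e.g.\ $w\in C(\overline\Omega)$, which is the standard setting; note also that $\|u_1\|>\rho$ follows automatically since $J_\lambda\geq 0$ on the ball of radius $\rho$ while $J_\lambda(u_1)<0$).

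The one step I would tighten is strict positivity. Unique continuation for $-\Delta u=V(x)u$ is not the right tool here: it rules out vanishing on an open set (or to infinite order at a point), whereas you must exclude \emph{isolated} interior zeros of a nonnegative solution, which unique continuation does not do directly. The standard fix avoids your case analysis on the nodal set of $w$ entirely: define $c(x):=-\lambda w(x)\,g(u_1(x))/u_1(x)$ where $u_1(x)>0$ and $c(x):=0$ where $u_1(x)=0$; this $c$ is bounded because $(g_0)$ together with the continuity of $g$ forces $g(s)/s$ to be bounded on $\mathopen{]}0,1\mathclose{]}$, and $c\,u_1=-\lambda w\,g(u_1)$ everywhere. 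The equation then reads $-\Delta u_1+c^{+}(x)u_1=c^{-}(x)u_1\geq 0$, and the strong maximum principle (or Harnack's inequality) for operators with a bounded \emph{nonnegative} zero-order coefficient yields $u_1>0$ throughout the connected domain $\Omega$. The same remark applies to $u_2$. With that substitution the argument is complete.
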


On the other hand, even if for Dirichlet problems no special assumptions on $w(x)$ are needed, the choice of Neumann boundary conditions in \eqref{eq-i2} leads to necessary conditions on the weight term, in order to ensure the existence of positive solutions. Indeed, for Neumann problems a first necessary condition for the existence of positive solutions requires that $w(x)$ is of indefinite sign (it follows by a straightforward integration of the differential equation in \eqref{eq-i2} with Neumann boundary conditions). However, alongside this remark, the existence or nonexistence of positive solutions for the Neumann problem is also influenced by the sign of $\int_{\Omega}w(x)\,dx$ (cf.~\cite{He-81,LoNa-02,NaNiSu-10,Se-83}). Concerning the multiplicity of positive solutions to the Neumann problem~\eqref{eq-i2}, we recall the following result presented in \cite{LoNiSu-10}.
 
\begin{theorem}[Lou, Ni and Su, 2010]\label{th-LNS}
Let $g \colon \mathopen{[}0,1\mathclose{]} \to \mathbb{R}^{+}$ be a function of class $\mathcal{C}^{1}$ satisfying $(g_{*})$, $g'(0) = 0$ and such that there exists $k_{1}\geq1$ with $\lim_{s\to0^{+}}g(s)/s^{k_{1}}>0$.
Let $w \colon \Omega \to \mathbb{R}$ be a sign-changing continuous function such that $\int_{\Omega}w(x)\,dx<0$. Then, there exists $\lambda^{*}>0$ such that for each $\lambda>\lambda^{*}$ problem \eqref{eq-i2} with Neumann boundary conditions has at least two positive solutions.
\end{theorem}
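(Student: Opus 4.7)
I would attack Theorem~\ref{th-LNS} by a variational/critical-point argument applied to the energy functional
\[
J_\lambda(u) = \frac{1}{2}\int_\Omega |\nabla u|^2\,dx - \lambda \int_\Omega w(x)\, G(u)\,dx, \qquad G(s)=\int_0^s g(\xi)\,d\xi,
\]
on $H^1(\Omega)$ (subject to the Neumann boundary condition, which is natural for this functional). Since $g$ is only defined on $[0,1]$, I would first truncate by extending $g(s)=0$ for $s\le 0$ and for $s\ge 1$; using the strong maximum principle together with $g(0)=g(1)=0$, any nonconstant nonnegative critical point of the truncated functional automatically satisfies $0<u<1$ and solves the original problem.

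First I would produce one positive solution as a \emph{global minimizer}. The condition $\int_\Omega w\,dx<0$ ensures $J_\lambda(c)=-\lambda G(c)\int_\Omega w\,dx\ge 0$ on constants, so the functional cannot run to $-\infty$ along the zero-gradient direction; combined with the truncation (which makes $G$ bounded) and the Poincar\'e inequality on the zero-mean subspace, $J_\lambda$ is coercive and weakly lower semicontinuous on $H^1(\Omega)$. For $\lambda$ large I would exhibit a test function supported in a component where $w>0$ with $J_\lambda<0$, so the infimum is strictly negative and is attained at some $u_1$ with $0<u_1<1$. The second solution I would extract by \emph{mountain pass} between two local minima of $J_\lambda$, namely $u_1$ (at negative energy) and the trivial state $u\equiv 0$. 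To see that $u\equiv 0$ is a strict local minimum I would use the assumption $g'(0)=0$ together with $\lim_{s\to 0^+}g(s)/s^{k_1}>0$: these force $G(s)=o(s^2)$ as $s\to 0^+$, so that the quadratic Dirichlet term dominates the indefinite potential term for small $\|u\|_{H^1}$, and $u\equiv 0$ is separated from $u_1$ by a positive energy barrier. Standard Ambrosetti-Rabinowitz mountain-pass theory, together with a Palais-Smale verification made easy by the truncated (hence bounded) nonlinearity, then yields a second critical point $u_2$ at strictly positive critical level, which must be distinct from $u_1$ since their energies have opposite signs, and lies in $(0,1)$ by the truncation/maximum principle argument above.

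\paragraph{Main obstacle.} The delicate step is showing that $u\equiv 0$ is a strict local minimum of $J_\lambda$ in $H^1(\Omega)$ in the Neumann setting, because here one cannot exploit a Poincar\'e inequality that controls $u$ itself; one must decompose $u=\bar u+v$ into mean and zero-mean part and use the polynomial lower bound on $g$ near $0$ together with a Sobolev embedding to absorb the $\bar u$-component. This is also where the sign hypothesis $\int_\Omega w\,dx<0$ reappears: it prevents the energy from being driven negative along positive constants in a neighborhood of $0$. The threshold $\lambda^*$ is then determined jointly by (i) making the infimum $J_\lambda(u_1)<0$ via the concentrating test function, and (ii) keeping the local-minimum barrier at $u\equiv 0$ intact, so that the mountain-pass geometry is preserved.
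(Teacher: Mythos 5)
A preliminary remark: the paper does not prove Theorem~\ref{th-LNS} at all; it is recalled from the reference of Lou, Ni and Su and used as a black box (only Theorem~\ref{th-Rab} is actually invoked in the proofs of Section~2). So there is no internal proof to compare against, and I can only assess your sketch on its own merits. The overall plan --- truncate $g$ outside $[0,1]$, get one solution as a minimizer of $J_\lambda$ at negative level, get a second one by a mountain pass between $u\equiv 0$ and that minimizer --- is the standard variational route for this statement and is essentially the strategy of the original reference. However, two of your intermediate claims are false as stated, and they hide the real work.

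First, with $g\equiv 0$ outside $[0,1]$ the functional $J_\lambda$ is \emph{not} coercive on $H^1(\Omega)$: it vanishes identically on the ray of nonpositive constants and tends to the finite positive value $-\lambda G(1)\int_\Omega w\,dx$ along large positive constants, so the mean of a minimizing sequence is not controlled by the energy. This is reparable (once $\inf J_\lambda<0$ is known, a minimizing sequence whose mean escapes to $\pm\infty$ has energy with nonnegative limit inferior), but the repair must be made. More seriously, $u\equiv 0$ is \emph{not} a strict local minimum, and the mountain-pass ring condition $\inf_{\|u\|_{H^1}=\rho}J_\lambda>0$ fails: every nonpositive constant is a critical point of the truncated functional at the same level $0$, arbitrarily close to $0$ in $H^1(\Omega)$. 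Your decomposition $u=\bar u+v$ together with $\int_\Omega w\,dx<0$ only controls the direction of \emph{positive} constants (where indeed $J_\lambda(c)=-\lambda G(c)\int_\Omega w\,dx>0$); it does nothing against the degenerate ray of nonpositive constants created by the truncation. One must either work in the cone of nonnegative functions, choose a truncation that penalizes negative values, or invoke a mountain-pass theorem valid for non-strict local minimizers and then verify separately that the minimax level is strictly positive. Finally, a positive minimax level does not by itself yield a second solution with $0<u<1$: all constants $c\geq 1$ are critical points of the truncated functional at the common positive level $-\lambda G(1)\int_\Omega w\,dx$, so you must additionally show that the mountain-pass level lies strictly below this value (for instance by exhibiting an explicit path from $0$ to the minimizer along which the energy stays below it); otherwise the mountain-pass critical point could be the trivial state $u\equiv 1$, which does not count.
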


Our goal is to pursue investigations on multiplicity of positive solutions to indefinite weight problems \eqref{eq-i2}, characterizing the indefinite weight term $w(x)$ in terms of positivity and negativity regions, by introducing real parameters which govern the intensity of ``food sources'' within $\Omega$. 
As in \cite{FeSo-NA}, we take advantage of a one-dimensional setting by considering a one-dimensional region $\Omega=\mathopen{]}0,T\mathclose{[}$. From a biological viewpoint, we notice that such kind of environment appears to be very important in the modeling of population's spread in neighborhood of rivers, seashore or narrow valley (cf.~\cite{LLMM-06,Na-78}). Secondly, we describe the ``food sources'' which are favorable, neutral or unfavorable by means of the intervals of positivity, nullity or negativity of a weight function $a\colon\mathopen{[}0,T\mathclose{]}\to\mathbb{R}$. Consequently, as usual in the context of ordinary differential equations, we denote by $t$ the independent (spatial) variable and we deal with an indefinite boundary value problem of the form
\begin{equation}\label{bvp-intro}
\begin{cases}
\, u'' + a(t) g(u) = 0, \\
\, \mathcal{BC}(u,u') = 0_{\mathbb{R}^{2}},
\end{cases}
\end{equation}
where the weight term $a(t)$ is sign-changing and the nonlinearity $g\colon \mathopen{[}0,1\mathclose{]} \to \mathbb{R}^{+}$ is a continuous function 
satisfying $(g_{*})$ and such that
\begin{equation*}
\lim_{s\to 0^{+}} \dfrac{g(s)}{s} = 0.
\leqno{(g_{0})}
\end{equation*}
Concerning the boundary conditions $
\mathcal{BC}(u,u')$, we investigate the following ones:
\begin{equation}\label{BC}
\mathcal{BC}(u,u')\in\Bigl{\{}\bigl{(}u(0),u(T)\bigr{)},\bigl{(}u(0),u'(T)\bigr{)},\bigl{(}u'(0),u(T)\bigr{)},\bigl{(}u'(0),u'(T)\bigr{)}\Bigr{\}},
\end{equation}
namely Dirichlet, Neumann or mixed-type ones.
Our purpose is the study of positive nontrivial solutions to problem \eqref{bvp-intro}, i.e.~solutions $u(t)$ such that $0< u(t)<1$ for all $t\in\mathopen{[}0,T\mathclose{]}$. 

In the same spirit of \cite{BoFeZa-17tams,BoGa-16dcds,FeSo-NA,SoZa-17}, given $\lambda$ and $\mu$ positive real numbers, we consider the following parameter-dependent weight term
\begin{equation}\label{eq-weight}
a(t)=a_{\lambda,\mu}(t):=\lambda a^{+}(t)-\mu a^{-}(t),
\end{equation}
with $a^{+}(t)$ and $a^{-}(t)$ denoting the positive and the negative part of the function $a(t)$, respectively. We notice that investigations involving such kind of indefinite weight terms  have been introduced in literature by the works \cite{Lo-97,Lo-00}.
Finally, our aim is to discuss the relation between the heterogeneity of the habitat, described by nodal properties of $a(t)$, and the number of positive solutions to~\eqref{bvp-intro}. Indeed, by assuming that the weight term $a(t)$ has two positive humps divided by a negative one, our main achievement addresses multiplicity of positive solutions to problem~\eqref{bvp-intro}.
Indeed, we propose a result about high multiplicity of positive solutions, in comparison with the results of the existence of two positive solutions obtained in the same setting both in Theorem~\ref{th-Rab} and in Theorem~\ref{th-LNS}.

\begin{theorem}\label{th-main-intro}
Let $g \colon \mathopen{[}0,1\mathclose{]} \to \mathbb{R}^{+}$ be a locally Lipschitz continuous function satisfying $(g_{*})$ and $(g_{0})$.
Let $a \colon \mathopen{[}0,T\mathclose{]} \to \mathbb{R}$ be a continuous function for which there exist $\sigma,\tau\in\mathopen{]}0,T\mathclose{[}$, with $\sigma < \tau$, such that $a(t)\geq 0$ for every $t\in\mathopen{[}0,\sigma \mathclose{]}\cup\mathopen{[}\tau,T \mathclose{]}$, $a(t)\leq 0$ for every $t\in\mathopen{[}\sigma,\tau \mathclose{]}$ and $a(t_{i})\not=0$ with $i\in\{1,2,3\}$ for some $t_{1}\in\mathopen{[}0,\sigma \mathclose{]}$, $t_{2}\in\mathopen{[}\sigma,\tau \mathclose{]}$ and $t_{3}\in\mathopen{[}\tau,T \mathclose{]}$. 
Then, there exists $\lambda^{*}>0$ such that for each $\lambda>\lambda^{*}$ there exists $\mu^{*}(\lambda)>0$ such that for every $\mu>\mu^{*}(\lambda)$ problem $\eqref{bvp-intro}$ has at least eight positive solutions.
\end{theorem}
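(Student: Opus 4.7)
My plan is a phase-plane shooting argument combined with a topological ``stretching along the paths'' lemma applied piecewise to the three subintervals $I_1 = [0,\sigma]$, $I_2 = [\sigma,\tau]$, $I_3 = [\tau,T]$. Rewriting \eqref{bvp-intro} as a planar system and letting $\Phi_i$ denote the Poincaré operator on $I_i$, a positive solution corresponds to an orbit $(u(t),u'(t))$ lying inside the strip $\Sigma = \{0<u<1\}$ for all $t\in[0,T]$ whose endpoints lie on the linear subspaces prescribed by the chosen BC from \eqref{BC}; hence the task reduces to intersecting an initial fiber with the preimage under the composed map $\Phi_3\circ\Phi_2\circ\Phi_1$ of a terminal fiber. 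The overall idea is that on each subinterval, the dynamics for large parameters produces a binary ``choice'' for orbits that remain in $\Sigma$, and combining three binary choices yields $2^3=8$ distinct solutions.

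On each positive hump $I_1,I_3$, the equation $u''+\lambda a^+(t)g(u)=0$ has $u''\leq 0$, and hypothesis $(g_0)$ makes the origin a super-stable rest point: orbits with very small data remain near the origin, while slightly larger orbits perform a near-full rotation in phase plane when $\lambda$ is large, as in the mechanism underlying Theorem \ref{th-Rab}. I would make this precise by exhibiting, inside $\Sigma$, two disjoint annular regions $A^{-}$ (small-amplitude orbits near $\{u=0\}$) and $A^{+}$ (large-amplitude orbits reaching near $\{u=1\}$) on which $\Phi_1$ (respectively $\Phi_3$) stretches a pair of opposite sides of a topological rectangle across the corresponding sides of a target rectangle. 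This is the single-hump component of the argument.

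On the negative hump $I_2$ the equation $u''=\mu a^-(t)g(u)$ produces convex orbits, and for $\mu$ large I expect a sharp dichotomy among orbits remaining in $\Sigma$ throughout $I_2$: each either stays close to $\{u=0\}$ for most of the interval or close to $\{u=1\}$, since orbits with intermediate values are pushed out of the strip by convex growth. This yields two disjoint ``transit channels'' for $\Phi_2$, and by taking $\mu$ large one arranges that the image under $\Phi_2$ of the small- (resp. large-) amplitude annulus of $\Phi_1$ can reach either annulus of $\Phi_3$. With three intervals and two choices each, this produces $2^3=8$ compatible symbolic itineraries; for each itinerary the composition of the three stretching maps preserves the stretching property, and the Papini-Zanolin lemma (or an equivalent planar degree argument) yields at least one positive solution realizing that itinerary. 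The eight solutions are pairwise distinct because they are separated at the checkpoints $t=\sigma,\tau$.

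The main obstacle will be making the dichotomy on $I_2$ rigorous, uniformly in $\mu$ large, and verifying that the resulting channels align properly with the twist regions on $I_1$ and $I_3$ so that the composed stretching property survives. A secondary difficulty is the uniform treatment of the four boundary conditions in \eqref{BC}: the initial and terminal fibers differ between the Dirichlet, Neumann, and mixed cases, but the stretching lemma is robust enough to accommodate each, provided the quantitative twist-and-transit estimates are sharp enough that the fibers genuinely cross the relevant stretching rectangles. The rest of the argument — continuity of $\Phi_i$, $\Sigma$-invariance of orbits realizing each itinerary, and disjointness of the 8 solutions — I expect to follow by straightforward phase-plane bookkeeping.
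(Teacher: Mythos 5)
Your overall framework (phase-plane shooting, Poincar\'{e} maps on the three nodal subintervals, a topological crossing lemma) is the right one, but the combinatorial mechanism you propose -- a binary choice on each of the three subintervals, giving $2^{3}=8$ itineraries -- is not the mechanism that actually produces the eight solutions, and I do not believe it can be made to work. The correct structure is $3\times 3-1=8$: one fixes $\kappa$ inside the negative hump, shoots the initial fiber forward to $\kappa$ and the terminal fiber backward to $\kappa$, and shows (Propositions~\ref{prop-2.1}--\ref{prop-2.4}) that \emph{each} positive hump, followed by its adjacent piece of negative hump, turns its fiber into \emph{three} pairwise disjoint sub-continua crossing the strip $\mathopen{]}0,1\mathclose{[}\times\mathbb{R}$ from $\{0\}\times\mathopen{]}-\infty,0\mathclose{]}$ to $\{1\}\times\mathopen{]}0,+\infty\mathclose{[}$ (resp.\ the reversed crossing for the backward shooting); the $9$ pairwise intersections minus the one at the origin give the $8$ solutions. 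The three sub-continua per positive hump correspond to ``very small'', ``small'' and ``large/near $1$'' behavior: the first exists because $(g_{0})$ lets orbits linger near the equilibrium $u=0$, the third because $g(1)=0$ lets orbits linger near the equilibrium $u=1$, and the second is the Rabinowitz-type solution in between. Your two annuli $A^{-}$, $A^{+}$ capture only two of these, so your positive-hump analysis yields the superlinear count ($2$ crossings per hump, hence $2^{2}-1=3$ solutions by the standard argument), not what is needed here.

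The extra factor you hope to extract from the negative hump does not exist: for $\mu$ large the map $\Phi_{2}$ acts as a \emph{stretcher}, pushing every point of $\mathopen{]}0,1\mathclose{[}\times\mathbb{R}$ with $x(\sigma)$ bounded away from $0$ across the line $\{x=1\}$ (Lemmas~\ref{lem-A3.1}--\ref{lem-A3.2}), so each incoming continuum is mapped onto a single new crossing of the strip -- it does not bifurcate into two independent channels. Indeed your binary label on $I_{2}$ is not even well defined for the orbits that matter (those transiting from near $0$ at $t=\sigma$ to near $1$ at $t=\tau$ do not stay in either channel), and continuity at the junctions would rule out half of the itineraries such as $(S,1,S)$. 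The numerical agreement $2^{3}=3^{2}-1=8$ is a coincidence of the case $m=2$: for $m$ positive humps your scheme would predict $2^{2m-1}$ solutions while the correct count is $3^{m}-1$ (cf.\ Conjecture~\ref{con-2}), e.g.\ $32$ versus $26$ for $m=3$. To repair the proof you must (i) produce the third sub-continuum on each positive hump by exploiting the equilibrium at $u=1$, and (ii) let the negative hump play only the role of stretching each of the three sub-continua across the whole strip so that the forward and backward families intersect pairwise.
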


In this work, taking advantage of the one-dimensional framework, we give a full description of the dynamics in the phase-plane $(u,u')$ and exploit the shooting method to find positive nontrivial solutions. In more detail, we obtain our result of multiplicity by studying the trajectories of the phase-plane system associated with \eqref{bvp-intro}, in dependence on $\lambda$ and $\mu$.

\medskip

The paper is structured as follows. Section~\ref{section-2} is devoted to our main result (Theorem~\ref{th-main}): first we examine the problem from phase-plane perspective and then we give the proof. Section~\ref{section-3} collects some numerical experiments and bifurcation diagrams that illustrate the abstract formulation and justify a conjecture on the number of solution when a weight with more sign-changes is considered. A final Appendix contains all the technical details for the study of the solutions on the intervals of positivity or negativity of the weight.

\section{Multiplicity of positive solutions}\label{section-2}

In this section we investigate on nontrivial solutions to the boundary value problem
\begin{equation*}
\begin{cases}
\, u'' + \bigl{(}\lambda a^{+}(t)-\mu a^{-}(t)\bigr{)} g(u) = 0, \\
\, \mathcal{BC}(u,u') = 0_{\mathbb{R}^{2}},
\end{cases}
\leqno{(\mathscr{P}_{\lambda,\mu})}
\end{equation*}
where the boundary conditions $\mathcal{BC}(u,u')$ are defined as in \eqref{BC}.
Through the section we implicitly assume that the function $g \colon \mathopen{[}0,1\mathclose{]} \to {\mathbb{R}}^{+}$ is locally Lipschitz continuous satisfying $(g_{*})$ and $(g_{0})$. We extend the function $g(s)$ continuously to $\mathbb{R}$, by setting
\begin{equation*}
g(s)= 0, \quad \text{for } \, s\in\mathopen{]}-\infty,0\mathclose{[}\cup\mathopen{]}1,+\infty\mathclose{[},
\end{equation*}
and we denote this extension still by $g(s)$.

Moreover, we consider a weight $a\in L^{1}(0,T)$ which satisfies the following sign-condition:
\begin{itemize}
\item[$(a_{*})$] \textit{there exist $\sigma,\tau\in\mathopen{]}0,T\mathclose{[}$, with $\sigma < \tau$, such that $a(t)\succ 0$ on $\mathopen{[}0,\sigma \mathclose{]}$, $a(t)\prec 0$ on $\mathopen{[}\sigma,\tau \mathclose{]}$, and $a(t)\succ 0$ on $\mathopen{[}\tau,T \mathclose{]}$.}
\end{itemize}
The symbol $a(t) \succ 0$ means that $a(t)\geq 0$ almost everywhere 
on a given interval with $a\not\equiv 0$ on that interval, and $a(t) \prec 0$ stands for $-a(t) \succ 0$.

In our context, a \textit{solution} $u(t)$ of problem $(\mathscr{P}_{\lambda,\mu})$ is meant in the Carath\'{e}odory's sense and is such that $0\leq u(t)\leq1$ for all $t\in\mathopen{[}0,T\mathclose{]}$. Moreover, we say that a solution is \textit{positive} if $u(t)>0$ for all $t\in\mathopen{]}0,T\mathclose{[}$.

The strategy adopted here to prove multiplicity of positive (nontrivial) solutions to problem $(\mathscr{P}_{\lambda,\mu})$ relies on the \textit{shooting method}. The standard way to exploit this technique is to consider an equivalent formulation of the problem in the phase-plane $(x,y)=(u,u')$. Accordingly, we introduce the system 
\begin{equation*}
\begin{cases}
\, x' = y, \\
\, y' = - \bigl{(} \lambda a^{+}(t) - \mu a^{-}(t) \bigr{)} g(x).
\end{cases}
\leqno{(\mathcal{S}_{\lambda,\mu})}
\end{equation*}
and we denote by
\begin{equation*}
(x(t;t_{0},x_{0},y_{0}), y(t;t_{0}, x_{0},y_{0}))
\end{equation*}
the (unique) solution to $(\mathcal{S}_{\lambda,\mu})$ satisfying the initial conditions 
\begin{equation}\label{eq-ic}
x(t_{0})=x_{0},\quad y(t_{0})=y_{0}.
\end{equation}
We stress that the solution $(x(\cdot;t_{0}, x_{0},y_{0}), y(\cdot;t_{0}, x_{0},y_{0}))$ is globally defined on $\mathopen{[}0,T\mathclose{]}$.

As consequence, for any given $\lambda$ and $\mu$, we can introduce the \textit{Poincar\'{e} map} $\Phi_{t_{0}}^{t_{1}}$ associated with system $(\mathcal{S}_{\lambda,\mu})$ in the interval $\mathopen{[}t_{0},t_{1}\mathclose{]}\subseteq\mathopen{[}0,T\mathclose{]}$, which is the global diffeomorphism of the plane onto itself defined by 
\begin{equation*}
\Phi_{t_{0}}^{t_{1}}\colon \mathbb{R}^{2} \to \mathbb{R}^{2},\quad \Phi_{t_{0}}^{t_{1}}(x_{0},y_{0}) := (x(t_{1};t_{0},x_{0},y_{0}), y(t_{1};t_{0},x_{0},y_{0})).
\end{equation*}
In this manner, a positive solution to $(\mathscr{P}_{\lambda,\mu})$ is determined by a point $P$ in $\mathopen{]}0,1\mathclose{[}\times\mathbb{R}$ for which there exist $P_{0},P_{T}\in\mathbb{R}^{2}$ such that $\Phi_{0}^{\kappa}(P_{0}) = P = \Phi_{T}^{\kappa}(P_{T})$, for some $\kappa\in\mathopen{]}0,T\mathclose{[}$, where
\begin{itemize}
\item either $P_{0}\in\{0\}\times\mathopen{[}0,+\infty\mathclose{[}$, if $u(0)=0$, or $P_{0}\in\mathopen{[}0,1\mathclose{]}\times\{0\}$, if $u'(0)=0$,
\item either $P_{T}\in\{0\}\times\mathopen{]}-\infty,0\mathclose{]}$, if $u(T)=0$, or $P_{T}\in\mathopen{[}0,1\mathclose{]}\times\{0\}$, if $u'(T)=0$.
\end{itemize}
Thus, it is convenient to introduce the following sets
\begin{equation*}
X_{\mathopen{[}0,1\mathclose{]}}:=\mathopen{[}0,1\mathclose{]}\times\{0\}, \quad
Y_{\geq0}:=\{0\}\times\mathopen{[}0,+\infty\mathclose{[},
\quad
Y_{\leq0}:=\{0\}\times\mathopen{]}-\infty,0\mathclose{]}.
\end{equation*}
So that, we look for intersections of two continua, which are obtained by shooting $\mathcal{A}_{0}$ forward in time over $\mathopen{[}0,\kappa\mathclose{]}$ and by shooting $\mathcal{A}_{T}$ backward in time over $\mathopen{[}\kappa,T\mathclose{]}$, where the sets $\mathcal{A}_{0},\mathcal{A}_{T}$ are chosen in $\{X_{\mathopen{[}0,1\mathclose{]}},Y_{\geq0},Y_{\leq0}\}$, depending on the boundary conditions at $0$ and at $T$. This is the crucial point to take in consideration in the proof of our main result (Theorem~\ref{th-main}) that we will present in the second part of this section.

\subsection{Phase-plane analysis}\label{section-2.1}

Our aim is to describe the deformations of the sets $X_{\mathopen{[}0,1\mathclose{]}}$, $Y_{\geq0}$ and $Y_{\leq0}$, through the action of the Poincar\'{e} map associated with system $(\mathcal{S}_{\lambda,\mu})$.

Preliminarily, in the following remarks, we show the presence of trapping regions and prohibited regions in the phase-plane for solutions to system~$(\mathcal{S}_{\lambda,\mu})$.

\begin{remark}\label{rem-2.1}
Let $t_{0}\in\mathopen{[}0,T\mathclose{[}$ and $y_{0}<0$. If $(x(t),y(t))$ is the solution to system~$(\mathcal{S}_{\lambda,\mu})$ with initial conditions $(x(t_{0}),y(t_{0}))=(0,y_{0})$, then 
\begin{equation}\label{eq1-rem2.1}
x(t;t_{0},0,y_{0})<0,\quad y(t;t_{0},0,y_{0})<0, \quad \text{for all } \, t\in \mathopen{]}t_{0},T\mathclose{]}.
\end{equation}
By contradiction, let $t^{*}\in\mathopen{]}t_{0},T\mathclose{]}$ be the first point such that $y(t^{*})=0$. By integrating $x'=y$, we deduce $x(t)<0$ for all $t\in\mathopen{]}t_{0},t^{*}\mathclose{]}$. Then, from the definition of $g(s)$ for $s<0$, we have $0=y(t^{*})=y_{0}<0$, a contradiction.

Analogously, the following facts hold. Let $t_{1}\in\mathopen{[}0,T\mathclose{[}$ and $y_{1}>0$. If $(x(t),y(t))$ is the solution to system~$(\mathcal{S}_{\lambda,\mu})$ with initial conditions $(x(t_{1}),y(t_{1}))=(1,y_{1})$, then 
\begin{equation}\label{eq2-rem2.1}
x(t;t_{1},0,y_{1})>1,\quad y(t;t_{1},0,y_{1})>0, \quad \text{for all } \, t\in \mathopen{]}t_{1},T\mathclose{]}.
\end{equation} 
Let $t_{0}\in\mathopen{]}0,T\mathclose{]}$ and $y_{0}>0$. If $(x(t),y(t))$ is the solution to system~$(\mathcal{S}_{\lambda,\mu})$ with initial conditions $(x(t_{0}),y(t_{0}))=(0,y_{0})$, then 
\begin{equation*}
x(t;t_{0},0,y_{0})<0,\quad y(t;t_{0},0,y_{0})>0, \quad \text{for all } \, t\in \mathopen{[}0,t_{0}\mathclose{[}.
\end{equation*}
Let $t_{1}\in\mathopen{]}0,T\mathclose{]}$ and $y_{1}<0$. If $(x(t),y(t))$ is the solution to system~$(\mathcal{S}_{\lambda,\mu})$ with initial conditions $(x(t_{1}),y(t_{1}))=(1,y_{1})$, then 
\begin{equation*}
x(t;t_{1},0,y_{1})>1,\quad y(t;t_{1},0,y_{1})<0, \quad \text{for all } \, t\in \mathopen{[}0,t_{1}\mathclose{[}.
\end{equation*} 

In this manner, we have pointed out the existence of four trapping regions.
$\hfill\lhd$
\end{remark}

\begin{remark}\label{rem-2.2}
Taking into account our hypotheses on the nonlinear term $g(s)$, we notice that there are some regions of the phase-plane that are never crossed by the image of the vertical strip $\mathopen{[}0,1\mathclose{]}\times\mathbb{R}$ through the Poincar\'{e} map. In more detail, we have that
\begin{equation*}
\Phi_{0}^{\eta}(\mathopen{[}0,1\mathclose{]}\times \mathbb{R})\cap E_{i} = \emptyset, \quad \text{for all $\eta\in\mathopen{[}0,T\mathclose{]}$}, \quad i=0,1,
\end{equation*}
where $E_{0}:=\mathopen{]}-\infty,0\mathclose{[}\times\mathopen{[}0,+\infty\mathclose{[}$ and $E_{1}:=\mathopen{]}1,+\infty\mathclose{[}\times\mathopen{]}-\infty,0\mathclose{]}$.

By contradiction, let us suppose that there exist $\eta\in\mathopen{[}0,T\mathclose{]}$ and initial conditions $(x_{0},y_{0})\in \mathopen{[}0,1\mathclose{]}\times \mathbb{R}$ with
\begin{equation*}
x(\eta;0,x_{0},y_{0})<0, \quad y(\eta;0,x_{0},y_{0})\geq0, \qquad \text{(for $i=0$)},
\end{equation*}
or
\begin{equation*}
x(\eta;0,x_{0},y_{0})>1, \quad y(\eta;0,x_{0},y_{0})\leq0, \qquad \text{(for $i=1$)}.
\end{equation*}
Then, there exists (a first) $t_{*}\in\mathopen{]}0,\eta\mathclose{[}$ such that 
\begin{equation*}
x(t_{*};0,x_{0},y_{0})=0, \quad y(t_{*};0,x_{0},y_{0})<0,
\end{equation*}
or, respectively, there exists (a first) $t^{*}\in\mathopen{]}0,\eta\mathclose{[}$ such that 
\begin{equation*}
x(t^{*};0,x_{0},y_{0})=1, \quad y(t^{*};0,x_{0},y_{0})>0.
\end{equation*}
This is in contradiction with \eqref{eq1-rem2.1} and \eqref{eq2-rem2.1}, respectively.

Analogously, it follows that
\begin{equation*}
\Phi_{T}^{\eta}(\mathopen{[}0,1\mathclose{]}\times \mathbb{R})\cap E'_{i} = \emptyset, \quad \text{for all $\eta\in\mathopen{[}0,T\mathclose{]}$}, \quad i=0,1,
\end{equation*}
where $E'_{0}:=\mathopen{]}-\infty,0\mathclose{[}\times\mathopen{]}-\infty,0\mathclose{]}$ and $E'_{1}:=\mathopen{]}1,+\infty\mathclose{[}\times\mathopen{[}0,+\infty\mathclose{[}$.
$\hfill\lhd$
\end{remark}

Now, let us fix an arbitrary
\begin{equation*}
\kappa\in\mathopen{]}\sigma,\tau\mathclose{[}
\end{equation*}
and study system $(\mathcal{S}_{\lambda,\mu})$ on the interval $\mathcal{[}0,\kappa\mathclose{]}$.
The following two propositions guarantee the existence of three sub-continua of $\Phi_{0}^{\kappa}(Y_{\geq0})$ and of $\Phi_{0}^{\kappa}(X_{\mathopen{[}0,1\mathclose{]}})$, respectively, which connect $\{0\}\times\mathopen{]}-\infty,0\mathclose{]}$ with $\{1\}\times\mathopen{]}0,+\infty\mathclose{[}$.
We stress that they are valid also for $\kappa=\tau$.

\begin{proposition}\label{prop-2.1}
There exists $\lambda^{\sharp}_{1}>0$ such that for each $\lambda>\lambda^{\sharp}_{1}$ the following holds. There exists $\mu^{\sharp}_{1}(\lambda)>0$ such that for every $\mu>\mu^{\sharp}_{1}(\lambda)$ there exist
\begin{equation*}
0=\beta_{0}<\beta_{1}<\beta_{2}<\beta_{3}<\beta_{4}<\beta_{5}
\end{equation*}
such that
\begin{equation*}
\Phi_{0}^{\kappa}(\{0\}\times\mathopen{]}\beta_{i},\beta_{i+1}\mathclose{[})\subseteq\mathopen{]}0,1\mathclose{[}\times\mathbb{R}, \quad \text{for $i=0,2,4$},
\end{equation*}
and
\begin{align*}
&\Phi_{0}^{\kappa}(0,\beta_{i})\in\{0\}\times\mathopen{]}-\infty,0\mathclose{]}, \quad \text{for $i=0,3,4$},\\
&\Phi_{0}^{\kappa}(0,\beta_{i})\in\{1\}\times\mathopen{]}0,+\infty\mathclose{[}, \quad \text{for $i=1,2,5$}.
\end{align*}
\end{proposition}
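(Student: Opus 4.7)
The plan is to decompose $\Phi_0^\kappa = \Phi_\sigma^\kappa \circ \Phi_0^\sigma$ and to follow the continuous curve $\beta \mapsto \Phi_0^\kappa(0,\beta)$ as $\beta$ sweeps through $\mathopen{[}0,+\infty\mathclose{[}$. The five parameters $\beta_1,\ldots,\beta_5$ will appear as the values at which this curve touches either $\{0\}\times\mathopen{]}-\infty,0\mathclose{]}$ or $\{1\}\times\mathopen{]}0,+\infty\mathclose{[}$, while the three open subintervals in the statement correspond to portions of the curve that remain strictly inside the strip $\mathopen{]}0,1\mathclose{[}\times\mathbb{R}$.

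First I would describe the intermediate curve $\gamma_1(\beta):=\Phi_0^\sigma(0,\beta)$ generated by the positive-hump dynamics. On $\mathopen{[}0,\sigma\mathclose{]}$ the vector field of $(\mathcal{S}_{\lambda,\mu})$ is concave (restoring) inside the strip and linear outside, and by Remarks~\ref{rem-2.1}--\ref{rem-2.2} trajectories can leave the strip only through $\{x=0,y<0\}$ or $\{x=1,y>0\}$. Using the Appendix analysis of the time-map $T(\beta)$ of the associated autonomous problem---which diverges as $\beta\to 0^+$ (by $(g_0)$) and as $\beta$ approaches the threshold $\beta^{**}$ above which the trajectory reaches $u=1$ (because $g(1)=0$), while its minimum on $(0,\beta^{**})$ is strictly less than $\sigma$ for $\lambda$ large---I would single out two values $\alpha_1<\alpha_2<\beta^{**}$ at which the phase-one trajectory exits through $\{x=0\}$ exactly at $t=\sigma$. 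This splits $\gamma_1$ into four successive arcs: a first strip-arc from $(0,0)$ to $(0,-\alpha_1)$; an excursion in $\{x<0,y<0\}$ on $\mathopen{[}\alpha_1,\alpha_2\mathclose{]}$; a second strip-arc from $(0,-\alpha_2)$ up to $(1,0)$; and a final arc in $\{x\geq 1,y\geq 0\}$ for $\beta\geq\beta^{**}$.

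Next I would apply $\Phi_\sigma^\kappa$, which is convex in the strip and linear outside. The two exterior arcs of $\gamma_1$ are transported linearly and stay in their trapping regions. For the two interior arcs I would invoke the Appendix's description of the negative-hump flow for $\mu$ large: initial data with sufficiently positive $Y_1$ reach $\{x=1\}$ rapidly, those with sufficiently negative $Y_1$ reach $\{x=0\}$, and in between there is a ``bouncing'' regime where the trajectory first descends to a positive minimum and then rises to $\{x=1\}$. Sweeping along the first strip-arc, the image $\gamma=\Phi_\sigma^\kappa\circ\gamma_1$ starts at $(0,0)$, is pushed up to $\{x=1,y>0\}$ at a first value $\beta_1$ (when the time to reach $x=1$ equals $\kappa-\sigma$), overshoots into $\{x>1,y>0\}$, returns to $\{x=1,y>0\}$ at $\beta_2$ (where the bouncing trajectory's arrival at $x=1$ synchronizes with $t=\kappa$), re-enters the strip, and touches $\{x=0,y<0\}$ at $\beta_3$. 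Sweeping along the second strip-arc---along which $\gamma$ deforms continuously from $\{x<0,y<0\}$ at $\beta=\alpha_2$ to $(1,0)$ at $\beta=\beta^{**}$---$\gamma$ crosses $\{x=0,y<0\}$ once at $\beta_4$ and then $\{x=1,y>0\}$ once at $\beta_5$. The claimed inclusions on the open subintervals follow from continuity together with the forbidden-region argument of Remark~\ref{rem-2.2}.

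The hard part will be the precise counting of crossings of the two boundary lines by $\gamma$ on each strip-arc of $\gamma_1$: one has to guarantee exactly two hits of $\{x=1,y>0\}$ followed by one hit of $\{x=0,y\leq 0\}$ on the first strip-arc, and exactly one hit of each type on the second strip-arc, so that the five parameters appear in the prescribed order. This requires the sharp asymptotic estimates on $\Phi_\sigma^\kappa$ for $\mu\to+\infty$ (controlling both the escape side and the escape time of the convex flow) that are the content of the technical lemmas deferred to the Appendix.
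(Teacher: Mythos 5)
Your skeleton does coincide with the paper's: factor $\Phi_0^\kappa=\Phi_\sigma^\kappa\circ\Phi_0^\sigma$, show that $\Phi_0^\sigma(Y_{\geq0})$ contains a first strip-arc running from the origin down to $\{0\}\times\mathopen{]}-\infty,0\mathclose{[}$ and a second one running from $\{0\}\times\mathopen{]}-\infty,0\mathclose{[}$ to $\{1\}\times\mathopen{]}0,+\infty\mathclose{[}$, then use the large-$\mu$ convex flow on $\mathopen{[}\sigma,\kappa\mathclose{]}$ to expel an interior point $p$ of the first arc through $\{1\}\times\mathopen{]}0,+\infty\mathclose{[}$, so that the three sub-continua are extracted by the intermediate value theorem from the anchor values $0$, $p$, $\beta^{*}$ (resp. $\delta_1$), $\delta_2$. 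However, your justification of the first-hump structure has a genuine gap: there is no ``time-map of the associated autonomous problem'' here, since $a^{+}$ is an arbitrary nonnegative $L^{1}$ weight on $\mathopen{[}0,\sigma\mathclose{]}$, and the Appendix contains no such analysis. The two endpoints of your heuristic do correspond to real lemmas (Lemma~\ref{lem-A1.1} for small shooting heights, Lemma~\ref{lem-A1.2} for large ones), but the decisive middle claim---that for $\lambda$ large some shot from $Y_{\geq0}$ returns to $x=0$ by time $\sigma$, which is precisely where $\lambda^{\sharp}_{1}$ enters---is not a consequence of anything you cite. The paper obtains it by applying Rabinowitz's Theorem~\ref{th-Rab} on $\mathopen{[}0,\sigma\mathclose{]}$ to show that the set $\mathscr{B}$ of heights whose trajectories stay in the strip is a proper subset of $\mathopen{]}0,+\infty\mathclose{[}$, and then setting $\beta^{*}=\sup\{\beta:\mathopen{]}0,\beta\mathclose{]}\subseteq\mathscr{B}\}$ and using concavity to get $x(\sigma;0,0,\beta^{*})=0$, $y(\sigma)<0$. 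Without this input your $\alpha_1$ need not exist.

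Two further points. Your closing worry about counting \emph{exact} numbers of crossings is misplaced: the proposition only asks for three pairwise disjoint sub-intervals with prescribed endpoint behavior, and these follow from a first-exit/last-entrance extraction between consecutive anchor points whose images at time $\kappa$ lie on opposite sides of the strip (using Remark~\ref{rem-2.1} to keep the exterior anchors trapped); no asymptotics beyond Lemmas~\ref{lem-A3.1}--\ref{lem-A3.2} are required, so the ``hard part'' you defer is not actually needed. Finally, your assertion that the second strip-arc of $\Phi_0^\sigma(Y_{\geq0})$ terminates exactly at $(1,0)$ is both unjustified and counterproductive: since $g(1)=0$, the point $(1,0)$ is an equilibrium, hence fixed by $\Phi_\sigma^\kappa$, and would not land in $\{1\}\times\mathopen{]}0,+\infty\mathclose{[}$. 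The paper instead produces an endpoint $\delta_2$ whose image is already on $\{1\}\times\mathopen{]}0,+\infty\mathclose{[}$ at time $\sigma$, which the trapping region then carries into $\mathopen{]}1,+\infty\mathclose{[}\times\mathopen{]}0,+\infty\mathclose{[}$ at time $\kappa$.
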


\begin{proof}
We divide the proof into two steps. In the first one we focus on the interval $\mathopen{[}0,\sigma\mathclose{]}$ and in the second one on $\mathopen{[}\sigma,\kappa\mathclose{]}$.

\smallskip
\noindent
\textit{Step~1. Dynamics on $\mathopen{[}0,\sigma\mathclose{]}$. }
In the same spirit of \cite{GaHaZa-03, GaHaZa-04}, let us consider the set
\begin{equation*}
\mathscr{B} := \bigl{\{} \beta >0 \colon 0<x(t;0,0,\beta)<1, \, \forall \, t\in\mathopen{]}0,\sigma\mathclose{]}\bigr{\}} \subseteq \mathopen{]}0,+\infty\mathclose{[}.
\end{equation*}
By an application of Rabinowitz's Theorem~\ref{th-Rab}  on the interval $\mathopen{[}0,\sigma\mathclose{]}$, there exists $\lambda^{\sharp}_{1}>0$ such that $\mathscr{B}\neq\mathopen{]}0,+\infty\mathclose{[}$ for every $\lambda>\lambda^{\sharp}_{1}$. By Lemma~\ref{lem-A1.1}, taking $\beta>0$ sufficiently small, we obtain $\beta\in\mathscr{B}$ and, moreover, $x(t;0,0,\beta)\in\mathopen{]}0,1\mathclose{[}$ and $y(t;0,0,\beta)>0$, for all $t\in\mathopen{]}0,\sigma\mathclose{]}$.
Let $\beta^{*}:=\sup\{\beta \colon \mathopen{]}0,\beta\mathclose{]} \subseteq\mathscr{B} \}$.
We remark that $x(\sigma;0,0,\beta^{*})=0$. Indeed, if by contradiction $x(\sigma;0,0,\beta^{*})>0$, then by continuity there exists $\beta'>\beta^{*}$ such that $x(\sigma;0,0,\beta')>0$. Hence, from the concavity, we have $x(t;0,0,\beta')>0$ for all $t\in\mathopen{]}0,\sigma\mathclose{]}$, a contradiction with the definition of supremum. Then, we straightforward deduce $y(\sigma;0,0,\beta^{*})<0$. Consequently, we have that $\Phi_{0}^{\sigma}(0,\beta^{*}) \in\{0\}\times\mathopen{]}-\infty,0\mathclose{[}$ and $\Phi_{0}^{\sigma}(\{0\}\times\mathopen{]}0,\beta^{*}\mathclose{[}) \subseteq \mathopen{]}0,1\mathclose{[}\times\mathbb{R}$.

At this point, by Lemma~\ref{lem-A1.2} there exits $\delta^{*}>\beta^{*}$ such that $\Phi^{\sigma}_{0}(0,\delta^{*})\in\mathopen{]}1,+\infty\mathclose{[}\times\mathopen{]}0,+\infty\mathclose{[}$.
Then, recalling that $\Phi_{0}^{\sigma}(0,\beta^{*}) \in\{0\}\times\mathopen{]}-\infty,0\mathclose{[}$, from the continuous dependence of the solutions upon the initial data and the intermediate value theorem, via Remark~\ref{rem-2.2}, the following fact holds. 
There exists an interval $\mathopen{[}\delta_{1},\delta_{2}\mathclose{]}\subseteq\mathopen{[}\beta^{*},\delta^{*}\mathclose{]}$ such that $\Phi^{\sigma}_{0}(\{0\}\times\mathopen{]}\delta_{1},\delta_{2}\mathclose{[})\subseteq\mathopen{]}0,1\mathclose{[}\times\mathbb{R}$, $\Phi_{0}^{\sigma}(\delta_{1},0) \in \{0\}\times\mathopen{]}-\infty,0\mathclose{[}$
and $\Phi_{0}^{\sigma}(\delta_{2},0) \in \{1\}\times\mathopen{]}0,+\infty\mathclose{[}$.

\smallskip
\noindent
\textit{Step~2. Dynamics on $\mathopen{[}\sigma,\kappa\mathclose{]}$. }
We fix $\lambda > \lambda^{\sharp}_{1}$.
First of all, we observe that, for any $y_{0}\in\mathbb{R}^{+}$, the solution $(x(t),y(t))$ to system~$(\mathcal{S}_{\lambda,\mu})$ with initial values $(x(0),y(0))=(0,y_{0})$ satisfies
\begin{equation*}
y(\sigma) = y(0) - \lambda \int_{0}^{\sigma} a^{+}(\xi)g(x(\xi)) \,d\xi \geq -\lambda^{*}_{1} \|a^{+}\|_{L^{1}(0,\sigma)} \max_{s\in\mathopen{[}0,1\mathclose{]}}g(s) =: -\omega_{\sigma}.
\end{equation*}
Let us take $p\in\mathopen{]}0,\beta^{*}\mathclose{[}$ and define $\nu_{\sigma}:=x(\sigma;0,0,p)$.
From the properties of the continua $\Phi_{0}^{\sigma}(\mathopen{[}0,\beta^{*}\mathclose{]}\times\{0\})$ achieved in \textit{Step~1}, it follows that $\nu_{\sigma}\in\mathopen{]}0,1\mathclose{[}$.
Next, by fixing $\nu_{2}\in\mathopen{]}\nu_{\sigma},1\mathclose{[}$, we choose $t_{2}$ such that
\begin{equation*}
\sigma< t_{2} \leq \min\biggl{\{}\sigma + \dfrac{\nu_{\sigma}}{2\omega_{\sigma}},
\dfrac{\sigma(1-\nu_{2})+\kappa(\nu_{2}-\nu_{\sigma})}{1-\nu_{\sigma}} 
\biggr{\}}
\end{equation*}
and $\omega>(1-\nu_{2})/(\kappa-t_{2})$.
We are now in position to apply Lemma~\ref{lem-A3.1} and Lemma~\ref{lem-A3.2}. Hence, there exists $\mu^{\sharp}_{1}(\lambda):=\mu^{\star}(\nu_{2},\nu_{\sigma},t_{2},\omega_{\sigma})>0$ (cf.~\eqref{eq-mu}) such that for $\mu>\mu^{\sharp}_{1}(\lambda)$ we have that 
\begin{equation}\label{eq-p}
x(\kappa;0,0,p)\geq1,\quad y(\kappa;0,0,p)>\omega>0.
\end{equation}
By Remark~\ref{rem-2.1}, we deduce that 
\begin{equation}\label{eq-r1}
\begin{aligned}
&x(\kappa;0,0,\beta^{*})<0, & &y(\kappa;0,0,\beta^{*})<0,
\\
&x(\kappa;0,0,\delta_{1})<0, & &y(\kappa;0,0,\delta_{1})<0,
\\
&x(\kappa;0,0,\delta_{2})>1, & &y(\kappa;0,0,\delta_{2})>0.
\end{aligned}
\end{equation}
Taking into account \eqref{eq-p}, \eqref{eq-r1} and $\Phi_{0}^{\kappa}(0,0)=(0,0)$, thanks to
the continuous dependence of the solutions upon the initial data and the intermediate value theorem, we conclude that there exist three intervals 
\begin{equation*}
\mathopen{[}\beta_{0},\beta_{1}\mathclose{]}\subseteq\mathopen{[}0,p\mathclose{]}, \quad
\mathopen{[}\beta_{2},\beta_{3}\mathclose{]}\subseteq\mathopen{[}p,\beta^{*}\mathclose{]}, \quad
\mathopen{[}\beta_{4},\beta_{5}\mathclose{]}\subseteq\mathopen{[}\delta_{1},\delta_{2}\mathclose{]},
\end{equation*}
such that
\begin{equation*}
\Phi_{0}^{\kappa}(\mathopen{]}\beta_{i},\beta_{i+1}\mathclose{[}\times\{0\})\subseteq\mathopen{]}0,1\mathclose{[}\times\mathbb{R}, \quad \text{for $i\in\{0,2,4\}$},
\end{equation*}
and
\begin{align*}
&\Phi_{0}^{\kappa}(\beta_{i},0)\in\{0\}\times\mathopen{]}-\infty,0\mathclose{]}, \quad \text{for $i\in\{0,3,4\}$},\\
&\Phi_{0}^{\kappa}(\beta_{i},0)\in\{1\}\times\mathopen{[}0,+\infty\mathclose{[}, \quad \text{for $i\in\{1,2,5\}$}.
\end{align*}
Then the thesis follows.
\end{proof}

\begin{proposition}\label{prop-2.2}
There exists $\lambda^{\sharp}_{2}>0$ such that for each $\lambda>\lambda^{\sharp}_{2}$ the following holds.
There exists $\mu^{\sharp}_{2}(\lambda)>0$ such that for every $\mu>\mu^{\sharp}_{2}(\lambda)$ there exist
\begin{equation*}
0=\alpha_{0}<\alpha_{1}<\alpha_{2}<\alpha_{3}<\alpha_{4}<\alpha_{5}<1
\end{equation*}
such that
\begin{equation*}
\Phi_{0}^{\kappa}(\mathopen{]}\alpha_{i},\alpha_{i+1}\mathclose{[}\times\{0\})\subseteq\mathopen{]}0,1\mathclose{[}\times\mathbb{R}, \quad \text{for $i\in\{0,2,4\}$},
\end{equation*}
and
\begin{align*}
&\Phi_{0}^{\kappa}(\alpha_{i},0)\in\{0\}\times\mathopen{]}-\infty,0\mathclose{]}, \quad \text{for $i\in\{0,3,4\}$},\\
&\Phi_{0}^{\kappa}(\alpha_{i},0)\in\{1\}\times\mathopen{]}0,+\infty\mathclose{[}, \quad \text{for $i\in\{1,2,5\}$}.
\end{align*}
\end{proposition}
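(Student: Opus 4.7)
My approach mirrors the proof of Proposition~\ref{prop-2.1}, now sweeping $\alpha$ along the segment $[0,1]\times\{0\}$ rather than $\beta$ along $\{0\}\times[0,+\infty)$. The structural observation driving everything is a monotonicity: for $\alpha\in(0,1)$, the solution of $(\mathcal{S}_{\lambda,\mu})$ with $(x(0),y(0))=(\alpha,0)$ satisfies $y(0)=0$ and $y'=-\lambda a^{+}(t)g(x)\le 0$ on $[0,\sigma]$, so $y\le 0$ and $x$ is nonincreasing there. Consequently $x(t;0,\alpha,0)\le\alpha<1$ on $[0,\sigma]$, and during this first phase the orbit can only leave the strip $[0,1]\times\mathbb{R}$ through $\{0\}\times(-\infty,0)$, never through $\{1\}\times\mathbb{R}$. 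Hence no analogue of Lemma~\ref{lem-A1.2} is required; in its place I exploit the second equilibrium $(1,0)$, since $g(1)=0$ and continuous dependence on initial data force $x(\sigma;0,\alpha,0)\to 1$ as $\alpha\to 1^{-}$, producing a second threshold symmetric to the role of $\beta^{*}$ in Proposition~\ref{prop-2.1}.

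\emph{Step~1 (dynamics on $[0,\sigma]$).} I set $\mathscr{A}:=\{\alpha\in(0,1):0<x(t;0,\alpha,0)<1,\,\forall t\in(0,\sigma]\}$ and $F(\alpha):=x(\sigma;0,\alpha,0)$. Lemma~\ref{lem-A1.1} (applied to initial data on the $x$-axis close to $0$) together with continuous dependence near the equilibrium $(1,0)$ gives $(0,\varepsilon)\cup(1-\varepsilon,1)\subseteq\mathscr{A}$ for some $\varepsilon>0$; equivalently, $F(0^{+})=0$ and $F(1^{-})=1$. For $\lambda$ sufficiently large---via Rabinowitz's Theorem~\ref{th-Rab} applied on $[0,\sigma]$, or by a direct quadrature estimate exploiting the largeness of $\lambda$---the function $F$ attains strictly negative values on $(0,1)$, so $\mathscr{A}\neq(0,1)$. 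Setting
\begin{equation*}
\alpha^{*}:=\sup\{\alpha:(0,\alpha]\subseteq\mathscr{A}\},\qquad \alpha^{**}:=\inf\{\alpha:[\alpha,1)\subseteq\mathscr{A}\},
\end{equation*}
the strict negativity of $F$ somewhere on $(0,1)$ yields $0<\alpha^{*}<\alpha^{**}<1$, and the supremum argument used for $\beta^{*}$ in Proposition~\ref{prop-2.1} transfers verbatim to give $\Phi_{0}^{\sigma}(\alpha^{*},0),\Phi_{0}^{\sigma}(\alpha^{**},0)\in\{0\}\times(-\infty,0)$. For $\alpha\in(\alpha^{*},\alpha^{**})$ the orbit reaches $x=0$ strictly before $\sigma$ with $y<0$ and, since $g\equiv 0$ on $(-\infty,0)$, evolves linearly afterwards, so $\Phi_{0}^{\sigma}(\alpha,0)\in(-\infty,0)\times(-\infty,0)$.

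\emph{Step~2 (dynamics on $[\sigma,\kappa]$ and intermediate value argument).} I choose $p_{1}\in(0,\alpha^{*})$ and $p_{2}\in(\alpha^{**},1)$ with $\nu_{\sigma,j}:=F(p_{j})\in(0,1)$, and use the lower bound $y(\sigma;0,p_{j},0)\ge -\lambda\|a^{+}\|_{L^{1}(0,\sigma)}\max_{[0,1]}g=:-\omega_{\sigma}$. Applying Lemmas~\ref{lem-A3.1} and~\ref{lem-A3.2} exactly as in the second step of Proposition~\ref{prop-2.1} produces $\mu^{\sharp}_{2}(\lambda)>0$ such that, for $\mu>\mu^{\sharp}_{2}(\lambda)$, $x(\kappa;0,p_{j},0)\ge 1$ and $y(\kappa;0,p_{j},0)>\omega>0$ for $j=1,2$. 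Remark~\ref{rem-2.1}---combined with the linear evolution on $\{x<0\}$---propagates the image forward to $\Phi_{0}^{\kappa}(\alpha,0)\in(-\infty,0)\times(-\infty,0)$ for every $\alpha\in[\alpha^{*},\alpha^{**}]$. Thus as $\alpha$ traverses $[0,p_{2}]$ the continuous curve $\alpha\mapsto\Phi_{0}^{\kappa}(\alpha,0)$ visits, successively, the origin, the region $[1,+\infty)\times(\omega,+\infty)$ at $\alpha=p_{1}$, the region $(-\infty,0)\times(-\infty,0)$ on $[\alpha^{*},\alpha^{**}]$, and $[1,+\infty)\times(\omega,+\infty)$ again at $\alpha=p_{2}$. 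The intermediate value theorem combined with Remark~\ref{rem-2.2} (which forbids entry into $E_{0}$ or $E_{1}$) then extracts intervals $[\alpha_{0},\alpha_{1}]\subseteq[0,p_{1}]$, $[\alpha_{2},\alpha_{3}]\subseteq[p_{1},\alpha^{*}]$ and $[\alpha_{4},\alpha_{5}]\subseteq[\alpha^{**},p_{2}]$ with the required images.

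\emph{Main obstacle.} The key novelty over Proposition~\ref{prop-2.1} is the identification of the second threshold $\alpha^{**}<1$, responsible for the third continuum $[\alpha_{4},\alpha_{5}]$; in the sweep of $\beta$ there is no analogous ``return'' to $\{0\}\times(-\infty,0)$ without invoking Lemma~\ref{lem-A1.2}. This rests on two non-trivial ingredients: the continuous-dependence re-entry $F(\alpha)\to 1$ as $\alpha\to 1^{-}$ (ultimately a consequence of $g(1)=0$), and the strict negativity of $F$ somewhere on $(0,1)$ for $\lambda$ large enough (which guarantees $\alpha^{*}<\alpha^{**}$). Once both are in hand, the rest of the argument is a faithful reenactment of Proposition~\ref{prop-2.1}.
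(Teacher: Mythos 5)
Your argument is correct and follows essentially the same route as the paper: your thresholds $\alpha^{*}$ and $\alpha^{**}$ are the paper's $p_{2}$ and $p_{1}$, obtained there from the quadrature Lemmas~\ref{lem-A1.3}--\ref{lem-A1.4} (which force $F$ negative for $\lambda$ large), the equilibrium $(1,0)$, and Lemma~\ref{lem-A1.5}, and your second step is the same transplant of Step~2 of Proposition~\ref{prop-2.1}. Two slips that do not affect validity: the lemma controlling initial data $(\nu,0)$ with $\nu$ small is Lemma~\ref{lem-A1.5}, not Lemma~\ref{lem-A1.1} (and Rabinowitz's theorem, which concerns orbits shot from the $y$-axis, is not the right tool for showing $F<0$ somewhere --- your ``direct quadrature'' alternative is what is actually needed); and the assertion that \emph{every} $\alpha\in\mathopen{]}\alpha^{*},\alpha^{**}\mathclose{[}$ exits through $x=0$ is unjustified, since $\mathscr{A}$ need not be an interval, but it is also unused, as only the images of $\alpha^{*}$ and $\alpha^{**}$ enter your intermediate value argument.
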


\begin{proof}
We divide the proof into two steps. In the first one we focus on the interval $\mathopen{[}0,\sigma\mathclose{]}$ and in the second one on $\mathopen{[}\sigma,\kappa\mathclose{]}$.

\smallskip
\noindent
\textit{Step~1. Dynamics on $\mathopen{[}0,\sigma\mathclose{]}$. }
Let us fix $0<\nu_{1}<\nu_{0}<1$ and $0<t_{1} \leq \sigma (1-\nu_{1}/\nu_{0})$. By Lemma~\ref{lem-A1.3} and Lemma~\ref{lem-A1.4}, there exists $\lambda^{\sharp}_{2}:=\lambda^{\star}(\nu_{0},\nu_{1},t_{1})$ (cf.~\eqref{eq-lambda1}) such that, for $\lambda > \lambda^{\sharp}_{2}$, we obtain
\begin{equation*}
x(\sigma;0,\nu_{0},0)\leq0, \quad y(\sigma;0,\nu_{0},0)<0.
\end{equation*}
Next, by the concavity of $x(t)$ in $\mathopen{[}0,\sigma\mathclose{]}$, we notice that $\Phi^{\sigma}_{0}(\mathopen{[}0,1\mathclose{]}\times\{0\})\subseteq\mathopen{]}-\infty,1\mathclose{]}\times\mathopen{]}-\infty,0\mathclose{]}$ and moreover $\Phi_{0}^{\sigma}(1,0)=(1,0)$.
Thus, from the continuous dependence of the solutions upon the initial data and the intermediate value theorem, the following fact holds. 
There exists an interval $\mathopen{[}p_{1},1\mathclose{]}\subseteq\mathopen{[}\nu_{0},1\mathclose{]}$ such that $\Phi^{\sigma}_{0}(\mathopen{]}p_{1},1\mathclose{[}\times\{0\})\subseteq\mathopen{]}0,1\mathclose{[}\times\mathopen{]}-\infty,0\mathclose{[}$ and $\Phi_{0}^{\sigma}(p_{1},0) \in \{0\}\times\mathopen{]}-\infty,0\mathclose{[}$.

Furthermore, by Lemma~\ref{lem-A1.5} there exits $\nu_{2}\in\mathopen{]}0,\nu_{1}\mathclose{[}$ such that $\Phi^{\sigma}_{0}(\mathopen{]}0,\nu_{2}\mathclose{]}\times\{0\})\subseteq\mathopen{]}0,1\mathclose{[}\times\mathopen{]}-\infty,0\mathclose{]}$. Then, recalling that $\Phi_{0}^{\sigma}(\nu_{0},0) \in \mathopen{]}-\infty,0\mathclose{]}\times\mathopen{]}-\infty,0\mathclose{[}$, from the same previous arguments of continuity, there exists an interval $\mathopen{[}0,p_{2}\mathclose{]}\subseteq\mathopen{[}0,\nu_{0}\mathclose{]}$ (with $p_{2}>\nu_{2}$) such that $\Phi^{\sigma}_{0}(\mathopen{]}0,p_{2}\mathclose{[}\times\{0\})\subseteq\mathopen{]}0,1\mathclose{[}\times\mathopen{]}-\infty,0\mathclose{]}$ and $\Phi_{0}^{\sigma}(p_{2},0) \in \{0\}\times\mathopen{]}-\infty,0\mathclose{[}$.

\smallskip
\noindent
\textit{Step~2. Dynamics on $\mathopen{[}\sigma,\kappa\mathclose{]}$. }
Let us fix $\lambda > \lambda^{\sharp}_{2}$.
By same arguments used in the corresponding step of the proof of Proposition~\ref{prop-2.1}, we get the conclusion of the theorem. More precisely, we obtain the existence of $\mu^{\sharp}_{2}(\lambda)>0$ and $0=\alpha_{0}<\alpha_{1}<\alpha_{2}<\alpha_{3}<\alpha_{4}<\alpha_{5}<1$, with
$\mathopen{[}\alpha_{0},\alpha_{1}\mathclose{]} \cup \mathopen{[}\alpha_{2},\alpha_{3}\mathclose{]} \subseteq\mathopen{[}0,p_{2}\mathclose{]}$, $\mathopen{[}\alpha_{4},\alpha_{5}\mathclose{]}\subseteq\mathopen{[}p_{2},1\mathclose{]}$,
satisfying the properties in the statement.
\end{proof}

We conclude this section by stating the propositions concerning the interval
$\mathopen{[}\kappa,T\mathclose{]}$, where the properties of $\Phi_{T}^{\kappa}(Y_{\leq0})$ and $\Phi_{T}^{\kappa}(X_{\mathopen{[}0,1\mathclose{]}})$ are described.
We stress that they are valid also for $\kappa=\sigma$.
The proofs are analogous to the ones of Proposition~\ref{prop-2.1} and Proposition~\ref{prop-2.2}.

\begin{proposition}\label{prop-2.3}
There exists $\lambda^{\sharp}_{3}>0$ such that for each $\lambda>\lambda^{\sharp}_{3}$ the following holds. There exists $\mu^{\sharp}_{3}(\lambda)>0$ such that for every $\mu>\mu^{\sharp}_{3}(\lambda)$ there exist
\begin{equation*}
0=\beta'_{0}>\beta'_{1}>\beta'_{2}>\beta'_{3}>\beta'_{4}>\beta'_{5}
\end{equation*}
such that
\begin{equation*}
\Phi_{T}^{\kappa}(\{0\}\times\mathopen{]}\beta'_{i+1},\beta'_{i}\mathclose{[})\subseteq\mathopen{]}0,1\mathclose{[}\times\mathbb{R}, \quad \text{for $i\in\{0,2,4\}$},
\end{equation*}
and
\begin{align*}
&\Phi_{T}^{\kappa}(0,\beta'_{i})\in\{0\}\times\mathopen{[}0,+\infty\mathclose{[}, \quad \text{for $i\in\{0,3,4\}$},\\
&\Phi_{T}^{\kappa}(0,\beta'_{i})\in\{1\}\times\mathopen{]}-\infty,0\mathclose{[}, \quad \text{for $i\in\{1,2,5\}$}.
\end{align*}
\end{proposition}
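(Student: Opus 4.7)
The plan is to reduce Proposition~\ref{prop-2.3} to Proposition~\ref{prop-2.1} by a time-reversal argument, exploiting the invariance of the second-order equation under $t\mapsto T-t$. Given a trajectory $(x(t),y(t))$ of $(\mathcal{S}_{\lambda,\mu})$, set $s:=T-t$ and $(u(s),v(s)):=\bigl(x(T-s),-y(T-s)\bigr)$. A direct computation shows that $(u,v)$ satisfies the system
\begin{equation*}
u'=v,\qquad v'=-\bigl(\lambda\tilde{a}^{+}(s)-\mu\tilde{a}^{-}(s)\bigr)g(u),
\end{equation*}
with reflected weight $\tilde{a}(s):=a(T-s)$, which still satisfies $(a_{*})$ with break points $\tilde{\sigma}:=T-\tau$ and $\tilde{\tau}:=T-\sigma$. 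Moreover $\tilde{\kappa}:=T-\kappa\in\mathopen{]}\tilde{\sigma},\tilde{\tau}\mathclose{[}$, because $\kappa\in\mathopen{]}\sigma,\tau\mathclose{[}$. An initial condition $(x(T),y(T))=(0,y_{0})$ with $y_{0}\leq0$ becomes $(u(0),v(0))=(0,-y_{0})$ with $-y_{0}\geq0$, i.e.\ an element of $Y_{\geq0}$ for the reversed problem.

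Denote by $\tilde{\Phi}_{0}^{\tilde{\kappa}}$ the Poincar\'e map of the reversed system. The construction gives the correspondence
\begin{equation*}
\Phi_{T}^{\kappa}(0,-\beta)=\bigl(u(\tilde{\kappa}),-v(\tilde{\kappa})\bigr) \quad \text{whenever} \quad \tilde{\Phi}_{0}^{\tilde{\kappa}}(0,\beta)=\bigl(u(\tilde{\kappa}),v(\tilde{\kappa})\bigr),\ \beta\geq0.
\end{equation*}
Therefore a point $\tilde{\Phi}_{0}^{\tilde{\kappa}}(0,\beta)\in\{0\}\times\mathopen{]}-\infty,0\mathclose{]}$ corresponds to $\Phi_{T}^{\kappa}(0,-\beta)\in\{0\}\times\mathopen{[}0,+\infty\mathclose{[}$, a point in $\{1\}\times\mathopen{]}0,+\infty\mathclose{[}$ corresponds to $\Phi_{T}^{\kappa}(0,-\beta)\in\{1\}\times\mathopen{]}-\infty,0\mathclose{[}$, and the condition ``first coordinate in $\mathopen{]}0,1\mathclose{[}$'' is preserved since the first coordinate of the orbit is unchanged under the reversal.

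Applying Proposition~\ref{prop-2.1} to the reversed system on $\mathopen{[}0,\tilde{\kappa}\mathclose{]}$ yields constants $\tilde{\lambda}^{\sharp},\tilde{\mu}^{\sharp}(\lambda)>0$ and $0=\beta_{0}<\beta_{1}<\cdots<\beta_{5}$ satisfying the conclusions stated there for $\tilde{\Phi}_{0}^{\tilde{\kappa}}$. Setting $\lambda_{3}^{\sharp}:=\tilde{\lambda}^{\sharp}$, $\mu_{3}^{\sharp}(\lambda):=\tilde{\mu}^{\sharp}(\lambda)$ and $\beta'_{i}:=-\beta_{i}$ produces the decreasing chain $0=\beta'_{0}>\beta'_{1}>\cdots>\beta'_{5}$ and, via the correspondence above, delivers the three listed properties of $\Phi_{T}^{\kappa}$ (note that the interval $\mathopen{]}\beta_{i},\beta_{i+1}\mathclose{[}$ becomes $\mathopen{]}\beta'_{i+1},\beta'_{i}\mathclose{[}$ under $\beta\mapsto-\beta$). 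The only point that needs checking, and is not expected to be hard, is that the Appendix lemmas invoked inside Proposition~\ref{prop-2.1} depend solely on the sign structure of the weight on each subinterval and on the assumptions on $g$, both of which are manifestly invariant under $t\mapsto T-s$; once this is verified, Proposition~\ref{prop-2.1} applies verbatim to the reversed problem and the proof is complete. (Alternatively, one could simply mimic the two-step proof of Proposition~\ref{prop-2.1}, carrying out Step~1 on $\mathopen{[}\tau,T\mathclose{]}$ where $a\succ0$ and Step~2 on $\mathopen{[}\kappa,\tau\mathclose{]}$ where $a\prec0$, but this requires duplicating all of the continuity and intermediate-value arguments and offers no new insight.)
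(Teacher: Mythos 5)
Your proof is correct. The paper itself gives no separate argument for Proposition~\ref{prop-2.3}: it merely states that the proof is ``analogous'' to that of Proposition~\ref{prop-2.1}, to be carried out backward in time on $\mathopen{[}\kappa,T\mathclose{]}$ using the mirrored Appendix lemmas (Section~A.2 and Lemmas~A.3.3--A.3.4), whose proofs are likewise omitted by symmetry. What you do differently is make that symmetry explicit: the change of variables $s=T-t$, $(u,v)=(x(T-s),-y(T-s))$ turns the problem into one of the exact same form with reflected weight $\tilde a(s)=a(T-s)$, which satisfies $(a_{*})$ with $\tilde\sigma=T-\tau$, $\tilde\tau=T-\sigma$, and $\tilde\kappa=T-\kappa\in\mathopen{]}\tilde\sigma,\tilde\tau\mathclose{[}$; then Proposition~\ref{prop-2.1} applies as a black box and the sign flips $v=-y$, $\beta'_i=-\beta_i$ translate its conclusions exactly into those of Proposition~\ref{prop-2.3}. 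This buys economy and rigor (no duplication of the continuity and intermediate-value arguments, and it simultaneously justifies the unproved lemmas of Appendix~A.2), at the cost of nothing. One small remark: your closing caveat about having to re-check the Appendix lemmas under reflection is unnecessary — since Proposition~\ref{prop-2.1} is a stated result valid for any $L^{1}$ weight satisfying $(a_{*})$ and any $g$ with $(g_{*})$, $(g_{0})$, and the reflected data satisfy exactly these hypotheses (with the constants now depending on $\tilde\kappa$), no inspection of its proof is needed.
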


\begin{proposition}\label{prop-2.4}
There exists $\lambda^{\sharp}_{4}>0$ such that for each $\lambda>\lambda^{\sharp}_{4}$ the following holds. There exists $\mu^{\sharp}_{4}(\lambda)>0$ such that for every $\mu>\mu^{\sharp}_{4}(\lambda)$ there exist
\begin{equation*}
0=\alpha'_{0}<\alpha'_{1}<\alpha'_{2}<\alpha'_{3}<\alpha'_{4}<\alpha'_{5}<1
\end{equation*}
such that
\begin{equation*}
\Phi_{T}^{\kappa}(\mathopen{]}\alpha'_{i},\alpha'_{i+1}\mathclose{[}\times\{0\})\subseteq\mathopen{]}0,1\mathclose{[}\times\mathbb{R}, \quad \text{for $i\in\{0,2,4\}$},
\end{equation*}
and
\begin{align*}
&\Phi_{T}^{\kappa}(\alpha'_{i},0)\in\{0\}\times\mathopen{[}0,+\infty\mathclose{[}, \quad \text{for $i\in\{0,3,4\}$}\\
&\Phi_{T}^{\kappa}(\alpha'_{i},0)\in\{1\}\times\mathopen{]}-\infty,0\mathclose{[}, \quad \text{for $i\in\{1,2,5\}$}.
\end{align*}
\end{proposition}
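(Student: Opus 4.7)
The plan is to reduce Proposition~\ref{prop-2.4} to Proposition~\ref{prop-2.2} by time reversal. I would introduce $s := T-t$ and set $\tilde{x}(s) := x(T-s)$, $\tilde{y}(s) := -y(T-s)$; a direct computation shows that $(\tilde{x},\tilde{y})$ satisfies the planar system $\tilde{x}' = \tilde{y}$, $\tilde{y}' = -\tilde{a}(s)\,g(\tilde{x})$, where $\tilde{a}(s) := a(T-s)$. By $(a_{*})$ the weight $\tilde{a}$ is $\succ 0$ on $\mathopen{[}0, T-\tau\mathclose{]}$, $\prec 0$ on $\mathopen{[}T-\tau, T-\sigma\mathclose{]}$ and $\succ 0$ on $\mathopen{[}T-\sigma, T\mathclose{]}$, so it satisfies the same sign-condition $(a_{*})$ with $\tilde{\sigma} := T-\tau$, $\tilde{\tau} := T-\sigma$, and $\tilde{\kappa} := T-\kappa$ lies in $\mathopen{]}\tilde{\sigma}, \tilde{\tau}\mathclose{[}$.

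Next, since initial data of the form $(x_{0},0) \in X_{\mathopen{[}0,1\mathclose{]}}$ are preserved (as $\tilde{y}(0) = -y(T) = 0$), and denoting by $\tilde{\Phi}_{0}^{\tilde{\kappa}}$ the Poincar\'e map of the reduced system on $\mathopen{[}0, \tilde{\kappa}\mathclose{]}$, one has the identity $\Phi_{T}^{\kappa}(x_{0},0) = R\bigl(\tilde{\Phi}_{0}^{\tilde{\kappa}}(x_{0},0)\bigr)$, where $R(x,y) := (x,-y)$ is vertical reflection. Proposition~\ref{prop-2.2} applied to the reduced problem supplies thresholds $\tilde{\lambda}^{\sharp}_{2}, \tilde{\mu}^{\sharp}_{2}(\lambda) > 0$ and a partition $0 = \alpha_{0} < \alpha_{1} < \cdots < \alpha_{5} < 1$ realizing the three membership and interior conditions of that proposition. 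Setting $\lambda^{\sharp}_{4} := \tilde{\lambda}^{\sharp}_{2}$, $\mu^{\sharp}_{4}(\lambda) := \tilde{\mu}^{\sharp}_{2}(\lambda)$ and $\alpha'_{i} := \alpha_{i}$, and observing that $R$ fixes $\mathopen{]}0,1\mathclose{[}\times\mathbb{R}$ while mapping $\{0\}\times\mathopen{]}-\infty,0\mathclose{]}$ onto $\{0\}\times\mathopen{[}0,+\infty\mathclose{[}$ and $\{1\}\times\mathopen{]}0,+\infty\mathclose{[}$ onto $\{1\}\times\mathopen{]}-\infty,0\mathclose{[}$, we recover verbatim the three conclusions in the statement. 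The sign flips in the $y$-components between Propositions~\ref{prop-2.2} and \ref{prop-2.4} correspond exactly to the action of $R$, as one would expect from reversing the direction of time.

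A fully direct alternative is to rerun the two-step scheme of Proposition~\ref{prop-2.2}, first on the positive hump $\mathopen{[}\tau, T\mathclose{]}$ (traversed backward from $T$, in place of the forward traversal of $\mathopen{[}0,\sigma\mathclose{]}$) and then on the negative interval $\mathopen{[}\kappa,\tau\mathclose{]}$ (traversed backward, in place of $\mathopen{[}\sigma,\kappa\mathclose{]}$), invoking Lemmas~\ref{lem-A1.3}, \ref{lem-A1.4}, \ref{lem-A1.5} and Lemmas~\ref{lem-A3.1}, \ref{lem-A3.2} with the obvious relabelings of endpoints. The only mildly delicate point in either route is checking that these appendix lemmas depend on $a^{\pm}$ solely through $L^{1}$-norms over the relevant subinterval together with the length of the interval, so that all estimates are invariant under $t \mapsto T-t$; this is essentially built into the proofs of the lemmas, but I expect it to be the main piece of bookkeeping to be verified when writing out the full argument. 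Apart from this, no new phase-plane analysis is needed.
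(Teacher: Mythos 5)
Your argument is correct. The paper itself gives no separate proof of Proposition~\ref{prop-2.4}: it simply declares the proof ``analogous'' to that of Proposition~\ref{prop-2.2}, meaning one reruns the two-step shooting scheme backward from $t=T$, using the backward-time lemmas of the Appendix (Lemmas~\ref{lem-A2.3}, \ref{lem-A2.4}, \ref{lem-A2.5} on $\mathopen{[}\tau,T\mathclose{]}$ and Lemmas~\ref{lem-A3.3}, \ref{lem-A3.4} on $\mathopen{[}\kappa,\tau\mathclose{]}$) — this is exactly your ``fully direct alternative'', except that you would relabel Lemmas~\ref{lem-A1.3}--\ref{lem-A1.5}, \ref{lem-A3.1}, \ref{lem-A3.2} rather than quote their already-stated mirror versions. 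Your primary route is genuinely different and arguably cleaner: the change of variables $s=T-t$, $(\tilde{x},\tilde{y})=(x(T-s),-y(T-s))$ turns the system into one of the same form with weight $\tilde{a}(s)=a(T-s)$, which satisfies $(a_{*})$ with $\tilde{\sigma}=T-\tau$, $\tilde{\tau}=T-\sigma$ and $\tilde{\kappa}=T-\kappa\in\mathopen{]}\tilde{\sigma},\tilde{\tau}\mathclose{[}$, and the identity $\Phi_{T}^{\kappa}(x_{0},0)=R\bigl(\tilde{\Phi}_{0}^{\tilde{\kappa}}(x_{0},0)\bigr)$ with $R(x,y)=(x,-y)$ transports the conclusions of Proposition~\ref{prop-2.2} verbatim into those of Proposition~\ref{prop-2.4} (the reflection $R$ exchanges $\{0\}\times\mathopen{]}-\infty,0\mathclose{]}$ with $\{0\}\times\mathopen{[}0,+\infty\mathclose{[}$ and $\{1\}\times\mathopen{]}0,+\infty\mathclose{[}$ with $\{1\}\times\mathopen{]}-\infty,0\mathclose{[}$, and fixes $\mathopen{]}0,1\mathclose{[}\times\mathbb{R}$). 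What this buys is that Proposition~\ref{prop-2.2} is applied as a black box to a new weight that satisfies its hypotheses outright, so the bookkeeping you flag at the end (invariance of the appendix estimates under $t\mapsto T-t$) is not actually needed on that route; it is only relevant if you follow the direct rerun, and there it is indeed the only point to check, since those lemmas use $a^{\pm}$ only through integrals over the relevant subintervals and the interval lengths.
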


\subsection{Main result}\label{section-2.2}

We are now in position to state and prove our main result. We remark that Theorem~\ref{th-main-intro} in the introduction follows as a straightforward corollary.

\begin{theorem}\label{th-main}
Let $g \colon \mathopen{[}0,1\mathclose{]} \to \mathbb{R}^{+}$ be a locally Lipschitz continuous function satisfying $(g_{*})$ and $(g_{0})$.
Let $a \colon \mathopen{[}0,T\mathclose{]} \to \mathbb{R}$ be an $L^{1}$-function satisfying $(a_{*})$. Then, there exists $\lambda^{*}>0$ such that for each $\lambda>\lambda^{*}$ there exists $\mu^{*}(\lambda)>0$ such that for every $\mu>\mu^{*}(\lambda)$ problem $(\mathscr{P}_{\lambda,\mu})$ has at least eight positive solutions.
\end{theorem}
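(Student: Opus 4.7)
The plan is to realize each positive solution of $(\mathscr{P}_{\lambda,\mu})$ as an intersection point of a forward-shot continuum with a backward-shot continuum in the phase-plane, exactly as outlined in Section~\ref{section-2.1}. Fix any $\kappa\in\mathopen{]}\sigma,\tau\mathclose{[}$ and set
$\lambda^{*} := \max\{\lambda^{\sharp}_{1},\lambda^{\sharp}_{2},\lambda^{\sharp}_{3},\lambda^{\sharp}_{4}\}$ and, for each $\lambda>\lambda^{*}$,
$\mu^{*}(\lambda) := \max\{\mu^{\sharp}_{1}(\lambda),\mu^{\sharp}_{2}(\lambda),\mu^{\sharp}_{3}(\lambda),\mu^{\sharp}_{4}(\lambda)\}$, so that Propositions~\ref{prop-2.1}--\ref{prop-2.4} are simultaneously available. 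For any of the four boundary conditions in~\eqref{BC}, we pick the appropriate shooting sets $\mathcal{A}_{0}\in\{Y_{\geq 0},X_{\mathopen{[}0,1\mathclose{]}}\}$ (according to the condition at $0$) and $\mathcal{A}_{T}\in\{Y_{\leq 0},X_{\mathopen{[}0,1\mathclose{]}}\}$ (according to the condition at $T$), and we look for points $P\in\mathopen{]}0,1\mathclose{[}\times\mathbb{R}$ belonging to $\Phi_{0}^{\kappa}(\mathcal{A}_{0})\cap\Phi_{T}^{\kappa}(\mathcal{A}_{T})$.

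For the forward shoot we apply Proposition~\ref{prop-2.1} if $\mathcal{A}_{0}=Y_{\geq 0}$ and Proposition~\ref{prop-2.2} if $\mathcal{A}_{0}=X_{\mathopen{[}0,1\mathclose{]}}$, extracting three parameter sub-intervals whose $\Phi_{0}^{\kappa}$-images are three pairwise disjoint sub-arcs $\gamma_{f}^{1},\gamma_{f}^{2},\gamma_{f}^{3}$ contained in $\mathopen{[}0,1\mathclose{]}\times\mathbb{R}$ with interior in $\mathopen{]}0,1\mathclose{[}\times\mathbb{R}$, each joining a point of $\{0\}\times\mathopen{]}-\infty,0\mathclose{]}$ to a point of $\{1\}\times\mathopen{]}0,+\infty\mathclose{[}$. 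Symmetrically, Proposition~\ref{prop-2.3} or Proposition~\ref{prop-2.4} yields three disjoint sub-arcs $\gamma_{b}^{1},\gamma_{b}^{2},\gamma_{b}^{3}$ of $\Phi_{T}^{\kappa}(\mathcal{A}_{T})$, each joining a point of $\{0\}\times\mathopen{[}0,+\infty\mathclose{[}$ to a point of $\{1\}\times\mathopen{]}-\infty,0\mathclose{[}$. Pairwise disjointness within each family is automatic, since $\Phi_{0}^{\kappa}$ and $\Phi_{T}^{\kappa}$ are diffeomorphisms.

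The decisive step is to show that every pair $(\gamma_{f}^{i},\gamma_{b}^{j})$, $i,j\in\{1,2,3\}$, must meet. In the closed strip $\mathopen{[}0,1\mathclose{]}\times\mathbb{R}$, the arc $\gamma_{f}^{i}$ connects the ``lower-left'' boundary region $\{0\}\times\mathopen{]}-\infty,0\mathclose{]}$ to the ``upper-right'' boundary region $\{1\}\times\mathopen{]}0,+\infty\mathclose{[}$, whereas $\gamma_{b}^{j}$ connects the ``upper-left'' to the ``lower-right''; completing $\gamma_{f}^{i}$ by two vertical downward rays from its endpoints produces a closed subset of $\mathopen{[}0,1\mathclose{]}\times\mathbb{R}$ whose complement has the two endpoints of $\gamma_{b}^{j}$ in different connected components, so by a Jordan-type argument $\gamma_{b}^{j}$ must cross $\gamma_{f}^{i}$. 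This yields $3\times 3 = 9$ distinct intersection points, distinctness coming from the disjointness within each family combined with the injectivity of the Poincar\'{e} maps. At most one of these nine points can coincide with the origin $(0,0)=\Phi_{0}^{\kappa}(0,0)=\Phi_{T}^{\kappa}(0,0)$, which corresponds to the trivial solution $u\equiv 0$ and can only arise from the pair of sub-arcs sharing the common endpoint $(0,0)$; by backward uniqueness of the ODE, no other forward or backward sub-arc touches $(0,0)$. Hence at least eight of the nine intersection points lie in $\mathopen{]}0,1\mathclose{[}\times\mathbb{R}$, and via the shooting correspondence each one produces a distinct positive solution of $(\mathscr{P}_{\lambda,\mu})$. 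The main obstacle is the planar crossing statement just sketched, together with the verification that a trajectory whose phase point at time $\kappa$ lies in $\mathopen{]}0,1\mathclose{[}\times\mathbb{R}$ really stays in $\mathopen{[}0,1\mathclose{]}$ throughout $\mathopen{[}0,T\mathclose{]}$; both points are controlled by the trapping-region information of Remarks~\ref{rem-2.1}--\ref{rem-2.2}, which rules out the trajectory leaving the strip through the ``wrong'' portion of its boundary.
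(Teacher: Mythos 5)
Your proposal follows essentially the same route as the paper: fix $\kappa\in\mathopen{]}\sigma,\tau\mathclose{[}$, invoke Propositions~\ref{prop-2.1}--\ref{prop-2.4} to obtain three forward and three backward sub-continua (choosing the pair of propositions according to the boundary condition, exactly as in the paper's four cases), intersect them by a planar connectivity argument, and use Remarks~\ref{rem-2.1}--\ref{rem-2.2} to keep the trajectories in the strip; your explicit $3\times 3 - 1 = 8$ count, with the origin discarded as the trivial intersection of the two sub-arcs emanating from $(0,0)$, is precisely the paper's ``standard connectivity argument'' spelled out. The only deviations are cosmetic: you take a single $\lambda^{*},\mu^{*}(\lambda)$ as the maximum over all four thresholds rather than the per-case maximum of two, and you make the Jordan-type crossing step more explicit than the paper does.
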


\begin{proof}
We deal separately with the four boundary conditions in $(\mathscr{P}_{\lambda,\mu})$.

\smallskip
\noindent
\textit{Case 1. Dirichlet boundary conditions: $u(0) = u(T) = 0$}.
Let $\kappa\in\mathopen{]}\sigma,\tau\mathclose{[}$. First of all, we notice that any point 
$P\in\Phi_{0}^{\kappa}(Y_{\geq 0})\cap\Phi_{T}^{\kappa}(Y_{\leq0})$ 
determines (univocally) a solution $(x(t;\kappa,P),y(t;\kappa,P))$ of system $(\mathcal{S}_{\lambda,\mu})$ satisfying Dirichlet boundary conditions $x(0;\kappa,P)=x(T;\kappa,P)=0$. Hence, $u(t):=x(t;\kappa,P)$ is a solution of problem $(\mathscr{P}_{\lambda,\mu})$.
Finally, our goal is to find eight distinct points $P_{i}\in\Phi_{0}^{\kappa}(Y_{\geq 0})\cap\Phi_{T}^{\kappa}(Y_{\leq0})$, for $i\in\{1,\ldots,8\}$, belonging to the vertical strip $\mathopen{]}0,1\mathclose{[}\times\mathbb{R}$. 

According to Proposition~\ref{prop-2.1} and Proposition~\ref{prop-2.3}, first we define
\begin{equation*}
\lambda^{*}:=\max \bigl{\{} \lambda^{\sharp}_{1}, \lambda^{\sharp}_{3} \bigr{\}}.
\end{equation*}
Next, we fix $\lambda>\lambda^{*}$ and define 
\begin{equation*}
\mu^{*}(\lambda):=\max \bigl{\{} \mu^{\sharp}_{1}(\lambda),  \mu^{\sharp}_{3}(\lambda) \bigr{\}}.
\end{equation*}
Let $\mu>\mu^{*}(\lambda)$.
The discussion performed in Section~\ref{section-2.1} lead to the following situation.
\begin{itemize}
\item Proposition~\ref{prop-2.1} ensures the existence of three pairwise disjoint sub-continua in $\Phi_{0}^{\kappa}(Y_{\geq 0})$ connecting $\{0\}\times\mathopen{]}-\infty,0\mathclose{]}$ with $\{1\}\times\mathopen{]}0,+\infty\mathclose{[}$.
\item Proposition~\ref{prop-2.3} ensures the existence of three pairwise disjoint sub-continua in $\Phi_{T}^{\kappa}(Y_{\leq0})$ connecting $\{0\}\times\mathopen{[}0,+\infty\mathclose{[}$ with $\{1\}\times\mathopen{]}-\infty,0\mathclose{[}$.
\end{itemize}
From a standard connectivity argument, we deduce the existence of eight distinct intersection points
\begin{equation*}
P_{i}\in\Phi_{0}^{\kappa}(\mathopen{]}\beta_{i},\beta_{i+1}\mathclose{[}\times\{0\})\cap\Phi_{T}^{\kappa}(\mathopen{]}\beta'_{i},\beta'_{i+1}\mathclose{[}\times\{0\}), \quad \text{for $i\in\{0,2,4\}$}.
\end{equation*} 
Moreover, by Remark~\ref{rem-2.1}, for each $i\in\{1,\dots,8\}$, we have that 
\begin{equation*}
\begin{aligned}
&\Phi_{0}^{t}(\xi,0) \in \mathopen{]}0,1\mathclose{[}\times\mathbb{R}, &&\text{for all } \, t\in\mathopen{[}0,\kappa\mathclose{]}, \; \xi\in\mathopen{]}\beta'_{i+1},\beta'_{i}\mathclose{[},
\\&\Phi_{T}^{t}(\xi,0) \in \mathopen{]}0,1\mathclose{[}\times\mathbb{R}, &&\text{for all } \, t\in\mathopen{[}\kappa,T\mathclose{]}, \; \xi\in\mathopen{]}\beta'_{i+1},\beta'_{i}\mathclose{[},
\end{aligned}
\end{equation*}
and so, $P_{i}\in\mathopen{]}0,1\mathclose{[}\times\mathbb{R}$. 
From the above remarks, for $i\in\{1,\ldots,8\}$, we obtain that $u_{i}(t):=x(t;\kappa,P_{i})$ is a solution to  $(\mathscr{P}_{\lambda,\mu})$ with $0 < u_{i}(t) < 1$ on $\mathopen{]}0,T\mathclose{[}$. Then, the thesis follows.

\smallskip
\noindent
\textit{Case 2. Neumann boundary conditions: $u'(0) = u'(T) = 0$}.
In this situation, we take
$\lambda > \lambda^{*}:=\max \bigl{\{} \lambda^{\sharp}_{2}, \lambda^{\sharp}_{4} \bigr{\}}$
and
$\mu > \mu^{*}(\lambda):=\max \bigl{\{} \mu^{\sharp}_{2}(\lambda),  \mu^{\sharp}_{4}(\lambda) \bigr{\}}.$
With these choices, the proof follows exactly the same scheme of \textit{Case 1}, by means of Proposition~\ref{prop-2.2} and Proposition~\ref{prop-2.4}.

\smallskip
\noindent
\textit{Case 3. Mixed boundary conditions, type~1:} $u(0) = u'(T) = 0$.
In this situation, we take
$\lambda > \lambda^{*}:=\max \bigl{\{} \lambda^{\sharp}_{1}, \lambda^{\sharp}_{4} \bigr{\}}$
and
$\mu > \mu^{*}(\lambda):=\max \bigl{\{} \mu^{\sharp}_{1}(\lambda),  \mu^{\sharp}_{4}(\lambda) \bigr{\}}.$
With these choices, the proof follows exactly the same scheme of \textit{Case 1}, by means of Proposition~\ref{prop-2.1} and Proposition~\ref{prop-2.4}.

\smallskip
\noindent
\textit{Case 4. Mixed boundary conditions, type~2:} $u'(0) = u(T) = 0$.
In this situation, we take
$\lambda > \lambda^{*}:=\max \bigl{\{} \lambda^{\sharp}_{2}, \lambda^{\sharp}_{3} \bigr{\}}$
and
$\mu > \mu^{*}(\lambda):=\max \bigl{\{} \mu^{\sharp}_{2}(\lambda),  \mu^{\sharp}_{3}(\lambda) \bigr{\}}).$
With these choices, the proof follows exactly the same scheme of \textit{Case 1}, by means of Proposition~\ref{prop-2.2} and Proposition~\ref{prop-2.3}.

\smallskip

\noindent All boundary conditions listed in \eqref{BC} for problem $(\mathscr{P}_{\lambda,\mu})$ have been considered, hence the proof is completed.
\end{proof}

\section{Discussion: numerical examples and future perspectives}\label{section-3}

In this section we firstly give a graphical description of the solutions to problem $(\mathscr{P}_{\lambda,\mu})$, which helps to understand the dynamics performed in Section~\ref{section-2}. Then, we provide bifurcations diagrams for our model, that bring to light the role played by the parameters $\lambda$ and $\mu$. The discussion ends with some remarks on weight functions $a(t)$ with more than two intervals of positivity.

\medskip

As an example, let us consider the one-dimensional region $\mathopen{[}0,T\mathclose{]}:=\mathopen{[}0,3\mathclose{]}$ and the following functions
\begin{equation}\label{example}
a(t):=\sin(\pi t), \quad t\in\mathopen{[}0,T\mathclose{]}, \quad \text{ and } \quad g(s):=s^{2}(1-s), \quad s\in\mathopen{[}0,1\mathclose{]}.
\end{equation}
We also take $\sigma=1$ and $\tau=2$. In this manner $a(t)$ has two positive humps on $\mathopen{[}0,1\mathclose{]}$ and $\mathopen{[}2,3\mathclose{]}$, separated by a negative one on $\mathopen{[}1,2\mathclose{]}$.
In this framework all the hypotheses for the applicability of Theorem~\ref{th-main} are satisfied.

In order to illustrate the results of the present paper, in this section we focus only on problem $(\mathscr{P}_{\lambda,\mu})$ with Neumann boundary conditions:
\begin{equation}\label{eq-ex}
\begin{cases}
\, u'' + \bigl{(}\lambda \sin^{+}(\pi t)-\mu \sin^{-}(\pi t)\bigr{)} u^{2}(1-u) = 0, \\
\, u'(0)=u'(3).
\end{cases}
\end{equation}
We stress that one could perform analogous considerations dealing with Dirichlet or mixed boundary conditions too in the same framework of \eqref{eq-ex}.

Via a numerical simulation, for $\lambda=20$ and $\mu=500$, we obtain eight positive solutions to the Neumann problem \eqref{eq-ex}, whose graphs are plotted in Figure~\ref{fig-1}. This outcome is in accordance with Theorem~\ref{th-main}. Furthermore, we also refer to Figure~\ref{fig-1} in order to understand the behavior of the solutions $u(t)$ in $\mathopen{[}0,T\mathclose{]}$. 
Firstly, by the sign-condition on $a(t)$, we remark that the solutions are concave in the intervals $\mathopen{[}0,1\mathclose{]}$ and $\mathopen{[}2,3\mathclose{]}$ where $a(t)\succ0$, while they are convex in the interval $\mathopen{[}1,2\mathclose{]}$ where $a(t)\prec0$.
Secondly, we point out some interesting properties of the solutions at $t=0$ and at $t=T$ arising from the discussion in Section~\ref{section-2}.
Indeed, we notice that at $t=0$ there are mainly the following three kinds of behavior: the solution is either ``very small'', ``small'' or ``large''/``near $1$'' (cf.~the graphs of the corresponding solutions in green, blue and red, shown in Figure~\ref{fig-1}). 
Accordingly, in Section~\ref{section-2} we have proved that $u(0)$ can belongs to three pairwise disjoint subintervals $\mathopen{[}\alpha_{i},\alpha_{i+1}\mathclose{]}$ of $X_{\mathopen{[}0,1\mathclose{]}}$, for $i\in\{0,2,4\}$, explaining the meaning of the above classification.
Lastly, we observe that a similar situation holds at $t=T$.

\begin{figure}[htb]
\centering
\begin{tikzpicture}[scale=1]
\begin{axis}[
  tick pos=left,
  tick label style={font=\scriptsize},
          scale only axis,
  enlargelimits=false,
  xtick={0,3},
  ytick={0,1},
  xlabel={\small $t$},
  ylabel={\small $u(t)$},
  max space between ticks=50,
                minor x tick num=2,
                minor y tick num=5,  
every axis x label/.style={
below,
at={(2.3cm,0cm)},
  yshift=-3pt
  },
every axis y label/.style={
below,
at={(0cm,1.9cm)},
  xshift=-3pt},
  y label style={rotate=90,anchor=south},
  width=4.6cm,
  height=3.8cm,  
  xmin=0,
  xmax=3,
  ymin=0,
  ymax=1]
\addplot graphics[xmin=0,xmax=3,ymin=0,ymax=1] {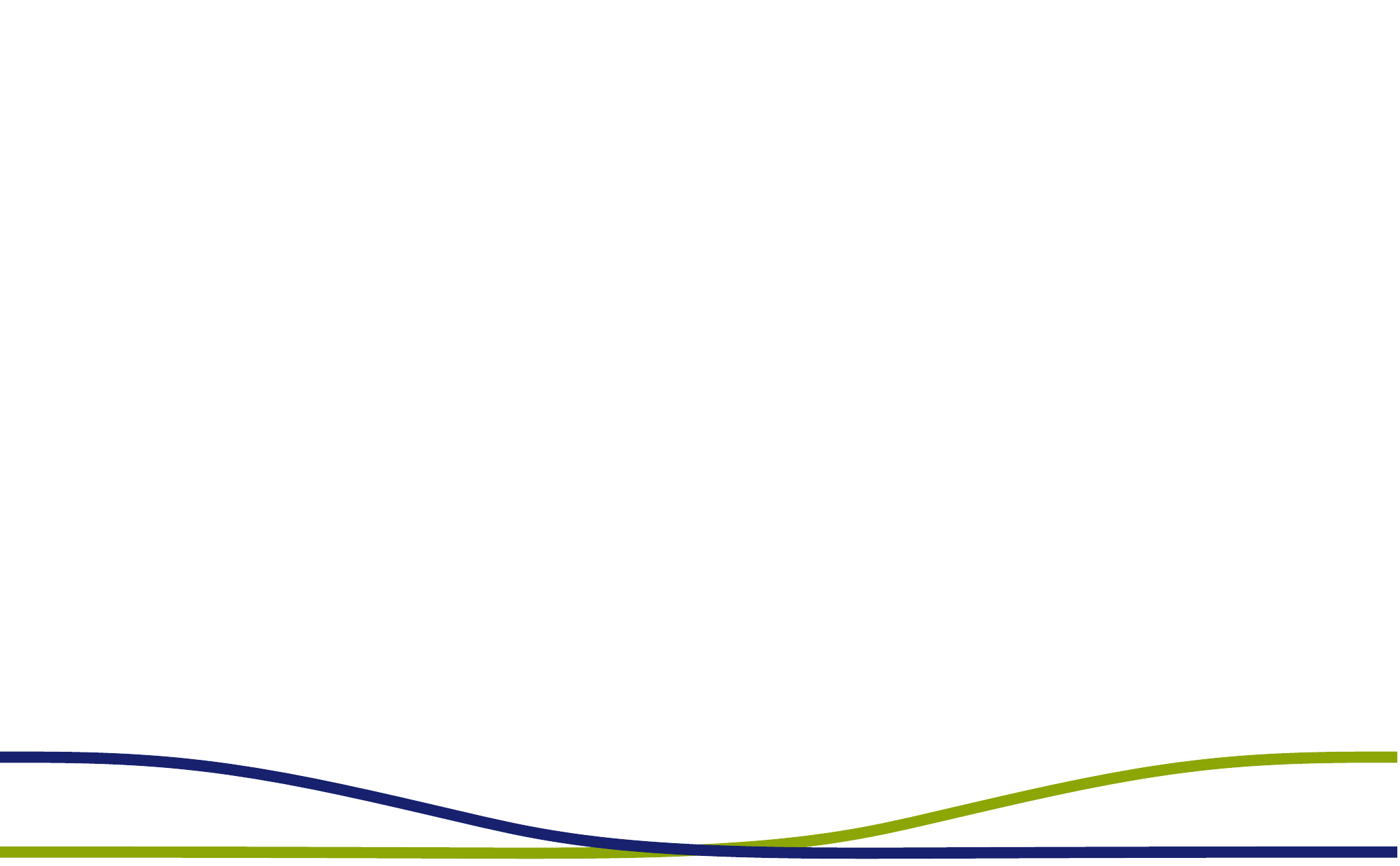};
\end{axis}
\end{tikzpicture} 
\quad\quad
\begin{tikzpicture}[scale=1]
\begin{axis}[
  tick pos=left,
  tick label style={font=\scriptsize},
          scale only axis,
  enlargelimits=false,
  xtick={0,3},
  ytick={0,1},
  xlabel={\small $t$},
  ylabel={\small $u(t)$},
  max space between ticks=50,
                minor x tick num=2,
                minor y tick num=5,  
every axis x label/.style={
below,
at={(2.3cm,0cm)},
  yshift=-3pt
  },
every axis y label/.style={
below,
at={(0cm,1.9cm)},
  xshift=-3pt},
  y label style={rotate=90,anchor=south},
  width=4.6cm,
  height=3.8cm,  
  xmin=0,
  xmax=3,
  ymin=0,
  ymax=1]
\addplot graphics[xmin=0,xmax=3,ymin=0,ymax=1] {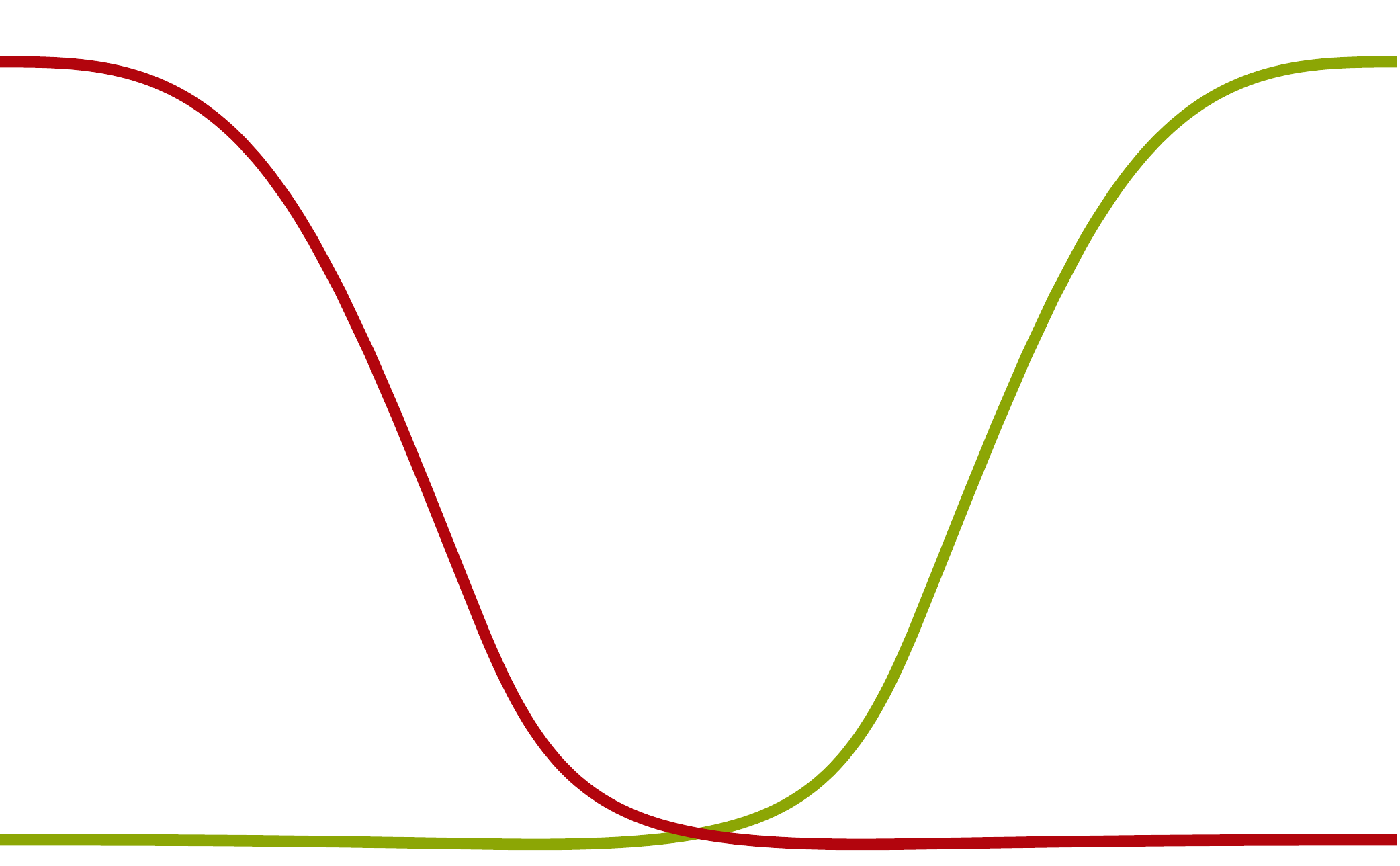};
\end{axis}
\end{tikzpicture}
\vspace{10pt}
\begin{tikzpicture}[scale=1]
\begin{axis}[
  tick pos=left,
  tick label style={font=\scriptsize},
          scale only axis,
  enlargelimits=false,
  xtick={0,3},
  ytick={0,1},
  xlabel={\small $t$},
  ylabel={\small $u(t)$},
  max space between ticks=50,
                minor x tick num=2,
                minor y tick num=5,  
every axis x label/.style={
below,
at={(2.3cm,0cm)},
  yshift=-3pt
  },
every axis y label/.style={
below,
at={(0cm,1.9cm)},
  xshift=-3pt},
  y label style={rotate=90,anchor=south},
  width=4.6cm,
  height=3.8cm,  
  xmin=0,
  xmax=3,
  ymin=0,
  ymax=1]
\addplot graphics[xmin=0,xmax=3,ymin=0,ymax=1] {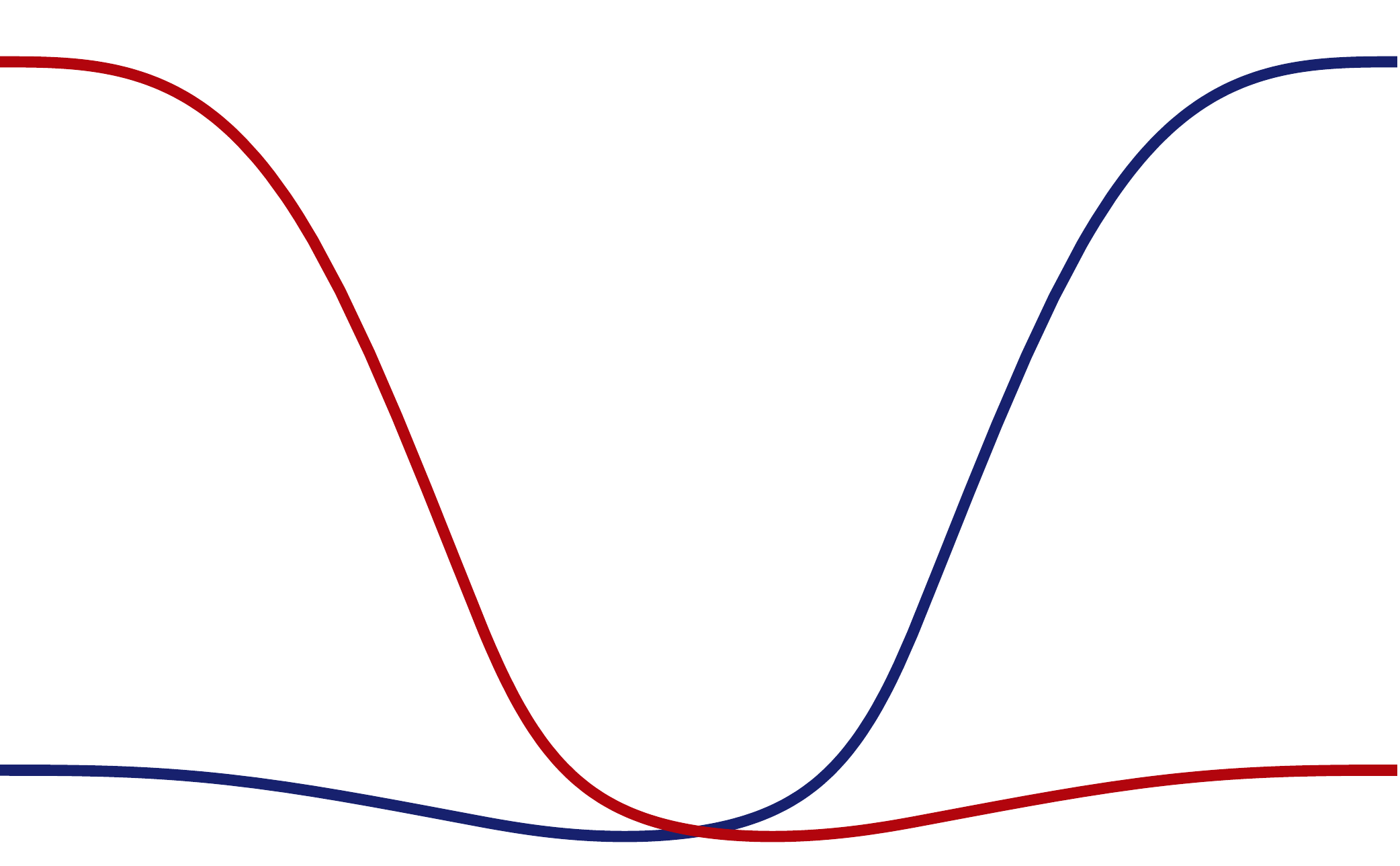};
\end{axis}
\end{tikzpicture} 
\quad\quad
\begin{tikzpicture}[scale=1]
\begin{axis}[
  tick pos=left,
  tick label style={font=\scriptsize},
          scale only axis,
  enlargelimits=false,
  xtick={0,3},
  ytick={0,1},
  xlabel={\small $t$},
  ylabel={\small $u(t)$},
  max space between ticks=50,
                minor x tick num=2,
                minor y tick num=5,  
every axis x label/.style={
below,
at={(2.3cm,0cm)},
  yshift=-3pt
  },
every axis y label/.style={
below,
at={(0cm,1.9cm)},
  xshift=-3pt},
  y label style={rotate=90,anchor=south},
  width=4.6cm,
  height=3.8cm,  
  xmin=0,
  xmax=3,
  ymin=0,
  ymax=1]
\addplot graphics[xmin=0,xmax=3,ymin=0,ymax=1] {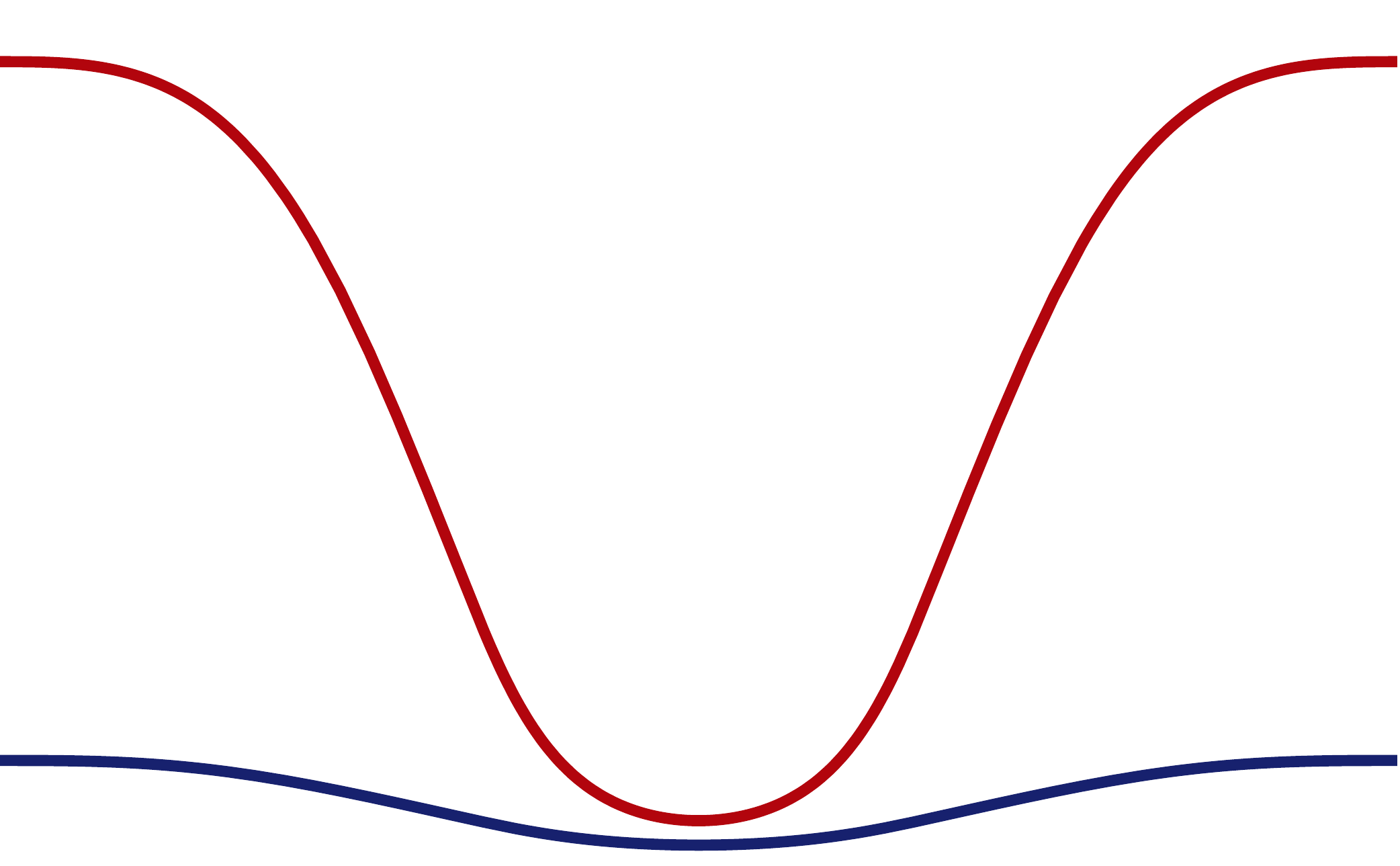};
\end{axis}
\end{tikzpicture} 
\caption{Graphs of eight positive solutions to Neumann problem \eqref{eq-ex} for $\lambda=20$ and $\mu=500$.}        
\label{fig-1}
\end{figure}

\medskip

Another point of view in the study on the number of positive solutions to the Neumann problem \eqref{eq-ex} consists in a qualitative bifurcation analysis. In particular, in our context we analyze bifurcation diagrams by fixing $\lambda$ and taking $\mu$ as the bifurcation parameter. We are going to discuss two different situations characterized by the choice of $\lambda$: on the one hand we illustrate the result of Theorem~\ref{th-main} for $\lambda$ sufficiently large, on the other hand we argue what happens in the other cases by means of numerical experiments illustrating some different and observable behaviors.

In this second part of the discussion we describe the bifurcation diagrams of some numerical experiments where we plot the values of $\mu$ in the horizontal axis versus the values of $u(0)$ in the vertical one.
In the bifurcation diagram of Figure~\ref{fig-2} we take the parameter $\lambda=20$ and consider the parameter $\mu$ ranging between $\mu=0$ and $\mu=600$. 
Actually, increasing $\mu$, we exhibit numerical evidence of the presence of eight positive (nontrivial) solutions to problem \eqref{eq-ex} in view of the existence of five bifurcation points: two transcritical, one pitchfork and two turning points. In particular, we observe that branches starting from the two turning points appear for $\mu>300$ (see also Figure~\ref{fig-2bis} for the magnification of the branches with ``large'' values of $u(0)$). This case covers the example treated in Figure~\ref{fig-1} and clearly  presents graphically the result stated in Theorem~\ref{th-main}. Indeed, in our framework the choice of $\lambda=20$ guarantees the applicability of Proposition~\ref{prop-2.2} (and symmetrically of Proposition~\ref{prop-2.4}), as suggested by the deformation of $X_{\mathopen{[}0,1\mathclose{]}}$ through the Poincar\'e map $\Phi_{0}^{\sigma}$ shown in the phase-plane plot in Figure~\ref{fig-2}. Moreover, by taking $\mu>\mu^{*}(20)$, with $\mu^{*}(20)>400$, the bifurcation diagram depicts the existence of eight positive (nontrivial) solutions to problem \eqref{eq-ex}.

Now, we investigate on quantitative aspects by decreasing the parameter $\lambda$. More precisely, we deal with some choices of $\lambda$ for which Proposition~\ref{prop-2.2} and Proposition~\ref{prop-2.4} do not apply, since the continuum $\Phi^{\sigma}_{0}(X_{\mathopen{[}0,1\mathclose{]}})$ does not intersect $\{0\}\times\mathopen{]}0,-\infty\mathclose{[}$. For that reason, we discuss the following cases: $\lambda=4$, $\lambda=8$ and $\lambda=10$ (cf.~the phase-plane plots in Figure~\ref{fig-3}, Figure~\ref{fig-4}, Figure~\ref{fig-5}, respectively).

For $\lambda=4$ numerical simulations show the existence of a branch (with a turning point) connecting two transcritical point. In addition to this configuration, for $\lambda=8$ and $\lambda=10$, a isola appears around the branch. In population dynamics it is not surprising to have a configuration of this type, as computed in \cite{LGMMTe-14,LGTe-14,LGTeZa-14,So-17}.
Moreover, we observe that, by increasing $\lambda$ in the range $\mathopen{[}8,10\mathclose{]}$, the isola is subjected to stretching and so, from the previous discussion for $\lambda=20$, these experiments suggest breaking points in the range $\mathopen{[}10,20\mathclose{]}.$
On account of the bifurcation diagrams, we remark that in each of the above cases there exist ranges $\mathopen{]}m_{0}(\lambda),m_{1}(\lambda)\mathclose{[}$ of the parameter $\mu$, with $0<m_{0}(\lambda)<m_{1}(\lambda)$, where problem \eqref{eq-ex} has at least two (nontrivial) solutions. 
The fact that $m_{0}(\lambda)>0$ is in accord with the condition for the existence of positive solutions to the Neumann problem associated with $(\mathscr{P}_{\lambda,\mu})$, namely 
$\mu > \lambda \int_{0}^{T} a^{+}(t)\,dt \, \big{/} \int_{0}^{T} a^{-}(t)\,dt$ (cf.~Theorem~\ref{th-LNS}).
Furthermore, in Figure~\ref{fig-3}, Figure~\ref{fig-4}, Figure~\ref{fig-5}, numerical simulations yield $m_{1}(\lambda)<+\infty$. At this point, in view of open problems presented in \cite{LoNaNi-13} and the achievement in \cite{BoGa-16dcds}, for problems with different kinds of diffusion rates and indefinite weight terms, the study of problem $(\mathscr{P}_{\lambda,\mu})$ for $\lambda$ small turns out to be a challenging issue. With this respect, the above discussion leads to the following conjecture.

\begin{figure}[!htb]
\centering
\begin{tikzpicture}[scale=1]
\begin{axis}[
  tick pos=left,
  tick label style={font=\scriptsize},
          scale only axis,
  enlargelimits=false,
  xtick={0,600},
  ytick={0,1},
  xlabel={\small $\mu$},
  ylabel={\small $u(0)$},
  max space between ticks=50,
                minor x tick num=5,
                minor y tick num=5,  
every axis x label/.style={
below,
at={(3.5cm,0cm)},
  yshift=-3pt
  },
every axis y label/.style={
below,
at={(0cm,2.5cm)},
  xshift=-3pt},
  y label style={rotate=90,anchor=south},
  width=7cm,
  height=5cm,  
  xmin=0,
  xmax=600,
  ymin=0,
  ymax=1]
\addplot graphics[xmin=0,xmax=600,ymin=-0.01,ymax=1.01] {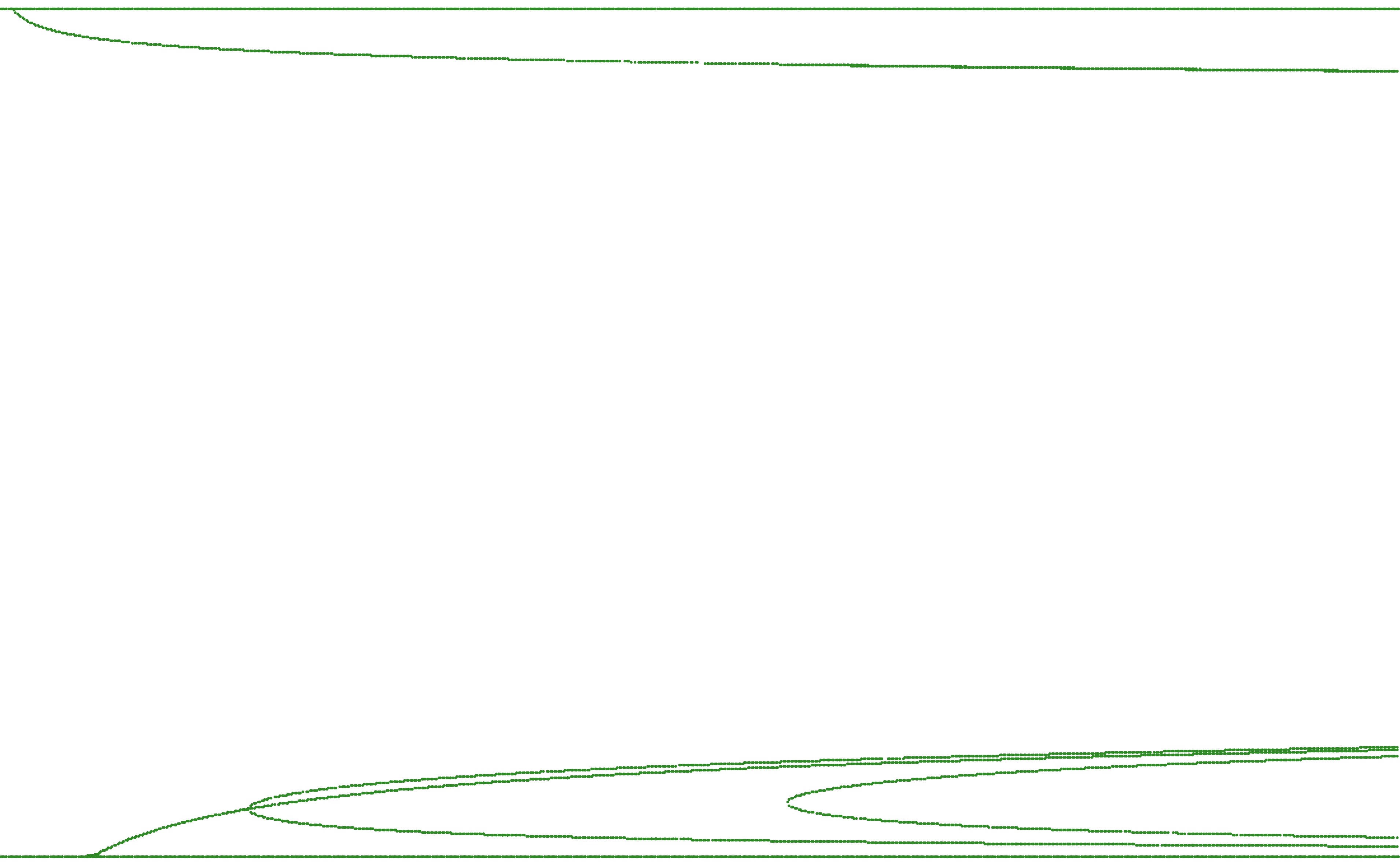};
\end{axis}
\end{tikzpicture}
\hspace{-0.2cm}
\begin{tikzpicture}[scale=1,baseline={(0,-1.35)}]
\begin{axis}[axis on top,
  tick pos=left,
  tick label style={font=\scriptsize},
  set layers,
    axis line style={on layer=axis foreground},
    scale only axis,
    grid,
  enlargelimits=false,
  xtick={0,1},
  ytick={0},
  xlabel={\small $u$},
  ylabel={\small $u'$},
  every axis x label/.style={
below,
at={(1.5cm,0cm)},
  yshift=-3pt
  },
every axis y label/.style={
below,
at={(0cm,1.5cm)},
  xshift=-3pt},
  y label style={rotate=90,anchor=south},
  width=3cm, 
  height=3cm,
  xmin=-0.2,
  xmax=1.2,
  ymin=-1.6,
  ymax=0.2] 
\addplot graphics[xmin=-0.22,xmax=1.02,ymin=-1.6,ymax=0.21] {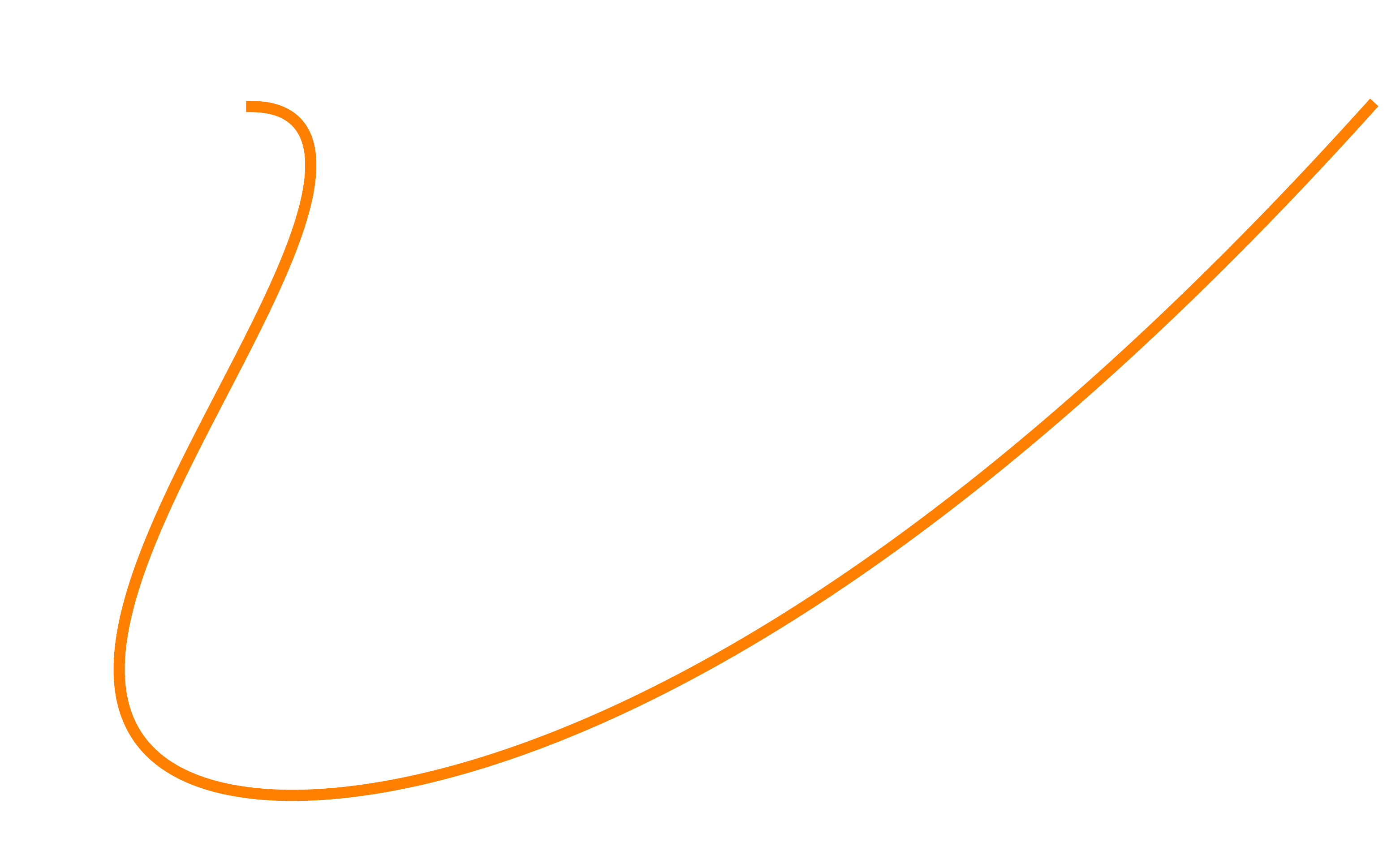};
\end{axis}
\end{tikzpicture} 
\caption{For the Neumann problem \eqref{eq-ex} with $\lambda=20$, bifurcation diagram with bifurcation parameter $\mu\in\mathopen{[}0,600\mathclose{]}$ (on the left) and deformation of $X_{\mathopen{[}0,1\mathclose{]}}$ through the Poincar\'e map $\Phi_{0}^{\sigma}$ (on the right).}        
\label{fig-2}
\end{figure}

\begin{figure}[!h]
\centering
\begin{tikzpicture}[scale=1]
\begin{axis}[
  tick pos=left,
  tick label style={font=\scriptsize},
          scale only axis,
  enlargelimits=false,
  xtick={300,600},
  ytick={0.925,0.935},
   yticklabel style={/pgf/number format/fixed,
                     /pgf/number format/precision=3},
  xlabel={\small $\mu$},
  ylabel={\small $u(0)$},
  max space between ticks=50,
                minor x tick num=2,
                minor y tick num=0,  
every axis x label/.style={
below,
at={(3.5cm,0cm)},
  yshift=-3pt
  },
every axis y label/.style={
below,
at={(0cm,2.5cm)},
  xshift=-3pt},
  y label style={rotate=90,anchor=south},
  width=7cm,
  height=5cm,
  xmin=300,
  xmax=600,
  ymin=0.925,
  ymax=0.935]
\addplot graphics[xmin=300,xmax=600,ymin=0.925,ymax=0.935] {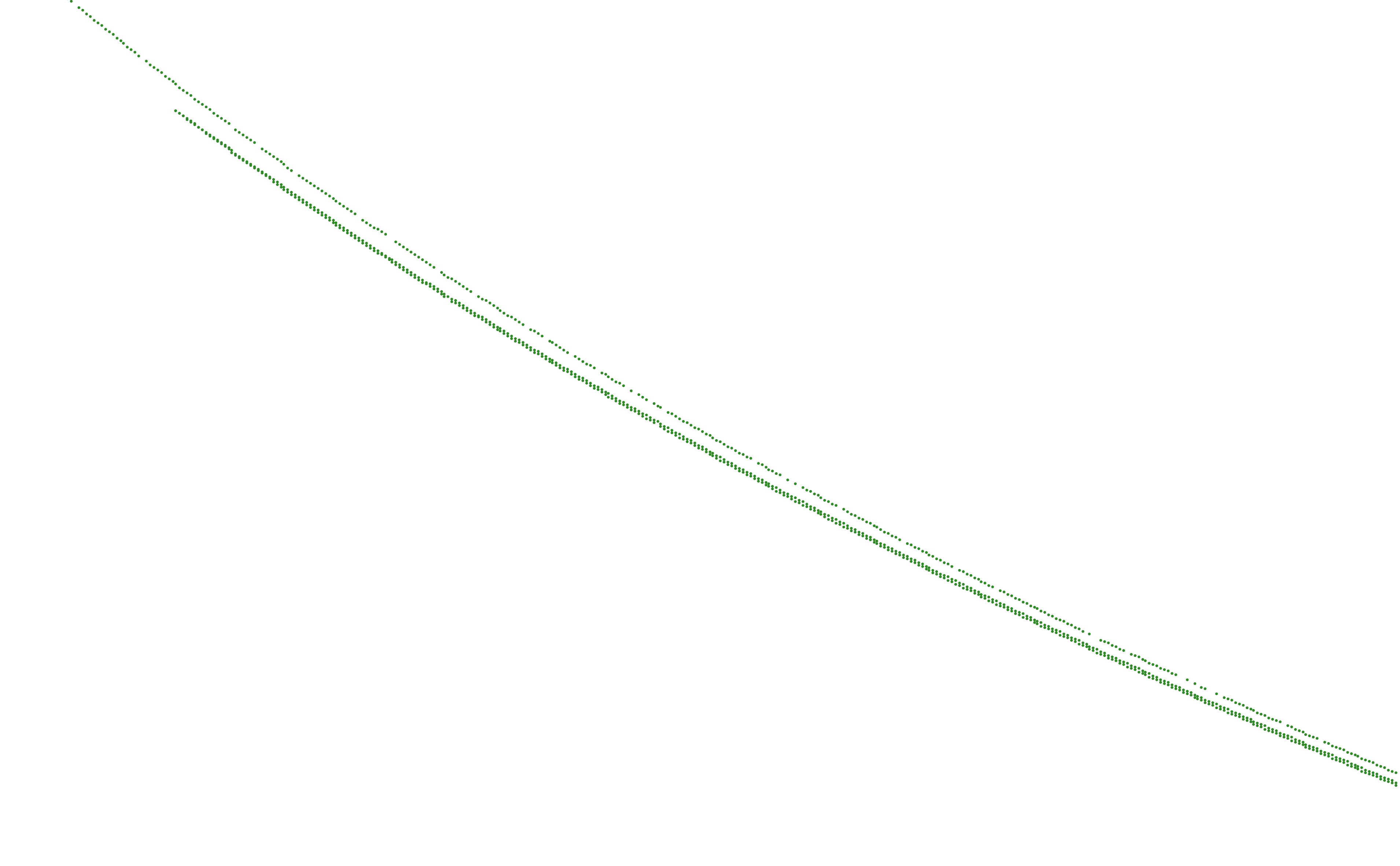};
\end{axis}
\end{tikzpicture} 
\caption{Detail of the bifurcation diagram for the Neumann problem \eqref{eq-ex} with $\lambda=20$ and bifurcation parameter $\mu\in\mathopen{[}300,600\mathclose{]}$ for $u(0)\in\mathopen{[}0.925,0.935\mathclose{]}$.}        
\label{fig-2bis}
\end{figure}

\begin{figure}[h]
\centering
\begin{tikzpicture}[scale=1]
\begin{axis}[
  tick pos=left,
  tick label style={font=\scriptsize},
          scale only axis,
  enlargelimits=false,
  xtick={0,20},
  ytick={0,1},
  xlabel={\small $\mu$},
  ylabel={\small $u(0)$},
  max space between ticks=50,
                minor x tick num=3,
                minor y tick num=5,  
every axis x label/.style={
below,
at={(3.5cm,0cm)},
  yshift=-3pt
  },
every axis y label/.style={
below,
at={(0cm,2.5cm)},
  xshift=-3pt},
  y label style={rotate=90,anchor=south},
  width=7cm,
  height=5cm,
  xmin=0,
  xmax=20,
  ymin=0,
  ymax=1]
\addplot graphics[xmin=0,xmax=20,ymin=-0.01,ymax=1.01] {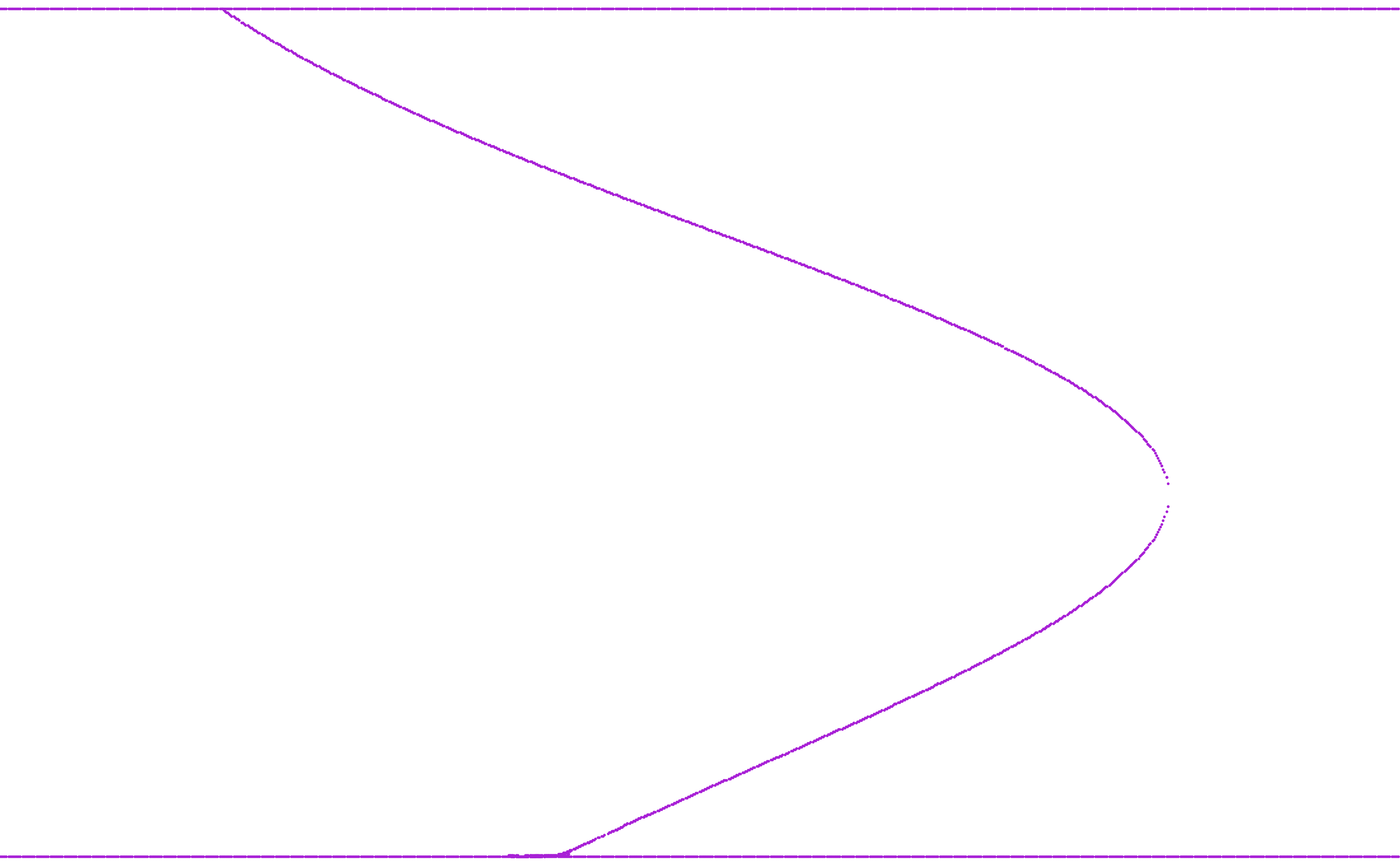};
\end{axis}
\end{tikzpicture}
\hspace{-0.2cm}
\begin{tikzpicture}[scale=1,baseline={(0,-1.35)}]
\begin{axis}[axis on top,
  tick pos=left,
  tick label style={font=\scriptsize},
  set layers,
    axis line style={on layer=axis foreground},
    scale only axis,
    grid,
  enlargelimits=false,
  xtick={0,1},
  ytick={0},
  xlabel={\small $u$},
  ylabel={\small $u'$},
  every axis x label/.style={
below,
at={(1.5cm,0cm)},
  yshift=-3pt
  },
every axis y label/.style={
below,
at={(0cm,1.5cm)},
  xshift=-3pt},
  y label style={rotate=90,anchor=south},
  width=3cm, 
  height=3cm,
  xmin=-0.2,
  xmax=1.2,
  ymin=-1.6,
  ymax=0.2] 
\addplot graphics[xmin=-0.22,xmax=1.02,ymin=-1.6,ymax=0.21] {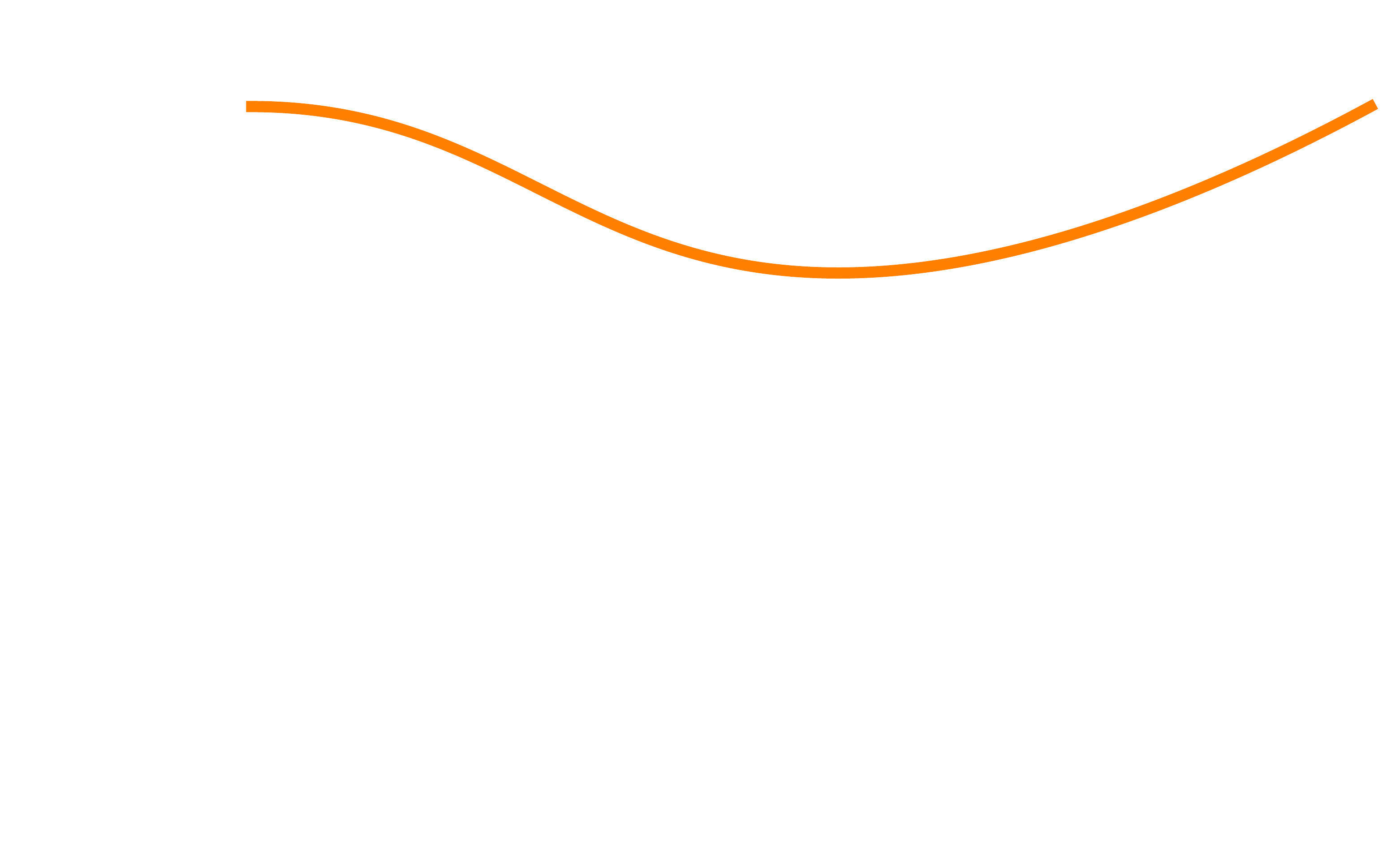};
\end{axis}
\end{tikzpicture} 
\caption{For the Neumann problem \eqref{eq-ex} with $\lambda=4$, bifurcation diagram with bifurcation parameter $\mu\in\mathopen{[}0,20\mathclose{]}$ (on the left) and deformation of $X_{\mathopen{[}0,1\mathclose{]}}$ through the Poincar\'e map $\Phi_{0}^{\sigma}$ (on the right).}        
\label{fig-3}
\end{figure}

\begin{figure}[!h]
\centering
\begin{tikzpicture}[scale=1]
\begin{axis}[
  tick pos=left,
  tick label style={font=\scriptsize},
          scale only axis,
  enlargelimits=false,
  xtick={0,200},
  ytick={0,1},
  xlabel={\small $\mu$},
  ylabel={\small $u(0)$},
  max space between ticks=50,
                minor x tick num=3,
                minor y tick num=5,  
every axis x label/.style={
below,
at={(3.5cm,0cm)},
  yshift=-3pt
  },
every axis y label/.style={
below,
at={(0cm,2.5cm)},
  xshift=-3pt},
  y label style={rotate=90,anchor=south},
  width=7cm,
  height=5cm,
  xmin=0,
  xmax=200,
  ymin=0,
  ymax=1]
\addplot graphics[xmin=0,xmax=200,ymin=-0.01,ymax=1.01] {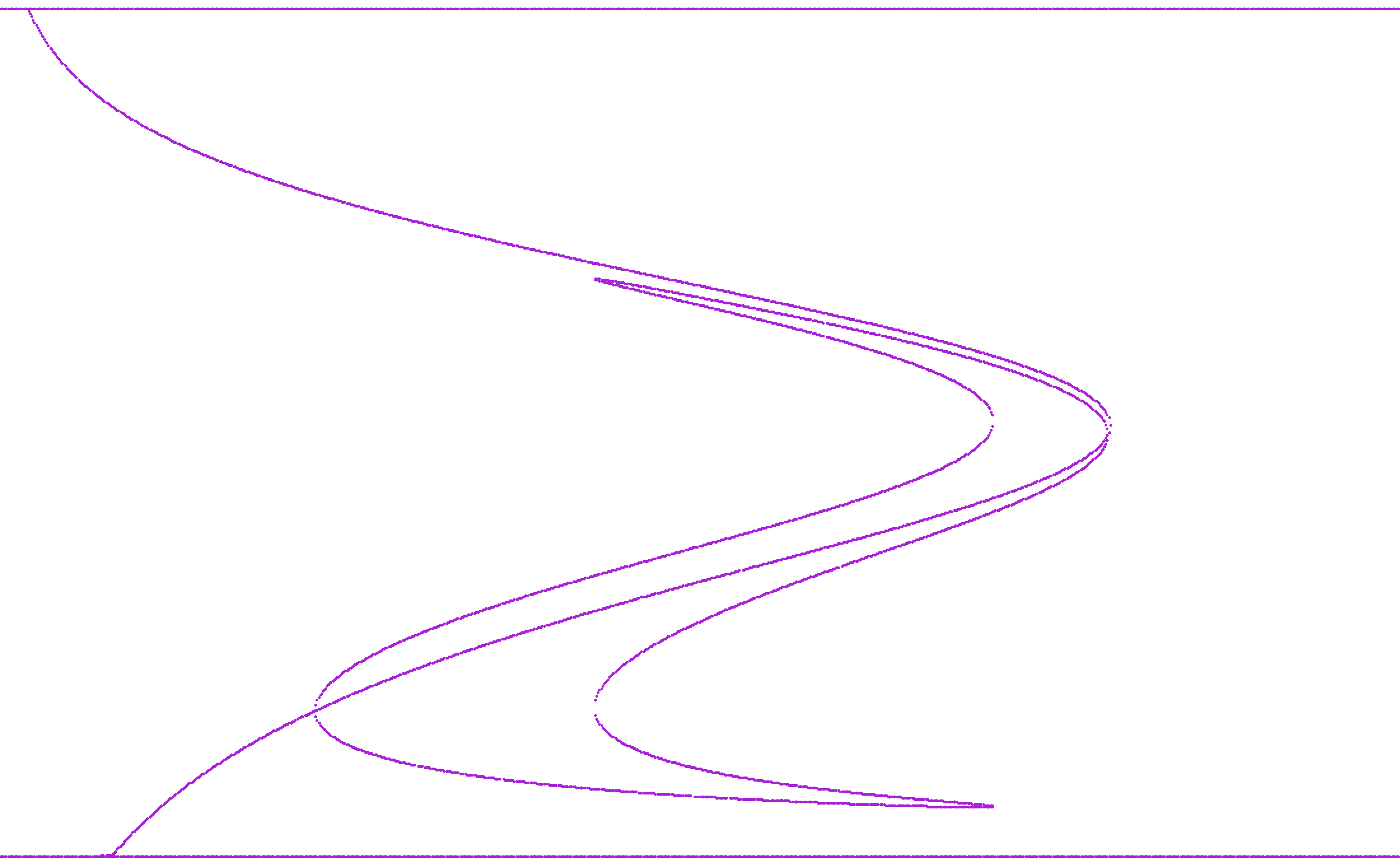};
\end{axis}
\end{tikzpicture}
\hspace{-0.2cm}
\begin{tikzpicture}[scale=1,baseline={(0,-1.35)}]
\begin{axis}[axis on top,
  tick pos=left,
  tick label style={font=\scriptsize},
  set layers,
    axis line style={on layer=axis foreground},
    scale only axis,
    grid,
  enlargelimits=false,
  xtick={0,1},
  ytick={0},
  xlabel={\small $u$},
  ylabel={\small $u'$},
  every axis x label/.style={
below,
at={(1.5cm,0cm)},
  yshift=-3pt
  },
every axis y label/.style={
below,
at={(0cm,1.5cm)},
  xshift=-3pt},
  y label style={rotate=90,anchor=south},
  width=3cm, 
  height=3cm,
  xmin=-0.2,
  xmax=1.2,
  ymin=-1.6,
  ymax=0.2] 
\addplot graphics[xmin=-0.22,xmax=1.02,ymin=-1.6,ymax=0.21] {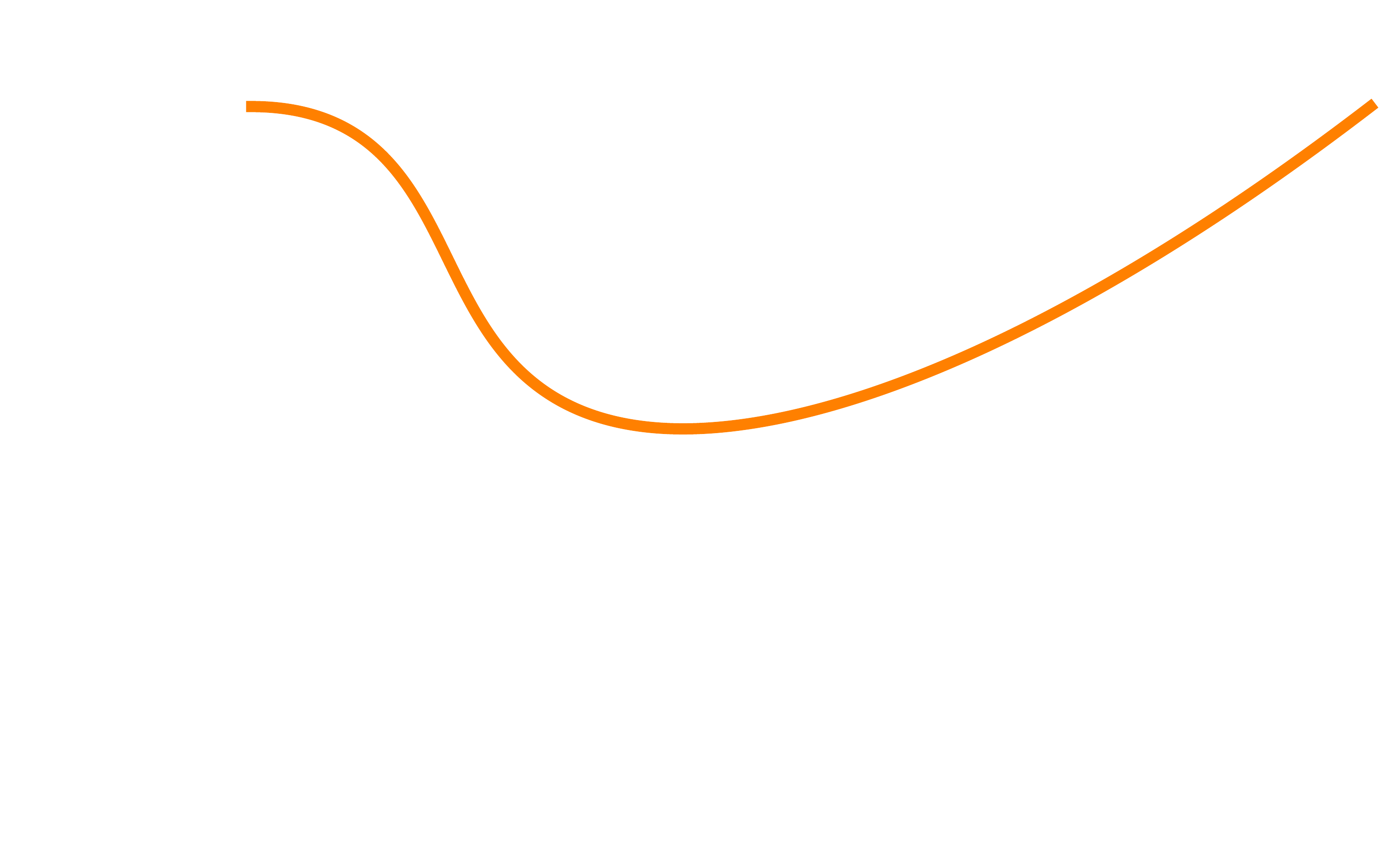};
\end{axis}
\end{tikzpicture} 
\caption{For the Neumann problem \eqref{eq-ex} with $\lambda=8$, bifurcation diagram with bifurcation parameter $\mu\in\mathopen{[}0,200\mathclose{]}$ (on the left) and deformation of $X_{\mathopen{[}0,1\mathclose{]}}$ through the Poincar\'e map $\Phi_{0}^{\sigma}$ (on the right).}        
\label{fig-4}
\end{figure}

\begin{figure}[h]
\centering
\begin{tikzpicture}[scale=1]
\begin{axis}[
  tick pos=left,
  tick label style={font=\scriptsize},
          scale only axis,
  enlargelimits=false,
  xtick={0,800},
  ytick={0,1},
  xlabel={\small $\mu$},
  ylabel={\small $u(0)$},
  max space between ticks=50,
                minor x tick num=7,
                minor y tick num=5,  
every axis x label/.style={
below,
at={(3.5cm,0cm)},
  yshift=-3pt
  },
every axis y label/.style={
below,
at={(0cm,2.5cm)},
  xshift=-3pt},
  y label style={rotate=90,anchor=south},
  width=7cm,
  height=5cm,
  xmin=0,
  xmax=800,
  ymin=0,
  ymax=1]
\addplot graphics[xmin=0,xmax=800,ymin=-0.01,ymax=1.01] {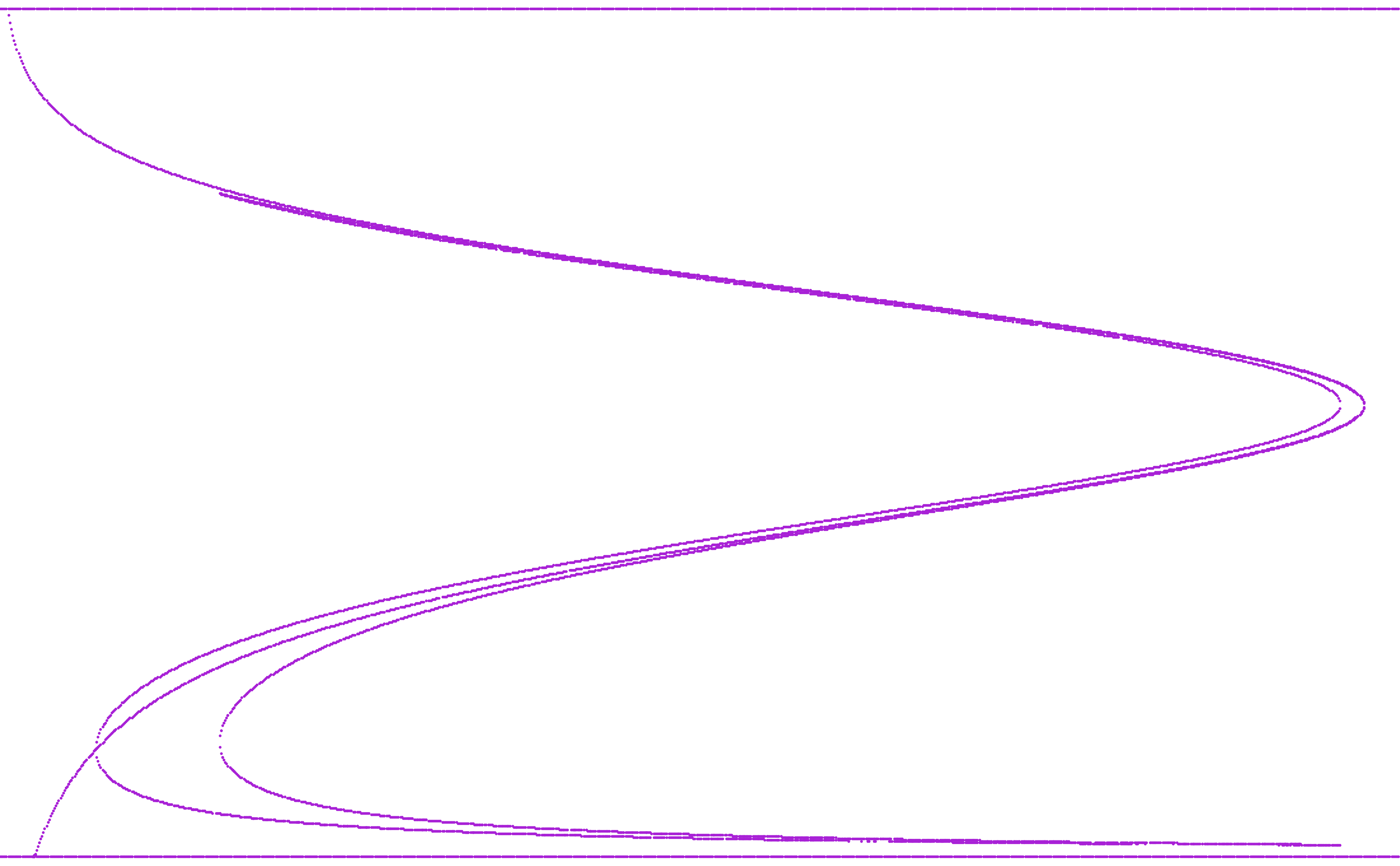};
\end{axis}
\end{tikzpicture}
\hspace{-0.2cm}
\begin{tikzpicture}[scale=1,baseline={(0,-1.35)}]
\begin{axis}[axis on top,
  tick pos=left,
  tick label style={font=\scriptsize},
  set layers,
    axis line style={on layer=axis foreground},
    scale only axis,
    grid,
  enlargelimits=false,
  xtick={0,1},
  ytick={0},
  xlabel={\small $u$},
  ylabel={\small $u'$},
  every axis x label/.style={
below,
at={(1.5cm,0cm)},
  yshift=-3pt
  },
every axis y label/.style={
below,
at={(0cm,1.5cm)},
  xshift=-3pt},
  y label style={rotate=90,anchor=south},
  width=3cm, 
  height=3cm,
  xmin=-0.2,
  xmax=1.2,
  ymin=-1.6,
  ymax=0.2] 
\addplot graphics[xmin=-0.22,xmax=1.02,ymin=-1.6,ymax=0.21] {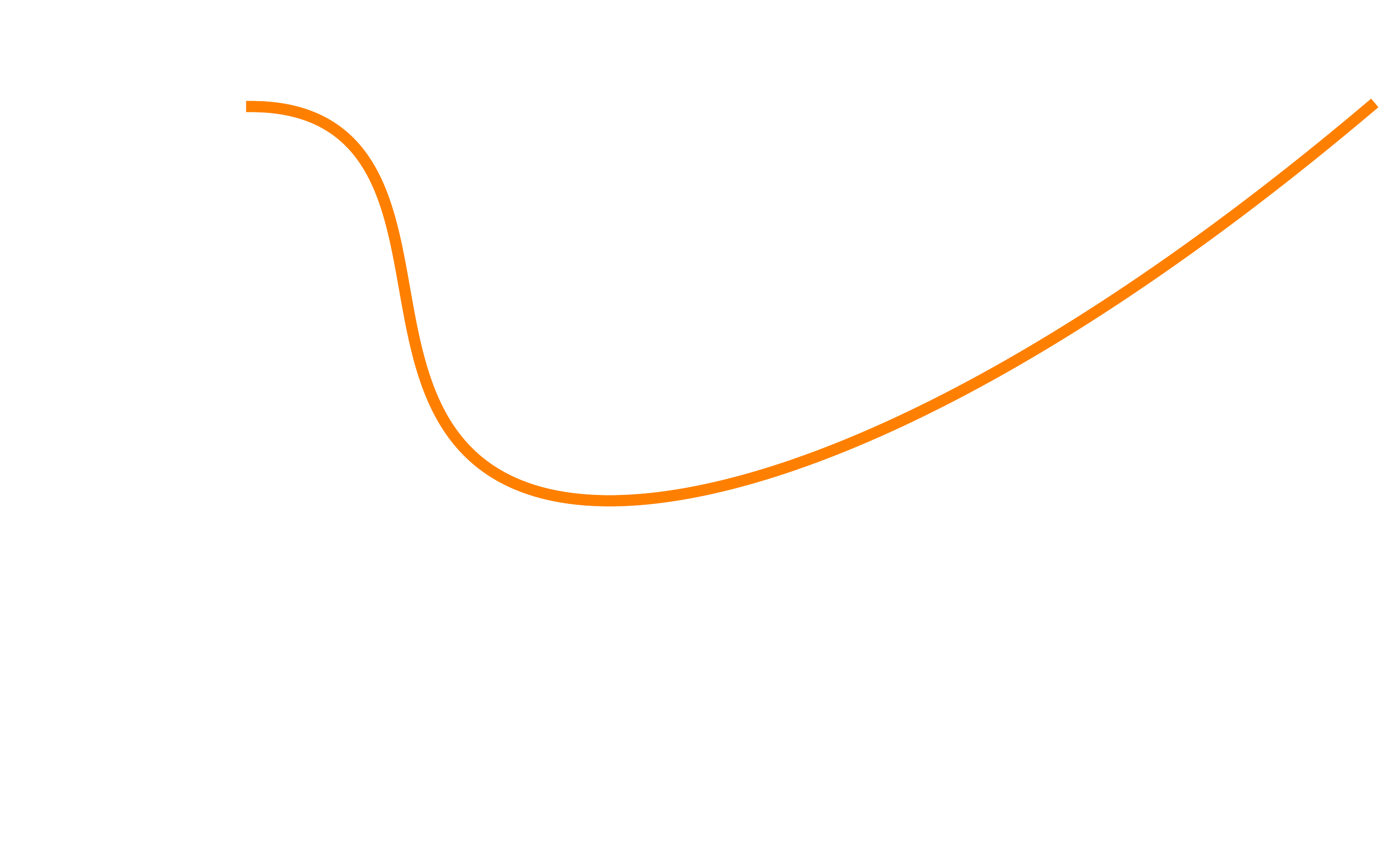};
\end{axis}
\end{tikzpicture} 
\caption{For the Neumann problem \eqref{eq-ex} with $\lambda=10$, bifurcation diagram with bifurcation parameter $\mu\in\mathopen{[}0,800\mathclose{]}$ (on the left) and deformation of $X_{\mathopen{[}0,1\mathclose{]}}$ through the Poincar\'e map $\Phi_{0}^{\sigma}$ (on the right).}        
\label{fig-5}
\end{figure}

\begin{conjecture}
Let $g \colon \mathopen{[}0,1\mathclose{]} \to \mathbb{R}^{+}$ be a locally Lipschitz continuous function satisfying $(g_{*})$ and $(g_{0})$.
Let $a \colon \mathopen{[}0,T\mathclose{]} \to \mathbb{R}$ be an $L^{1}$-function satisfying $(a_{*})$. Then, there exists $\lambda_{*}>0$ such that for each $\lambda\in\mathopen{]}0,\lambda_{*}\mathclose{]}$ there exists $\mu_{*}(\lambda)>0$ such that for every $\mu>\mu_{*}(\lambda)$ problem $(\mathscr{P}_{\lambda,\mu})$ has no positive solutions.
\end{conjecture}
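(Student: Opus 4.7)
My plan is to argue by contradiction. Fix $\lambda>0$, pick a sequence $\mu_{n}\to+\infty$, and assume each $(\mathscr{P}_{\lambda,\mu_{n}})$ admits a positive solution $u_{n}$; the goal is to force $\lambda\geq\lambda_{*}$ for an explicit $\lambda_{*}>0$, which is the contrapositive of the conjecture.

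The first step is a structural analysis on the negative hump $[\sigma,\tau]$, where $u_{n}''=\mu_{n}a^{-}(t)g(u_{n})\geq 0$, so $u_{n}$ is convex with values in $(0,1)$. Multiplying the equation by $u_{n}'$ on a subinterval $J\subseteq[\sigma,\tau]$ where $a^{-}\geq c>0$ produces an energy-type estimate $(u_{n}'(t))^{2}\gtrsim\mu_{n}G(u_{n}(t))$ with $G(s):=\int_{0}^{s}g$; since the oscillation of $u_{n}$ on $J$ is at most $1$, the set $\{t\in J:u_{n}(t)\in[\eta,1-\eta]\}$ has Lebesgue measure $O(\mu_{n}^{-1/2})$ for every fixed $\eta>0$. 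A mixed configuration $u_{n}(\sigma)\to 0$ with $u_{n}(\tau)\to 1$ would force $u_{n}'(\tau)\to+\infty$ by the same identity, which is incompatible with the $\mu_{n}$-independent bound $|u_{n}''|\leq\lambda\|a^{+}\|_{\infty}\max g$ on the concave hump $[\tau,T]$, since $u_{n}(T)$ would exceed $1$. Thus, up to subsequence, either $u_{n}(\sigma),u_{n}(\tau)\to 0$ or $u_{n}(\sigma),u_{n}(\tau)\to 1$.

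The second step transports this information to the whole interval. On $[0,\sigma]$, combining $|u_{n}''|\leq\lambda\|a^{+}\|_{\infty}\max g$ with concavity and the boundary condition at $0$ yields, by an elementary convexity argument, $\max_{[0,\sigma]}u_{n}\leq u_{n}(\sigma)+C\lambda$ and $\max_{[0,\sigma]}(1-u_{n})\leq (1-u_{n}(\sigma))+C\lambda$, with $C$ depending only on the data, and mirrored bounds hold on $[\tau,T]$. Hence, for $\lambda$ small, either $\|u_{n}\|_{\infty}\to 0$ or $\|1-u_{n}\|_{\infty}\to 0$. Rewriting the equation as $u_{n}''+h_{n}u_{n}=0$ with $h_{n}:=(\lambda a^{+}-\mu_{n}a^{-})\,g(u_{n})/u_{n}$, the positive part $h_{n}^{+}$ is supported in $\{a^{+}>0\}$ and dominated by $\lambda\|a^{+}\|_{\infty}\varepsilon_{n}$, where $\varepsilon_{n}:=\sup_{0<s\leq\|u_{n}\|_{\infty}}g(s)/s\to 0$ by $(g_{0})$ (or, in the mirrored branch, by $g(1)=0$ together with Lipschitz continuity). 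Testing the equation against $u_{n}$ and applying the Rayleigh characterisation of the first eigenvalue $\xi_{1}(0)$ of $-v''$ under the prescribed boundary conditions gives $\xi_{1}(0)\leq\lambda\|a^{+}\|_{\infty}\varepsilon_{n}$; for Dirichlet or either mixed boundary condition $\xi_{1}(0)>0$, which for $n$ large contradicts the bound as soon as $\lambda$ is small enough. This defines $\lambda_{*}$ and closes three of the four cases.

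The main obstacle is the pure Neumann case, where $\xi_{1}(0)=0$ makes the Rayleigh comparison vacuous. The natural starting point here is the integral identity $\lambda\int_{0}^{T}a^{+}g(u_{n})=\mu_{n}\int_{0}^{T}a^{-}g(u_{n})$, obtained by integrating the equation against $1$, which forces $\int a^{-}g(u_{n})=O(1/\mu_{n})$; testing instead against $u_{n}$ yields $\int_{0}^{T}(u_{n}')^{2}\leq\lambda\|a^{+}\|_{\infty}\varepsilon_{n}\|u_{n}\|_{2}^{2}$, so that the rescaled sequence $v_{n}:=u_{n}/\|u_{n}\|_{\infty}$ is asymptotically constant in $L^{2}$ and, after elliptic upgrading using the equation for $v_{n}$, uniformly. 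The contradiction should then come from reinserting $v_{n}\to c>0$ into the integral identity divided by $\|u_{n}\|_{\infty}$ and exploiting $g(u_{n})/u_{n}\to 0$. Making this limit passage rigorous, together with handling the Step~1 dichotomy uniformly across all four boundary conditions and the branch $u_{n}\to 1$ where $(g_{0})$ is not directly applicable, is where the genuine technical work lies.
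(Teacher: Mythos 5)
This statement is presented in the paper as an open \emph{conjecture}: the authors support it only with the numerical bifurcation diagrams of Figures~\ref{fig-3}--\ref{fig-5} and prove nothing, so there is no proof of theirs to compare against, and a correct argument here would be a genuinely new result. Your attempt, however, contains gaps that are not merely technical. The most serious is the Step~1 dichotomy. You exclude the mixed profile $u_{n}(\sigma)\to 0$, $u_{n}(\tau)\to 1$ by claiming it forces $u_{n}'(\tau)\to+\infty$. Convexity of $u_{n}$ on $\mathopen{[}\sigma,\tau\mathclose{]}$ only gives $u_{n}'(\tau)\geq\bigl(u_{n}(\tau)-u_{n}(\sigma)\bigr)/(\tau-\sigma)$, which is bounded, and the energy identity $\tfrac{d}{dt}\tfrac{(u_{n}')^{2}}{2}=\mu_{n}a^{-}(t)g(u_{n})u_{n}'$ produces a lower bound of order $\mu_{n}$ only where $a^{-}$ is bounded below by a positive constant \emph{and} $u_{n}'$ has a fixed sign; under $(a_{*})$ the weight is merely $L^{1}$ with $a\prec 0$ on $\mathopen{[}\sigma,\tau\mathclose{]}$, so $a^{-}$ may vanish identically on the subinterval where $u_{n}$ crosses the mid-range, and there $u_{n}$ is affine with slope of order one. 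Worse, your exclusion argument nowhere uses that $\lambda$ is small: if it were valid it would apply verbatim for $\lambda=20$, where Theorem~\ref{th-main} and Figure~\ref{fig-1} exhibit, for every large $\mu$, solutions that are very small on the first positive hump and close to $1$ on the second; these are genuine solutions with $0<u<1$ and hence bounded derivative at $\tau$. So mixed profiles cannot be ruled out by this mechanism, and the dichotomy on which all of Step~2 rests is unproved.

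Even granting the dichotomy, the branch $\|1-u_{n}\|_{\infty}\to 0$ is not the mirror image of the branch $\|u_{n}\|_{\infty}\to 0$. Writing $v_{n}=1-u_{n}$ one gets $v_{n}''+\bigl(\mu_{n}a^{-}-\lambda a^{+}\bigr)\,\bigl(g(1-v_{n})/v_{n}\bigr)\,v_{n}=0$, so the positive part of the effective potential is carried by $\mu_{n}a^{-}$, not by $\lambda a^{+}$: the Lipschitz bound $g(1-v)\leq Lv$ gives $h_{n}^{+}\leq\mu_{n}La^{-}$, which blows up as $\mu_{n}\to+\infty$, and the Rayleigh comparison yields nothing --- smallness of $\lambda$ buys nothing near $u=1$. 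Finally, you concede that the pure Neumann case, precisely the one the paper studies numerically and where $\xi_{1}(0)=0$, is not closed; the passage from $\int_{0}^{T}(u_{n}')^{2}\leq\lambda\,\varepsilon_{n}\,C\,\|u_{n}\|_{\infty}^{2}$ to a contradiction via the rescaling $v_{n}=u_{n}/\|u_{n}\|_{\infty}$ is exactly where the difficulty of the conjecture sits, since constants are excluded only by the sign structure of the weight and not by an eigenvalue gap. (A smaller, recurrent issue: $(a_{*})$ assumes only $a\in L^{1}(0,T)$, so every occurrence of $\|a^{+}\|_{\infty}$ must be replaced by $\|a^{+}\|_{L^{1}}$ together with the corresponding integrated estimates.) As it stands, the proposal does not prove the conjecture.
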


\medskip

We conclude this section with a further open problem motivated by the papers \cite{BoGoHa-05,FeZa-15jde,FeZa-17}, where $g(u)\sim u^{p}$, $p>1$, or the paper \cite{BoFeZa-17tams}, where $g(u)\sim u^{2}/(1+u^{2})$. In these works, the weight term has $m$ intervals of positivity separated by intervals of negativity which characterize the number of positive solutions. As previously observed with respect to problem \eqref{eq-ex} (with $m=2$), each solution exhibits the following three behaviors in each interval of positivity: being ``very small'', ``small'', ``large''. This ``combinatory'' produces the $8=3^{2}-1$ positive (nontrivial) solutions. Thus, in the same spirit of \cite{BoFeZa-17tams}, we introduce the following problem.

\begin{conjecture}\label{con-2}
Let $g \colon \mathopen{[}0,1\mathclose{]} \to \mathbb{R}^{+}$ be a locally Lipschitz continuous function satisfying $(g_{*})$ and $(g_{0})$.
Let $a \colon \mathopen{[}0,T\mathclose{]} \to \mathbb{R}$ be an $L^{1}$-function. Suppose that there exist $2m$ points
\begin{equation*}
0 = \tau_{1} < \sigma_{1} < \ldots < \tau_{i} < \sigma_{i} < \ldots < \tau_{m} <
\sigma_{m}  = T,
\end{equation*}
such that $a(t)\succ0$ on $\mathopen{[}\tau_{i},\sigma_{i}\mathclose{]}$ and $a(t)\prec0$ on $\mathopen{[}\sigma_{i},\tau_{i+1}\mathclose{]}.$
Then, there exists $\lambda^{*}>0$ such that for each $\lambda>\lambda^{*}$ there exists $\mu^{*}(\lambda)>0$ such that for every $\mu>\mu^{*}(\lambda)$ problem $(\mathscr{P}_{\lambda,\mu})$ has at least $3^{m}-1$ positive solutions.
\end{conjecture}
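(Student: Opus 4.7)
The proof proposal is to induct on $m$, extending the phase-plane shooting scheme of Theorem~\ref{th-main}. Fix intermediate points $\kappa_j \in \mathopen{]}\sigma_j,\tau_{j+1}\mathclose{[}$ for $j\in\{1,\ldots,m-1\}$, one in each negative hump. The plan is to shoot forward from $t=0$ to $\kappa_{m-1}$ and produce $3^{m-1}$ pairwise disjoint sub-continua of $\Phi_0^{\kappa_{m-1}}(\mathcal{A}_0)$ inside the strip $\mathopen{]}0,1\mathclose{[}\times\mathbb{R}$, and shoot backward from $t=T$ to $\kappa_{m-1}$ to produce $3$ such sub-continua of $\Phi_T^{\kappa_{m-1}}(\mathcal{A}_T)$, with $\mathcal{A}_0,\mathcal{A}_T \in \{X_{\mathopen{[}0,1\mathclose{]}},Y_{\geq0},Y_{\leq0}\}$ chosen according to the boundary conditions. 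Intersecting gives $3^{m-1}\cdot 3 = 3^m$ candidate initial data in $\mathopen{]}0,1\mathclose{[}\times\mathbb{R}$, from which one subtracts the single point corresponding to the trivial solution $u\equiv 0$ (present because the origin is a common fixed point of both Poincar\'e maps), leaving $3^m-1$ positive solutions. The case $m=2$ recovers Theorem~\ref{th-main} exactly.

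The base case $j=1$ of the induction is Propositions~\ref{prop-2.1}--\ref{prop-2.4}. The inductive step requires an abstract extension: for any continuum $\Gamma \subseteq \mathopen{]}0,1\mathclose{[}\times\mathbb{R}$ whose closure meets both $\{0\}\times\mathopen{]}-\infty,0\mathclose{]}$ and $\{1\}\times\mathopen{[}0,+\infty\mathclose{[}$, its image under the Poincar\'e map from $\kappa_j$ to $\kappa_{j+1}$---which traverses the tail of one negative hump, a full positive hump $\mathopen{[}\tau_{j+1},\sigma_{j+1}\mathclose{]}$, and the head of the next negative hump---contains three sub-continua with the same connectivity property, provided $\lambda$ and $\mu$ are large enough. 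The argument mirrors Proposition~\ref{prop-2.1}: by the intermediate value theorem applied to the $x$-projection along $\Gamma$, the first coordinate assumes every value in $\mathopen{[}0,1\mathclose{[}$ and in particular ``very small'', ``intermediate'' and ``near-$1$'' initial data are available. The Rabinowitz-type Lemmas A1.* applied on the positive hump and the blow-up Lemmas A3.* applied on the partial negative humps carry these three regimes into three disjoint descendants of $\Gamma$, while the trapping and exclusion properties of Remarks~\ref{rem-2.1}--\ref{rem-2.2} keep the interior parts in $\mathopen{]}0,1\mathclose{[}\times\mathbb{R}$ and push the endpoints onto the correct vertical half-lines. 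Iterating this lemma $m-1$ times, and combining with a single symmetric application of Propositions~\ref{prop-2.3}--\ref{prop-2.4} to obtain the three backward continua, a standard connectivity argument as in Theorem~\ref{th-main} yields the $3^m-1$ positive solutions simultaneously for all four boundary conditions in \eqref{BC}.

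The principal difficulty lies in the inductive lemma itself: the thresholds $\lambda^\sharp$ and $\mu^\sharp$ must be chosen uniformly in $j$, so that a single pair $(\lambda^*,\mu^*(\lambda))$ suffices for all $m-1$ inductive steps. This will demand quantitative versions of the appendix lemmas in which constants depend only on $\|a^\pm\|_{L^1}$, $\max_{\mathopen{[}0,1\mathclose{]}}g$ and the geometry of the partition, not on the inductive level; in practice one needs to control, at each $\kappa_j$, the $L^\infty$-size of the sub-continua so that the ``very small / intermediate / near-$1$'' trichotomy can be reproduced with bounds independent of $j$. A secondary obstacle is verifying that the $3^m$ candidate intersection points are pairwise distinct and avoid the boundary of the strip; pairwise disjointness of the descendant continua at every inductive stage, combined with the prohibited regions $E_i$, $E'_i$ of Remark~\ref{rem-2.2}, should suffice to settle this.
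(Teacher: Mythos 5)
What you have written is a plan, not a proof, and the step on which it hinges is exactly the point at which the paper itself stops: the statement you are addressing is stated in the paper as a \emph{conjecture}, supported only by an informal inductive scheme and numerical evidence, and your proposal reproduces that same heuristic (one intermediate point $\kappa_j$ in each negativity interval, a ``very small / intermediate / near-$1$'' trichotomy on each positivity interval, a final intersection with the three backward continua). The ``abstract extension'' you invoke in the inductive step --- that the image of an arbitrary continuum $\Gamma\subseteq\mathopen{]}0,1\mathclose{[}\times\mathbb{R}$ joining $\{0\}\times\mathopen{]}-\infty,0\mathclose{]}$ to $\{1\}\times\mathopen{[}0,+\infty\mathclose{[}$ under the Poincar\'e map across the next hump again contains three such sub-continua --- is asserted, not proved, and it does not follow by ``mirroring'' Proposition~\ref{prop-2.1}. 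That proposition makes essential use of the fact that the shooting set is a coordinate half-axis: the equilibrium $(0,0)$ belongs to it, Lemmas~\ref{lem-A1.1} and~\ref{lem-A1.5} exploit $(g_{0})$ for initial data of the special form $(0,\delta)$ or $(\nu,0)$ with $\delta,\nu$ small to produce the ``very small'' regime, and Rabinowitz's theorem is applied to data on that axis to produce $\beta^{*}$. A descendant continuum $\Gamma$ has no such structure: its endpoint on $\{0\}\times\mathopen{]}-\infty,0\mathclose{]}$ generically has strictly negative $y$-component, and by Remark~\ref{rem-2.1} the solution emanating from such a point immediately enters the trapping region $x<0$, $y<0$ and never returns to the strip. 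So the small-amplitude regime near $x=0$, which generates two of the three descendants, is simply not available unless one proves that $\Gamma$ clusters at the origin with controlled slope --- precisely the quantitative fact (``$\varepsilon\to0$ as $\mu\to+\infty$'') that the paper only observes numerically.

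There is also the circularity you flag but do not resolve: the thresholds $\mu^{\sharp}$ needed at level $j+1$ would have to be chosen in terms of the geometry of the continua produced at level $j$ (how close they come to the origin, the size of $\nu_{\sigma}$ and $\omega_{\sigma}$ entering \eqref{eq-mu}), and that geometry itself depends on $\mu$. The appendix lemmas give constants depending on the initial data, not uniform bounds over arbitrary continua, so ``iterating the lemma $m-1$ times'' is not licensed by anything proved in the paper. In short, your proposal correctly identifies the intended strategy and the obstacles, but the inductive lemma at its core is an open problem, not a routine adaptation; the statement remains unproved by your argument, consistent with its status in the paper as Conjecture~\ref{con-2}.
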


Lastly, our aim is to intuitively support the above statement. Let us consider a weight term $a(t)$ as in Conjecture~\ref{con-2}. 
From Proposition~\ref{prop-2.1} and Proposition~\ref{prop-2.2}, depending on the boundary conditions assumed, there exist three sub-intervals of $Y_{\geq 0}$ and three sub-intervals of $X_{\mathopen{[}0,1\mathclose{]}}$, respectively, which are deformed through the Poincar\'{e} map $\Phi_{0}^{\kappa}$, with $\kappa=\tau_{2}$, in three continua $\Gamma_{j}$, with $j=1,2,3$, which connect $\{0\}\times\mathopen{]}-\infty,0\mathclose{]}$ with $\{1\}\times\mathopen{]}0,+\infty\mathclose{[}$ (we stress that the proof of these propositions is valid for each $\kappa=\mathopen{]}\sigma_{1},\tau_{2}\mathclose{]}$).

Looking at numerical experiments we can say something more. Firstly, these three continua $\Gamma_{j}$ connect $\{0\}\times\mathopen{]}-\varepsilon,0\mathclose{]}$ with $\{1\}\times\mathopen{]}0,+\infty\mathclose{[}$, where $\varepsilon\to0$ as $\mu\to+\infty$. Secondly, we observe that the deformation of each $\Gamma_{i}$ through the Poincar\'e map $\Phi_{\tau_{2}}^{\sigma_{2}}$ produces a similar configuration of $\Phi_{0}^{\sigma_{1}}(Y_{\geq 0})$ (independently of the boundary condition at $t=0$). 

These properties suggest that, for each $j\in\{1,2,3\}$, the deformation of $\Phi_{\tau_{2}}^{\sigma_{2}}(\Gamma_{j})$ through the Poincar\'{e} map $\Phi_{\sigma_{2}}^{\tau_{3}}$ produces other three continua which connect $\{0\}\times\mathopen{]}-\infty,0\mathclose{]}$ with $\{1\}\times\mathopen{]}0,+\infty\mathclose{[}$. These last configurations are similar to the one of $\Phi_{0}^{\tau_{2}}(Y_{\geq0})$, and so, at this point, one could enter in a inductive procedure scheme. Indeed, by fixing $\kappa$ arbitrarily in the last interval of negativity $\mathopen{[}\sigma_{m-1},\tau_{m}\mathclose{]}$, by induction one should obtain the existence of $3^{m-1}$ subintervals of $Y_{\geq 0}$ or $3^{m-1}$ subintervals of $X_{\mathopen{[}0,1\mathclose{]}}$, respectively, that are then deformed through the Poincar\'{e} map $\Phi_{0}^{\kappa}$ in $3^{m-1}$ continua which connect $\{0\}\times\mathopen{]}-\infty,0\mathclose{]}$ with $\{1\}\times\mathopen{]}0,+\infty\mathclose{[}$. On the other hand, from an application of Proposition~\ref{prop-2.3} and Proposition~\ref{prop-2.4} we obtain $3$ subintervals of $Y_{\leq 0}$ or $3$ subintervals of $X_{\mathopen{[}0,1\mathclose{]}}$, respectively, that are deformed through the Poincar\'{e} map $\Phi_{T}^{\kappa}$ in $3$ continua which connect $\{0\}\times\mathopen{[}0,+\infty\mathclose{[}$ with $\{1\}\times\mathopen{]}-\infty,0\mathclose{[}$. Finally, the intersection points of the continua found above give $3^{m}-1$ positive solutions (not counting the intersection corresponding to the zero solution), where $m$ is the number of positive humps of the weight $a(t)$.

\appendix
\section{Appendix: technical lemmas}\label{appendix-A}

In this appendix, we collect some standard results concerning initial value problems associated with
equation 
\begin{equation}\label{eq-A.1}
u'' + \bigl{(} \lambda a^{+}(t) - \mu a^{-}(t) \bigr{)} g(u) = 0.
\end{equation}
More precisely, we present useful estimates for the solutions to associated Cauchy problems, in analogy with the analysis performed in \cite{BoZa-13,FeSo-NA,GaHaZa-03,GaHaZa-04}.

In the light of the applications in Section~\ref{section-2}, the more convenient environment to present these results is the phase-plane $(x,y)=(u,u')$. Accordingly, we deal with the planar system
\begin{equation*}
\begin{cases}
\, x' = y, \\
\, y' = - \bigl{(} \lambda a^{+}(t) - \mu a^{-}(t) \bigr{)} g(x).
\end{cases}
\leqno{(\mathcal{S}_{\lambda,\mu})}
\end{equation*}

Throughout this section we suppose that $a \colon \mathopen{[}0,T\mathclose{]} \to \mathbb{R}$ is an $L^{1}$-function satisfying $(a_{*})$, and $g \colon \mathopen{[}0,1\mathclose{]} \to \mathbb{R}^{+}$ is a locally Lipschitz continuous function satisfying $(g_{*})$ and $(g_{0})$.
As in Section~\ref{section-2}, we extend the function $g(s)$ continuously to the whole real line, by setting
\begin{equation*}
g(s)= 0, \quad \text{for } \, s\in\mathopen{]}-\infty,0\mathclose{[}\cup\mathopen{]}1,+\infty\mathclose{[}.
\end{equation*}
In this manner, any solution of a Cauchy problem associated with $(\mathcal{S}_{\lambda,\mu})$ is unique and globally defined on $\mathopen{[}0,T\mathclose{]}$.

Lastly, we introduce the following notation
\begin{equation*}
A^{\pm}(t', t''):=\int_{t'}^{t''}a^{\pm}(\xi) \,d\xi, \quad t', t''\in\mathopen{[}0,T\mathclose{]} \, \text{ with } t'\leq t'',
\end{equation*}
and
\begin{equation*}
g_{m}(\eta',\eta'') := \min_{s\in\mathopen{[}\eta',\eta''\mathclose{]}}g(s), \quad \eta',\eta''\in\mathopen{[}0,1\mathclose{]} \, \text{ with } \eta' < \eta''.
\end{equation*}

\subsection{Analysis in $\mathopen{[}0,\sigma\mathclose{]}$}\label{section-A.1}

We deal with system $(\mathcal{S}_{\lambda,\mu})$ in the interval $\mathopen{[}0,\sigma\mathclose{]}$, where it is written as
\begin{equation}\label{IVP-1}
\begin{cases}
\, x' = y, \\
\, y' = - \lambda a^{+}(t) g(x).
\end{cases}
\end{equation}

The following lemmas state that, for $\lambda>0$, the solution $(x(t),y(t))$ of \eqref{IVP-1} with initial conditions $(x(0),y(0))=(x_{0},y_{0})$ satisfies
\begin{itemize}
\item $(x(\sigma),y(\sigma))\in\mathopen{]}1,+\infty\mathclose{[}\times\mathopen{]}0,+\infty\mathclose{[}$, taking $(x_{0},y_{0})=(0,\delta)$ with $\delta>0$ sufficiently large (cf.~Lemma~\ref{lem-A1.2});

\item $(x(t),y(t))\in\mathopen{]}0,1\mathclose{[}\times\mathopen{]}0,+\infty\mathclose{[}$, for all $t\in\mathopen{]}0,\sigma{]}$, taking $(x_{0},y_{0})=(0,\delta)$ with $\delta>0$ sufficiently small (cf.~Lemma~\ref{lem-A1.1});

\item $(x(\sigma),y(\sigma))\in\mathopen{]}-\infty,0\mathclose{]}\times\mathopen{]}-\infty,0\mathclose{[}$, taking $(x_{0},y_{0})=(\nu,0)$ with $\nu\in\mathopen{]}0,1\mathclose{[}$ and $\lambda$ sufficiently large (cf.~Lemma~\ref{lem-A1.3} and Lemma~\ref{lem-A1.4});

\item $(x(t),y(t))$ belongs to a small angular region contained in $\mathopen{]}0,1\mathclose{[}\times\mathopen{]}-\infty,0\mathclose{[}$, for all $t\in\mathopen{]}0,\sigma{]}$, taking $(x_{0},y_{0})=(\nu,0)$ with $\nu>0$ sufficiently small (cf.~Lemma~\ref{lem-A1.5}).
\end{itemize}

\begin{lemma}\label{lem-A1.2}
Let $\lambda>0$. There exists $\tilde{\delta}>0$ such that for every $\delta \geq \tilde{\delta}$ the solution $(x(t),y(t))$ of \eqref{IVP-1} with $(x(0),y(0))=(0,\delta)$ satisfies $(x(\sigma),y(\sigma))\in\mathopen{]}1,+\infty\mathclose{[}\times\mathopen{]}0,+\infty\mathclose{[}$.
\end{lemma}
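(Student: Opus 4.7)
The intuition is simple: if the initial slope $\delta$ at $t=0$ is large enough, the solution leaves the strip $\{0\leq x\leq 1\}$ quickly, and once $x>1$ the extension $g\equiv 0$ outside $[0,1]$ makes the right-hand side of the equation vanish, so $y$ is frozen at a positive value and $x$ grows linearly on the remaining interval up to $\sigma$.

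First I would obtain a uniform lower bound on $y$ as long as the trajectory stays in the strip. Setting
\[
M:=\lambda\,\|a^{+}\|_{L^{1}(0,\sigma)}\max_{s\in\mathopen{[}0,1\mathclose{]}}g(s),
\]
the identity
\[
y(t)=\delta-\lambda\int_{0}^{t}a^{+}(s)\,g(x(s))\,ds
\]
together with $0\leq g(x(s))\leq\max_{[0,1]}g$ whenever $x(s)\in\mathopen{[}0,1\mathclose{]}$ yields $y(t)\geq \delta-M$ on any subinterval $\mathopen{[}0,t\mathclose{]}$ on which $x$ remains in $\mathopen{[}0,1\mathclose{]}$. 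I would then fix $\tilde{\delta}>M+2/\sigma$ and take $\delta\geq\tilde{\delta}$.

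Second I would locate a first crossing time $t^{*}\in\mathopen{]}0,\sigma\mathclose{[}$ with $x(t^{*})=1$. If no such time existed, then $x(t)<1$ throughout $\mathopen{[}0,\sigma\mathclose{]}$, hence $y(t)\geq\delta-M>2/\sigma$, and integrating would give $x(\sigma)\geq 2$, contradicting $x(\sigma)<1$. Hence $t^{*}$ exists, and the same lower bound on $y$ integrated up to $t^{*}$ shows $t^{*}\leq 1/(\delta-M)<\sigma/2$, so in particular $\sigma-t^{*}>0$.

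Finally I would exploit the extension of $g$ beyond $1$ to close the argument. At $t^{*}$ one has $x(t^{*})=1$ and $y(t^{*})\geq\delta-M>2/\sigma>0$, so $x'(t^{*})>0$ and $x$ enters the region $\{x>1\}$ immediately; there $g(x)=0$, so $y'\equiv 0$ on any subinterval on which $x>1$, forcing $y$ to remain equal to $y(t^{*})>0$ and $x$ to grow linearly, which in turn prevents any return to the level $x=1$ on $\mathopen{]}t^{*},\sigma\mathclose{]}$. Consequently
\[
y(\sigma)=y(t^{*})>0,\qquad x(\sigma)=1+y(t^{*})(\sigma-t^{*})>1,
\]
as required. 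There is no real obstacle here: the triviality of the dynamics for $x>1$ (provided by the extension of $g$) reduces everything to the single quantitative choice $\tilde{\delta}>M+2/\sigma$, which simultaneously guarantees that the strip is crossed before time $\sigma$ and that $y$ remains strictly positive throughout.
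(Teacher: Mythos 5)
Your proof is correct, and its core is the same estimate the paper uses: the integral identity $y(t)=\delta-\lambda\int_{0}^{t}a^{+}(\xi)g(x(\xi))\,d\xi$ together with the bound $M:=\lambda\|a^{+}\|_{L^{1}(0,\sigma)}\max_{[0,1]}g$. The difference is that the paper exploits the zero extension of $g$ more aggressively: since $g(s)=0$ for $s<0$ and $s>1$, the inequality $0\leq g(x(\xi))\leq\max_{[0,1]}g$ holds for \emph{all} $\xi$, not only while the trajectory stays in the strip, so $y(t)\geq\delta-M$ on all of $\mathopen{[}0,\sigma\mathclose{]}$ unconditionally; integrating once then gives $x(\sigma)\geq\sigma(\delta-M)>1$ and $y(\sigma)\geq\delta-M>0$ directly, with $\tilde{\delta}>1/\sigma+M$ and no case analysis. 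Your detour through the first crossing time $t^{*}$ and the frozen dynamics in $\{x>1\}$ is valid (it is essentially the trapping-region argument of Remark~2.1), but it is not needed here; it also makes your restriction of the bound to subintervals where $x\in\mathopen{[}0,1\mathclose{]}$ create a tiny gap in the ``no crossing'' contradiction, where you use $y\geq\delta-M$ knowing only $x<1$ but not yet $x\geq0$ --- fixable either by the one-line bootstrap (while $x\in\mathopen{[}0,1\mathclose{]}$ one has $y>0$, so $x$ is increasing and cannot return to $0$) or, more simply, by observing as above that the bound on $g$ is global.
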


\begin{proof}
Let $\lambda>0$ and
\begin{equation}
\tilde{\delta} > \dfrac{1}{\sigma} + \lambda\|a^{+}\|_{L^{1}(0,\sigma)} \max_{s\in\mathopen{[}0,1\mathclose{]}} g(s).
\end{equation}
Let $\delta>\tilde{\delta}$ be fixed and $(x(t),y(t))$ be the solution of \eqref{IVP-1} with initial conditions $(x(0),y(0))=(0,\delta)$.
From
\begin{equation*}
y(t) = \delta - \lambda \int_{0}^{t} a^{+}(\xi)g(x(\xi)) \, d\xi, \quad \text{for all } t\in\mathopen{[}0,\sigma\mathclose{]},
\end{equation*}
we have
\begin{equation*}
x(\sigma) \geq \sigma \delta - \lambda \sigma \|a^{+}\|_{L^{1}(0,\sigma)} \max_{s\in\mathopen{[}0,1\mathclose{]}} g(s) > 1
\end{equation*}
and $y(\sigma)>0$. Then the lemma is proved.
\end{proof}

\begin{lemma}\label{lem-A1.1}
Let $\lambda>0$ and let $\delta_{0}\in\mathopen{]}0,1/\sigma\mathclose{[}$ be such that
\begin{equation*}
\biggl{(}\dfrac{\pi}{2\sigma}\biggr{)}^{2} - \lambda a^{+}(t) \sup_{s\in\mathopen{]}0,\delta_{0}\sigma\mathclose{]}}\dfrac{g(s)}{s} > 0, \quad \text{for a.e.~$t\in\mathopen{[}0,\sigma\mathclose{]}$}.
\end{equation*}
If $(x(t),y(t))$ is the solution of \eqref{IVP-1} with initial conditions $(x(0),y(0))=(0,\delta_{0})$, then $(x(t),y(t))\in\mathopen{]}0,1\mathclose{[}\times\mathopen{]}0,+\infty\mathclose{[}$, for all $t\in\mathopen{]}0,\sigma{]}$.
\end{lemma}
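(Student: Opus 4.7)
The plan is to run a Sturm-type comparison with the linear oscillator $z'' + (\pi/(2\sigma))^{2} z = 0$, using the test function $z(t) := \sin(\pi t/(2\sigma))$, which satisfies $z(0) = 0$, $z(t) > 0$ on $\mathopen{]}0, \sigma\mathclose{]}$, $z'(t) > 0$ on $\mathopen{[}0, \sigma\mathclose{[}$, and $z'(\sigma) = 0$. The final boundary property is the decisive one: it is what allows the comparison to reach the right endpoint $t = \sigma$.

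First I would obtain an \emph{a priori} bound on the orbit. As long as $x(t) \in \mathopen{[}0, 1\mathclose{]}$, we have $y'(t) = -\lambda a^{+}(t) g(x(t)) \leq 0$, so $y(t) \leq y(0) = \delta_{0}$, and therefore $x(t) = \int_{0}^{t} y(s)\, ds \leq \delta_{0} t \leq \delta_{0} \sigma < 1$ (using $\delta_{0} < 1/\sigma$). Arguing by contradiction, I would suppose that $y$ vanishes somewhere in $\mathopen{]}0, \sigma\mathclose{]}$ and let $t^{*}$ be its first zero; then $y > 0$ on $\mathopen{[}0, t^{*}\mathclose{[}$, so $x$ is strictly increasing there, giving $x(t) \in \mathopen{]}0, \delta_{0}\sigma\mathclose{]}$ for every $t \in \mathopen{]}0, t^{*}\mathclose{]}$.

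Next I would introduce the Wronskian-type quantity $W(t) := x(t) z'(t) - y(t) z(t)$. Using $x' = y$, $y' = -\lambda a^{+} g(x)$ and $z'' = -(\pi/(2\sigma))^{2} z$, a direct a.e.\ differentiation yields
\[
W'(t) = z(t)\, x(t) \left[ \lambda a^{+}(t) \frac{g(x(t))}{x(t)} - \left(\frac{\pi}{2\sigma}\right)^{2} \right].
\]
Since $x(t) \in \mathopen{]}0, \delta_{0}\sigma\mathclose{]}$ on $\mathopen{]}0, t^{*}\mathclose{]}$, we may bound $g(x(t))/x(t) \leq \sup_{s \in \mathopen{]}0, \delta_{0}\sigma\mathclose{]}} g(s)/s$, and the standing hypothesis makes the bracket strictly negative for a.e.\ $t \in \mathopen{]}0, t^{*}\mathclose{]}$ (trivially so at points where $a^{+}(t)=0$). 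Combined with $z, x > 0$ on that interval, this gives $W'(t) < 0$ a.e., and since $W(0) = 0$, integration forces $W(t^{*}) < 0$.

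The contradiction arises from evaluating $W$ at $t^{*}$ directly: $W(t^{*}) = x(t^{*}) z'(t^{*}) - y(t^{*}) z(t^{*}) = x(t^{*}) z'(t^{*}) \geq 0$, because $y(t^{*}) = 0$, $z'(t) \geq 0$ on $\mathopen{[}0, \sigma\mathclose{]}$, and $x(t^{*}) > 0$. Thus the supposition fails, and $y > 0$ on $\mathopen{]}0, \sigma\mathclose{]}$; the bound from the first paragraph then gives $x \in \mathopen{]}0, 1\mathclose{[}$ on the same interval, finishing the proof. The main technical subtlety is the borderline case $t^{*} = \sigma$, where both terms of $W(t^{*})$ vanish simultaneously and one naively gets $W(\sigma) = 0$; it is precisely the \emph{strict} a.e.\ inequality in the hypothesis that produces the strict integral inequality $W(\sigma) < 0$ needed to close this case.
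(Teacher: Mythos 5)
Your proof is correct and follows essentially the same route as the paper: a Sturmian comparison with the quarter-period sine, carried out through a Wronskian identity whose derivative has a definite sign thanks to the strict hypothesis on $\lambda a^{+}(t)\sup g(s)/s$. The only (cosmetic) difference is that the paper rescales the frequency to $\omega=\pi/(2t_{0})$ so the boundary term vanishes exactly at the first zero of $y$, whereas you keep the fixed frequency $\pi/(2\sigma)$ and absorb the borderline case $t^{*}=\sigma$ into the strict integral inequality — both work for the same reason.
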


\begin{proof}
Let $\lambda,\delta_{0}$ be fixed as in the statement. Let $(x(t),y(t))$ be the solution of \eqref{IVP-1} with initial conditions $(x(0),y(0))=(0,\delta_{0})$. By the concavity of $x(t)$ and $x(0)=0$, we have that $x(t) \leq \delta_{0} t \leq \delta_{0}\sigma <1$, for all $t\in\mathopen{[}0,\sigma\mathclose{]}$.

We assume by contradiction that there exists $t_{0} \in \mathopen{]}0,\sigma\mathclose{]}$ such that $y(t_{0})=0$, which is the first maximum point of the function $x(t)$. Therefore $0<x(t)\leq \delta_{0}\sigma$, for all $t\in\mathopen{]}0,t_{0}\mathclose{]}$. We define $\omega := \pi / 2t_{0}$, so that $\omega \geq \pi / 2\sigma$.
Next, we obtain
\begin{align*}
0 
&= \Bigl{[} y(t) \sin(\omega t) - \omega x(t) \cos(\omega t) \Bigr{]}_{t=0}^{t= t_{0}}\\
&= \int_{0}^{t_{0}} \biggl{[}\omega^{2} - \lambda a^{+}(\xi) \dfrac{g(x(\xi))}{x(\xi)}\biggr{]} x(\xi) \sin(\omega\xi) \,d\xi >0,
\end{align*}
a contradiction.
Then the lemma is proved.
\end{proof}

\begin{lemma}\label{lem-A1.3}
Let $\nu_{0},\nu_{1}$ be such that $0 < \nu_{1} < \nu_{0} < 1$ and $t_{1} \in\mathopen{]}0,\sigma\mathclose{[}$.
Given 
\begin{equation}\label{eq-lambda1}
\lambda^{\star}(\nu_{0},\nu_{1},t_{1}) := \dfrac{\nu_{0}-\nu_{1}}{g_{m}(\nu_{1},\nu_{0})\int_{0}^{t_{1}} A^{+}(0,\xi) \,d\xi}
\end{equation}
and $0<\gamma_{1} \leq (\nu_{0}-\nu_{1})/t_{1}$, for every $\lambda > \lambda^{\star}(\nu_{0},\nu_{1},t_{1})$, if $(x(t),y(t))$ is the solution of \eqref{IVP-1} with initial conditions $(x(0),y(0))=(\nu_{0},0)$, then $(x(t_{1}),y(t_{1}))\in\mathopen{]}-\infty,\nu_{1}\mathclose{[}\times\mathopen{]}-\infty,-\gamma_{1}\mathclose{[}$.
\end{lemma}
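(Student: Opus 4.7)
The plan is to work directly in the phase plane with the solution $(x(t), y(t))$ of \eqref{IVP-1} with initial data $(x(0), y(0)) = (\nu_{0}, 0)$. The first observation to make is that on $\mathopen{[}0,\sigma\mathclose{]}$ the sign structure is rigid: since $y'(t) = -\lambda a^{+}(t) g(x(t)) \leq 0$, the function $y$ is non-increasing, and because $y(0)=0$ this forces $y(t) \leq 0$, hence $x' = y \leq 0$, so $x$ is non-increasing and in particular $x(t) \leq \nu_{0} < 1$ throughout. Integrating $y' = -\lambda a^{+} g(x)$ once and using Fubini (or integration by parts) yields the key identity
\begin{equation*}
\nu_{0} - x(t_{1}) = -\int_{0}^{t_{1}} y(s)\,ds = \lambda \int_{0}^{t_{1}} (t_{1}-\xi)\, a^{+}(\xi)\, g(x(\xi))\,d\xi,
\end{equation*}
together with the rewriting $\int_{0}^{t_{1}}(t_{1}-\xi) a^{+}(\xi)\,d\xi = \int_{0}^{t_{1}} A^{+}(0,\xi)\,d\xi$ that makes the definition of $\lambda^{\star}(\nu_{0},\nu_{1},t_{1})$ enter naturally.

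The bound $x(t_{1}) < \nu_{1}$ will come by contradiction. If $x(t) \geq \nu_{1}$ for every $t \in \mathopen{[}0,t_{1}\mathclose{]}$, then by monotonicity $x(t) \in \mathopen{[}\nu_{1},\nu_{0}\mathclose{]}$ throughout, so $g(x(t)) \geq g_{m}(\nu_{1},\nu_{0})$ on that interval. Plugging this into the identity above and using the hypothesis $\lambda > \lambda^{\star}(\nu_{0},\nu_{1},t_{1})$ yields
\begin{equation*}
\nu_{0} - x(t_{1}) \geq \lambda\, g_{m}(\nu_{1},\nu_{0}) \int_{0}^{t_{1}} A^{+}(0,\xi)\,d\xi > \nu_{0} - \nu_{1},
\end{equation*}
contradicting $x(t_{1}) \geq \nu_{1}$. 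Hence there is a first $t^{*} \in \mathopen{]}0,t_{1}\mathclose{]}$ with $x(t^{*}) < \nu_{1}$, and because $x$ is non-increasing we get $x(t_{1}) \leq x(t^{*}) < \nu_{1}$.

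For the derivative bound, I would exploit the monotonicity of $y$: since $y$ is non-increasing, $y(s) \geq y(t_{1})$ for all $s \in \mathopen{[}0,t_{1}\mathclose{]}$, so
\begin{equation*}
\nu_{0} - x(t_{1}) = -\int_{0}^{t_{1}} y(s)\,ds \leq -t_{1}\, y(t_{1}).
\end{equation*}
Combining with $\nu_{0} - x(t_{1}) > \nu_{0} - \nu_{1}$ from the previous step and the assumption $\gamma_{1} \leq (\nu_{0}-\nu_{1})/t_{1}$ gives $y(t_{1}) < -(\nu_{0}-\nu_{1})/t_{1} \leq -\gamma_{1}$, finishing the proof.

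There is no real obstacle here: the argument is a one-shot comparison on the interval where $x$ is assumed to stay in $\mathopen{[}\nu_{1},\nu_{0}\mathclose{]}$, and the definition of $\lambda^{\star}$ is reverse-engineered precisely so that the contradiction closes. The only mild subtlety is making sure the continuous extension of $g$ outside $\mathopen{[}0,1\mathclose{]}$ does not interfere, but since $x \leq \nu_{0} < 1$ for $t \in \mathopen{[}0,t_{1}\mathclose{]}$ and the extension sets $g = 0$ elsewhere (so $g \geq 0$ everywhere), the identities and inequalities remain valid without any modification.
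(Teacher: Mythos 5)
Your proof is correct and takes essentially the same route as the paper: the core estimate --- bounding $g(x)\geq g_{m}(\nu_{1},\nu_{0})$ while $x$ stays in $\mathopen{[}\nu_{1},\nu_{0}\mathclose{]}$ and integrating twice so that $\int_{0}^{t_{1}}A^{+}(0,\xi)\,d\xi$ appears --- is exactly the paper's, your Fubini identity being just the integrated form of its differential inequalities. The only difference is organizational: the paper negates the conjunction and handles the case $y(t_{1})\geq-\gamma_{1}$ by reducing it to the case $x(t_{1})\geq\nu_{1}$, whereas you deduce the velocity bound directly from the already-established $x(t_{1})<\nu_{1}$ using the monotonicity of $y$; both hinge on the same condition $\gamma_{1}\leq(\nu_{0}-\nu_{1})/t_{1}$.
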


\begin{proof}
Let $\nu_{0},\nu_{1},t_{1},\gamma_{1}$ and $\lambda^{\star}(\nu_{0},\nu_{1},t_{1})$ be fixed as in the statement. Let $\lambda > \lambda^{\star}(\nu_{0},\nu_{1},t_{1})$ and $(x(t),y(t))$ be the solution of \eqref{IVP-1} with $(x(0),y(0))=(\nu_{0},0)$.

We suppose by contradiction that $(x(t_{1}),y(t_{1}))\notin\mathopen{]}-\infty,\nu_{1}\mathclose{[}\times\mathopen{]}-\infty,-\gamma_{1}\mathclose{[}$. Two possible situations can occur: $x(t_{1}) \geq \nu_{1}$ and $y(t_{1}) \geq -\gamma_{1}$.

First, let $x(t_{1}) \geq \nu_{1}$. The monotonicity of $x(t)$ on $\mathopen{[}0,\sigma\mathclose{]}$ ensures that $0 < \nu_{1} \leq x(t) \leq \nu_{0} < 1$, for all $t\in\mathopen{[}0,t_{1}\mathclose{]}$. Since $y'(t) \leq -\lambda a^{+}(t)g_{m}(\nu_{1},\nu_{0})$ on $\mathopen{[}0,t_{1}\mathclose{]}$, we deduce that $y(t) \leq -\lambda g_{m}(\nu_{1},\nu_{0}) A^{+}(0,t)$ on $\mathopen{[}0,t_{1}\mathclose{]}$. Consequently
\begin{equation*}
x(t) \leq x(0)-\lambda g_{m}(\nu_{1},\nu_{0}) \int_{0}^{t}A^{+}(0,\xi)\,d\xi, \quad \text{for all } \, t\in\mathopen{[}0,t_{1}\mathclose{]},
\end{equation*}
and, since $\lambda > \lambda^{\star}(\nu_{0},\nu_{1},t_{1})$, in particular we have
\begin{equation*}
x(t_{1}) \leq \nu_{0}-\lambda g_{m}(\nu_{1},\nu_{0}) \int_{0}^{t_{1}}A^{+}(0,\xi)\,d\xi < \nu_{1},
\end{equation*}
a contradiction.

Secondly, let $y(t_{1})\geq-\gamma_{1}$, and thus $y(t) \geq -\gamma_{1}$, for all $t\in\mathopen{[}0,t_{1}\mathclose{]}$. By an integration, we have
\begin{equation*}
x(t_{1}) = \nu_{0} + \int_{0}^{t_{1}} y(\xi)\,d\xi \geq \nu_{0}-\gamma_{1} t_{1} \geq \nu_{1}.
\end{equation*}
A contradiction is achieved as above and the thesis follows.
\end{proof}

\begin{lemma}\label{lem-A1.4}
Let $\lambda>0$, $\nu_{1}\in\mathopen{]}0,1\mathclose{[}$ and $t_{1}\in\mathopen{]}0,\sigma\mathclose{[}$. For every $\gamma_{1} \geq \nu_{1}/(\sigma-t_{1})$, if $(x(t),y(t))$ is a solution of \eqref{IVP-1} with $(x(t_{1}),y(t_{1}))\in\mathopen{]}-\infty,\nu_{1}\mathclose{]}\times\mathopen{]}-\infty,-\gamma_{1}\mathclose{]}$, then $(x(\sigma),y(\sigma))\in\mathopen{]}-\infty,0\mathclose{]}\times\mathopen{]}-\infty,-\gamma_{1}\mathclose{]}$.
\end{lemma}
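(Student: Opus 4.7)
The plan is to exploit two very elementary facts about system \eqref{IVP-1} on $\mathopen{[}0,\sigma\mathclose{]}$: first, the monotonicity of $y(t)$, and second, a direct integration of $x'=y$ on $\mathopen{[}t_{1},\sigma\mathclose{]}$ using the uniform upper bound obtained from that monotonicity.

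On $\mathopen{[}0,\sigma\mathclose{]}$ the system reads $y'(t)=-\lambda a^{+}(t)g(x(t))$. Because $g$ is extended continuously by zero on $\mathbb{R}\setminus\mathopen{[}0,1\mathclose{]}$, one has $g(s)\geq 0$ for every $s\in\mathbb{R}$, and by assumption $a^{+}(t)\geq 0$ a.e. Hence $y'(t)\leq 0$ a.e.\ on $\mathopen{[}0,\sigma\mathclose{]}$, so $y$ is non-increasing on the whole interval, independently of the sign of $x(t)$. Combined with the hypothesis $y(t_{1})\leq -\gamma_{1}$, this yields
\begin{equation*}
y(t)\leq y(t_{1})\leq -\gamma_{1}, \quad \text{for every } t\in\mathopen{[}t_{1},\sigma\mathclose{]},
\end{equation*}
which already gives the second half of the conclusion, namely $y(\sigma)\leq -\gamma_{1}$.

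For the first half, I would integrate $x'=y$ on $\mathopen{[}t_{1},\sigma\mathclose{]}$ using the uniform bound just established:
\begin{equation*}
x(\sigma) = x(t_{1}) + \int_{t_{1}}^{\sigma} y(\xi)\,d\xi \leq \nu_{1} - \gamma_{1}(\sigma - t_{1}).
\end{equation*}
The assumption $\gamma_{1}\geq \nu_{1}/(\sigma-t_{1})$ gives $\gamma_{1}(\sigma-t_{1})\geq \nu_{1}$, and therefore $x(\sigma)\leq 0$.

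I do not expect any real obstacle here: both steps are one-line consequences of sign properties. The only subtlety worth flagging is that the monotonicity of $y$ must be invoked \emph{without} any a priori control on $x(t)$ in $\mathopen{[}t_{1},\sigma\mathclose{]}$ (in particular, $x$ may well cross the line $\{x=0\}$ during this interval); the continuous extension $g\equiv 0$ outside $\mathopen{[}0,1\mathclose{]}$ is precisely what makes $y'\leq 0$ hold unconditionally and keeps the argument clean.
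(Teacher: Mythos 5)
Your proof is correct and follows exactly the paper's argument: the sign condition $y'(t)=-\lambda a^{+}(t)g(x(t))\leq 0$ on $\mathopen{[}0,\sigma\mathclose{]}$ gives $y(t)\leq y(t_{1})\leq-\gamma_{1}$ on $\mathopen{[}t_{1},\sigma\mathclose{]}$, and the integration of $x'=y$ together with $\gamma_{1}\geq\nu_{1}/(\sigma-t_{1})$ yields $x(\sigma)\leq 0$. Your remark that the extension $g\equiv 0$ outside $\mathopen{[}0,1\mathclose{]}$ makes the monotonicity of $y$ hold without any control on $x$ is a correct and useful clarification of a point the paper leaves implicit.
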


\begin{proof}
Let $\lambda, \nu_{1}, t_{1}, \gamma_{1}$ be fixed as in the statement.
Let $(x(t),y(t))$ be a solution of \eqref{IVP-1} with $(x(t_{1}),y(t_{1}))\in\mathopen{]}-\infty,\nu_{1}\mathclose{]}\times\mathopen{]}-\infty,-\gamma_{1}\mathclose{]}$.
Since $y'(t)\leq0$ on $\mathopen{[}0,\sigma\mathclose{]}$, we immediately obtain that $y(t) \leq y(t_{1}) \leq -\gamma_{1}$, for all $t\in \mathopen{[}t_{1},\sigma\mathclose{]}$. Next, by an integration, we obtain
\begin{equation*}
x(\sigma) = x(t_{1}) + \int_{t_{1}}^{\sigma} y(\xi) \,d\xi \leq \nu_{1} - \gamma_{1}(\sigma-t_{1}) \leq 0.
\end{equation*}
Then the lemma is proved.
\end{proof}

\begin{lemma}\label{lem-A1.5}
Let $\lambda>0$, $\omega\in\mathopen{]}0,\pi/2\mathclose{[}$ and $\nu_{1}\in\mathopen{]}0,1\mathclose{[}$. Then, there exists $\hat{\varepsilon}=\hat{\varepsilon}(\lambda,\omega)>0$ such that for any $\varepsilon\in\mathopen{]}0,\hat{\varepsilon}\mathclose{[}$ there exists $\nu_{\varepsilon}\in\mathopen{]}0,\nu_{1}\mathclose{[}$ such that the following holds: for any fixed $\nu\in\mathopen{]}0,\nu_{\varepsilon}\mathclose{]}$, the solution $(x(t),y(t))$ of \eqref{IVP-1} with $(x(0),y(0))=(\nu,0)$ satisfies
$x(t)>0$ and $-\tan(\omega) x(t) \leq y(t) \leq 0$, for all $t\in \mathopen{[}0,\sigma\mathclose{]}$.
\end{lemma}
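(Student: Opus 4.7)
My plan would be to exploit the superlinearity of $g$ at the origin given by hypothesis $(g_{0})$ to show that trajectories launched from $(\nu,0)$ with $\nu$ very small cannot build up any significant velocity over the interval $\mathopen{[}0,\sigma\mathclose{]}$. Concretely, I would work with the continuous extension $h\colon\mathopen{[}0,1\mathclose{]}\to\mathbb{R}^{+}$ defined by $h(0):=0$ and $h(s):=g(s)/s$ for $s>0$, whose continuity together with $(g_{0})$ guarantees that the modulus $M(\nu):=\max_{s\in\mathopen{[}0,\nu\mathclose{]}}h(s)$ tends to zero as $\nu\to 0^{+}$. This will be the small parameter driving the argument.

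First I would observe monotonicity: since $g\geq 0$ everywhere (after the extension) and $a^{+}\geq 0$, the equation $y'(t)=-\lambda a^{+}(t)g(x(t))$ gives $y'\leq 0$ on $\mathopen{[}0,\sigma\mathclose{]}$; combined with $y(0)=0$, this forces $y(t)\leq 0$, hence $x'(t)=y(t)\leq 0$, so $x$ is nonincreasing as long as it is positive. On any such subinterval $x(t)\in\mathopen{[}0,\nu\mathclose{]}$, so $g(x(s))=x(s)h(x(s))\leq \nu M(\nu)$, and a direct integration yields the twin a priori bounds
\begin{equation*}
|y(t)|\leq k(\nu)\,\nu, \qquad x(t)\geq \nu\bigl{(}1-\sigma k(\nu)\bigr{)},
\end{equation*}
where $k(\nu):=\lambda\|a^{+}\|_{L^{1}(0,\sigma)}M(\nu)\to 0$ as $\nu\to 0^{+}$.

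Next I would fix $\hat{\varepsilon}=\hat{\varepsilon}(\lambda,\omega)\in\mathopen{]}0,1/\sigma\mathclose{[}$ so small that $\hat{\varepsilon}/(1-\sigma\hat{\varepsilon})<\tan\omega$, and, given $\varepsilon\in\mathopen{]}0,\hat{\varepsilon}\mathclose{[}$, pick $\nu_{\varepsilon}\in\mathopen{]}0,\nu_{1}\mathclose{[}$ with $k(\nu)\leq\varepsilon$ for every $\nu\in\mathopen{]}0,\nu_{\varepsilon}\mathclose{]}$. A standard first-zero argument then rules out $x$ vanishing on $\mathopen{[}0,\sigma\mathclose{]}$: were $t^{\sharp}\in\mathopen{]}0,\sigma\mathclose{]}$ the first such zero, the second estimate (valid by continuity on $\mathopen{[}0,t^{\sharp}\mathclose{]}$) would give $0=x(t^{\sharp})\geq\nu(1-\sigma\varepsilon)>0$, a contradiction. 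With $x>0$ on the whole interval both bounds hold everywhere, and dividing yields $|y(t)|/x(t)\leq\varepsilon/(1-\sigma\varepsilon)<\tan\omega$, which is exactly the claimed angular estimate.

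The only delicate point, which I would single out as the main obstacle, is the mild circularity between the two a priori bounds: the control on $|y|$ uses $x\in\mathopen{[}0,\nu\mathclose{]}$, which rests on $x>0$, which is in turn recovered from the control on $|y|$. This is handled cleanly by the maximal-subinterval / first-zero argument sketched above, which succeeds precisely because the smallness of $k(\nu)$ furnished by $(g_{0})$ makes the critical threshold $1-\sigma k(\nu)$ strictly positive.
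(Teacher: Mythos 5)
Your proof is correct, but it follows a genuinely different route from the paper. The paper proves the lemma with a phase-angle (polar coordinates) argument: on a maximal interval where $x>0$ it sets $x=\rho\cos\vartheta$, $y=\rho\sin\vartheta$, shows $\vartheta'\leq 0$, estimates $-\vartheta'(t)\leq \lambda\|a^{+}\|_{\infty}\varepsilon\cos^{2}(\vartheta)+\sin^{2}(\vartheta)$ using $g(s)\leq\varepsilon s$ near $0$, and integrates this Riccati-type bound to conclude $|\vartheta(t)|\leq\arctan\bigl(\sqrt{\lambda\|a^{+}\|_{\infty}\varepsilon}\,\tan\bigl(\sigma\sqrt{\lambda\|a^{+}\|_{\infty}\varepsilon}\bigr)\bigr)<\omega$, which simultaneously prevents $x$ from vanishing and gives the angular estimate; the threshold $\hat{\varepsilon}(\lambda,\omega)$ is defined by exactly this inequality. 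You instead bound things directly: from $y\leq 0$ and $g(x)\leq\nu M(\nu)$ with $M(\nu)=\max_{[0,\nu]}g(s)/s\to 0$ you get $|y|\leq k(\nu)\nu$ and $x\geq\nu(1-\sigma k(\nu))$ with $k(\nu)=\lambda\|a^{+}\|_{L^{1}(0,\sigma)}M(\nu)$, then close the loop with a first-zero argument and take the quotient. Both arguments are sound; note that the "circularity" you flag is even milder than you suggest, since $y\leq 0$ (hence $x\leq\nu$) holds on all of $\mathopen{[}0,\sigma\mathclose{]}$ unconditionally and $g$ vanishes on negative arguments, so the bound on $|y|$ is in fact global — but your maximal-interval treatment is perfectly rigorous. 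Your version has the advantage of using only $\|a^{+}\|_{L^{1}(0,\sigma)}$, which matches the standing assumption $a\in L^{1}(0,T)$ in $(a_{*})$, whereas the paper's estimate invokes $\|a^{+}\|_{\infty}$ and thus implicitly needs a bounded weight; in your construction $\hat{\varepsilon}$ depends only on $\omega$ and $\sigma$, with the $\lambda$-dependence absorbed into the choice of $\nu_{\varepsilon}$, which is still compatible with the statement. The paper's polar-coordinate computation, on the other hand, is the standard rotation-number style estimate and yields an explicit quantitative bound on the angular displacement, which is the form most readily reused in related shooting arguments.
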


\begin{proof}
Let $\lambda, \omega, \nu_{1}$ be fixed as in the statement.
First of all, we notice that, by hypothesis $(g_{0})$, for all $\varepsilon>0$ there exists $\nu_{\varepsilon}\in\mathopen{]}0,\nu_{1}\mathclose{[}$ such that $g(s) \leq \varepsilon s$, for all $s\in\mathopen{[}0,\delta_{\varepsilon}\mathclose{]}$.
Let $\nu\in\mathopen{]}0,\delta_{\varepsilon}\mathclose{]}$ and let $(x(t),y(t))$ be the solution of \eqref{IVP-1} with $(x(0),y(0))=(\nu,0)$.
Then, we fix $\hat{\varepsilon}=\hat{\varepsilon}(\lambda,\nu)>0$ such that
\begin{equation}\label{eq-epsilon}
\sqrt{\lambda\|a^{+}\|_{\infty}\varepsilon} \tan\bigr{(}\sigma\sqrt{\lambda\|a^{+}\|_{\infty}\varepsilon} \bigr{)} < \tan(\omega), \quad \text{for all } \, \varepsilon\in\mathopen{]}0,\hat{\varepsilon}\mathclose{[}.
\end{equation}

We are going to prove that $x(t)>0$ for all $t \in\mathopen{[}0,\sigma\mathclose{]}$.
By contradiction, we assume the existence of a maximal interval $\mathopen{[}0,\hat{\sigma}\mathclose{[}$ with $\hat{\sigma}\leq\sigma$ such that $x(t)>0$ for all $t \in\mathopen{[}0,\hat{\sigma}\mathclose{[}$.
For convenience, we introduce the polar coordinates
\begin{equation*}
x(t) = \rho(t) \cos(\vartheta(t)), \quad y(t) = \rho(t) \sin(\vartheta(t)),
\end{equation*}
and so
\begin{equation*}
\vartheta(t) = \arctan \biggl{(} \dfrac{y(t)}{x(t)} \biggr{)}, \quad t\in \mathopen{[}0,\hat{\sigma}\mathclose{[}.
\end{equation*}
From $\vartheta(0)=0$ and 
\begin{equation*}
\vartheta'(t) = \dfrac{y'(t)x(t)-x'(t)y(t)}{x^{2}(t)+y^{2}(t)} =
\dfrac{-\lambda a^{+}(t) g(x(t)) x(t) - y^{2}(t)}{\rho^{2}(t)} \leq 0,
\end{equation*}
it follows that $- \pi/2<\vartheta(t) \leq 0$, for all $t\in \mathopen{[}0,\hat{\sigma}\mathclose{[}$.
Furthermore, given $\varepsilon\in\mathopen{]}0,\hat{\varepsilon}\mathclose{[}$, we have
\begin{align*}
-\vartheta'(t) 
&= \dfrac{\lambda a^{+}(t) g(x(t)) x(t) + y^{2}(t)}{\rho^{2}(t)}
\leq \dfrac{\lambda a^{+}(t) \varepsilon x^{2}(t) + y^{2}(t)}{\rho^{2}(t)}
\\ &\leq \lambda \|a^{+}\|_{\infty} \varepsilon \cos^{2}(\vartheta(t)) + \sin^{2}(\vartheta(t)),\quad \text{for all } \, t\in \mathopen{[}0,\hat{\sigma}\mathclose{[}.
\end{align*}
By an integration, for all $t\in \mathopen{[}0,\hat{\sigma}\mathclose{[}$, the following holds 
\begin{align*}
\sigma \geq \hat{\sigma} \geq t =  \int_{0}^{t} d\xi &\geq -\int_{\vartheta(0)}^{\vartheta(t)} \dfrac{d\zeta}{\lambda\|a^{+}\|_{\infty}\varepsilon \cos^{2}(\zeta) + \sin^{2}(\zeta) }
\\&= \int_{\vartheta(t)}^{0} \dfrac{d\zeta}{\cos^{2}(\zeta) \bigr{(}\lambda\|a^{+}\|_{\infty}\varepsilon + \tan^{2}(\zeta) \bigl{)} }
\\ &= -\int_{\tan(\vartheta(t))}^{0} \dfrac{dz}{\lambda\|a^{+}\|_{\infty}\varepsilon + z^{2}}
\\ &= \dfrac{1}{\sqrt{\lambda\|a^{+}\|_{\infty}\varepsilon}}\arctan\biggl{(}\dfrac{\tan|\vartheta(t)|}{\sqrt{\lambda\|a^{+}\|_{\infty}\varepsilon}} \biggr{)}.
\end{align*}
Then, for all $t\in \mathopen{[}0,\hat{\sigma}\mathclose{[}$ we obtain
\begin{equation*}
|\vartheta(t)| \leq \arctan\Bigl{(}\sqrt{\lambda\|a^{+}\|_{\infty}\varepsilon} \tan\bigr{(}\sigma\sqrt{\lambda\|a^{+}\|_{\infty}\varepsilon} \bigr{)}\Bigr{)}
\end{equation*}
and thus $-\nu<\vartheta(t)\leq 0$.
Consequently, the continuity of $\vartheta(t)$ implies $\vartheta(\hat{\sigma})\geq-\nu>-\pi/2$, and so $x(\hat{\sigma})>0$. This contradicts the definition of $\hat{\sigma}$.
Accordingly, $x(t)>0$ for all $t\in\mathopen{[}0,\sigma\mathclose{]}$ and so the thesis follows from the above discussion.
\end{proof}

\subsection{Analysis in $\mathopen{[}\tau,T\mathclose{]}$}\label{section-A.2}

The analysis of system \eqref{IVP-1} in the interval $\mathopen{[}\tau,T\mathclose{]}$ can be performed in analogy with the results given in Section~\ref{section-A.1} due to the particular structure of the problem.
Accordingly, the following lemmas state that, for $\lambda>0$, the solution $(x(t),y(t))$ of \eqref{IVP-1} with initial conditions $(x(T),y(T))=(x_{T},y_{T})$ satisfies
\begin{itemize}
\item $(x(\tau),y(\tau))\in\mathopen{]}1,+\infty\mathclose{[}\times\mathopen{]}-\infty,0\mathclose{[}$, taking $(x_{T},y_{T})=(0,\delta)$ with $\delta>0$ sufficiently large (cf.~Lemma~\ref{lem-A2.2});

\item $(x(t),y(t))\in\mathopen{]}0,1\mathclose{[}\times\mathopen{]}-\infty,0\mathclose{[}$, for all $t\in\mathopen{[}\tau,T{[}$, taking $(x_{T},y_{T})=(0,-\delta)$ with $\delta>0$ sufficiently small (cf.~Lemma~\ref{lem-A2.1});

\item $(x(\tau),y(\tau))\in\mathopen{]}-\infty,0\mathclose{]}\times\mathopen{]}0,+\infty\mathclose{[}$, taking $(x_{T},y_{T})=(\nu,0)$ with $\nu\in\mathopen{]}0,1\mathclose{[}$ and $\lambda$ sufficiently large (cf.~Lemma~\ref{lem-A2.3} and Lemma~\ref{lem-A2.4});

\item $(x(t),y(t))$ belongs to a small angular region contained in $\mathopen{]}0,1\mathclose{[}\times\mathopen{]}0,+\infty\mathclose{[}$, for all $t\in\mathopen{[}\tau,T\mathclose{[}$, taking $(x_{T},y_{T})=(\nu,0)$ with $\nu>0$ sufficiently small (cf.~Lemma~\ref{lem-A2.5}).
\end{itemize}
The corresponding proofs are omitted since they can be adapted straightforward from the ones contained in Section~\ref{section-A.1}.

\begin{lemma}\label{lem-A2.2}
Let $\lambda>0$. There exists $\tilde{\delta}>0$ such that the solution $(x(t),y(t))$ of \eqref{IVP-1} with $(x(T),y(T))=(0,-\tilde{\delta})$ satisfies $(x(\tau),y(\tau))\in\mathopen{]}1,+\infty\mathclose{[}\times\mathopen{]}-\infty,0\mathclose{[}$.
\end{lemma}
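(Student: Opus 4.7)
The plan is to mimic the proof of Lemma~\ref{lem-A1.2} in backward time. On $\mathopen{[}\tau,T\mathclose{]}$ one has $a^{-}\equiv 0$, so \eqref{IVP-1} applies verbatim; the only difference with Lemma~\ref{lem-A1.2} is that the datum is now imposed at the right endpoint $T$ and the conclusion must be read at the left endpoint $\tau$. I would exploit this by integrating the equation backward from $T$ to any $t\in\mathopen{[}\tau,T\mathclose{]}$, using the uniform bound $g(x)\leq M:=\max_{s\in\mathopen{[}0,1\mathclose{]}}g(s)$, which, thanks to the extension of $g$ by zero outside $\mathopen{[}0,1\mathclose{]}$, is valid regardless of the range that $x$ might explore along the trajectory.

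Concretely, from $y'=-\lambda a^{+}(t)g(x)$ and $y(T)=-\tilde{\delta}$, the fundamental theorem of calculus yields
\begin{equation*}
y(t)=-\tilde{\delta}+\lambda\int_{t}^{T}a^{+}(\xi)g(x(\xi))\,d\xi\leq -\tilde{\delta}+\lambda\|a^{+}\|_{L^{1}(\tau,T)}M,
\end{equation*}
for every $t\in\mathopen{[}\tau,T\mathclose{]}$. I would then choose
\begin{equation*}
\tilde{\delta}>\frac{1}{T-\tau}+\lambda\|a^{+}\|_{L^{1}(\tau,T)}M,
\end{equation*}
which forces $y(t)<-1/(T-\tau)<0$ throughout $\mathopen{[}\tau,T\mathclose{]}$. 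Integrating once more the identity $x'=y$ starting from $x(T)=0$, one obtains
\begin{equation*}
x(t)=-\int_{t}^{T}y(\xi)\,d\xi>\frac{T-t}{T-\tau}, \qquad t\in\mathopen{[}\tau,T\mathclose{]},
\end{equation*}
so evaluating at $t=\tau$ gives $x(\tau)>1$ and $y(\tau)<0$, which is the claim.

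A conceptually cleaner alternative is the time-reversal substitution $s:=T-t$, $\tilde{x}(s):=x(T-s)$, $\tilde{y}(s):=-y(T-s)$, $\tilde{a}^{+}(s):=a^{+}(T-s)$. A direct check shows that $(\tilde{x},\tilde{y})$ solves system \eqref{IVP-1} on $\mathopen{[}0,T-\tau\mathclose{]}$ with weight $\tilde{a}^{+}$ and initial datum $(0,\tilde{\delta})$, and the desired conclusion is then precisely the statement of Lemma~\ref{lem-A1.2} translated back to the original variables. I do not foresee any real obstacle here: the only point that requires some care is to keep invoking the extension $g\equiv 0$ outside $\mathopen{[}0,1\mathclose{]}$, so that the crude bound $g(x)\leq M$ continues to hold even if the trajectory temporarily leaves the strip $\mathopen{[}0,1\mathclose{]}\times\mathbb{R}$, a device already systematically used in Section~\ref{section-A.1}.
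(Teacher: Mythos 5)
Your proof is correct and is exactly what the paper intends: the paper omits the proof of Lemma~\ref{lem-A2.2}, stating it is a straightforward adaptation of Lemma~\ref{lem-A1.2}, and both your backward integration with the crude bound $g\leq\max_{[0,1]}g$ and your time-reversal substitution realize precisely that adaptation (the only cosmetic point is that the strict bound $x(t)>(T-t)/(T-\tau)$ holds for $t<T$, which is all you use at $t=\tau$).
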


\begin{lemma}\label{lem-A2.1}
Let $\lambda>0$ and let $\delta_{0}\in\mathopen{]}0,1/(T-\tau)\mathclose{[}$ be such that
\begin{equation*}
\biggl{(}\dfrac{\pi}{2(T-\tau)}\biggr{)}^{2} - \lambda a^{+}(t) \sup_{s\in\mathopen{]}0,\delta_{0}(T-\tau)\mathclose{]}}\dfrac{g(s)}{s} > 0, \quad \text{for a.e.~$t\in\mathopen{[}\tau,T\mathclose{]}$}.
\end{equation*}
If $(x(t),y(t))$ is the solution of \eqref{IVP-1} with initial conditions $(x(T),y(T))=(0,-\delta_{0})$, then $(x(t),y(t))\in\mathopen{]}0,1\mathclose{[}\times\mathopen{]}-\infty,0\mathclose{[}$, for all $t\in\mathopen{[}\tau,T{[}$.
\end{lemma}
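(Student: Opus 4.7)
My plan is to reduce the statement to Lemma~\ref{lem-A1.1} by reversing time. Set
\[
\hat{x}(s) := x(T-s), \qquad \hat{y}(s) := -y(T-s), \qquad \hat{a}^{+}(s) := a^{+}(T-s),
\]
for $s \in \mathopen{[}0, T-\tau\mathclose{]}$. A direct differentiation shows that $(\hat{x},\hat{y})$ solves
\[
\hat{x}'(s) = \hat{y}(s), \qquad \hat{y}'(s) = -\lambda\,\hat{a}^{+}(s)\,g(\hat{x}(s)),
\]
with initial data $\hat{x}(0) = 0$ and $\hat{y}(0) = -y(T) = \delta_{0}$. Thus the pair $(\hat{x},\hat{y})$ satisfies a system of exactly the form \eqref{IVP-1} on $\mathopen{[}0, T-\tau\mathclose{]}$, with the reflected (still nonnegative, $L^{1}$) weight $\hat{a}^{+}$ in place of $a^{+}$.

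Next, I would verify that the standing smallness assumption translates faithfully. Under the substitution $t = T-s$, the assumed inequality
\[
\biggl{(}\dfrac{\pi}{2(T-\tau)}\biggr{)}^{2} - \lambda a^{+}(t)\sup_{u\in\mathopen{]}0,\delta_{0}(T-\tau)\mathclose{]}}\dfrac{g(u)}{u} > 0 \quad \text{for a.e.~$t\in\mathopen{[}\tau,T\mathclose{]}$}
\]
is exactly the hypothesis of Lemma~\ref{lem-A1.1} applied to $(\hat{x},\hat{y})$ on $\mathopen{[}0, T-\tau\mathclose{]}$ with $\sigma$ replaced by $T-\tau$ and the same $\delta_{0}$. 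That lemma then yields $\hat{x}(s) \in \mathopen{]}0,1\mathclose{[}$ and $\hat{y}(s) > 0$ for every $s \in \mathopen{]}0, T-\tau\mathclose{]}$. Undoing the substitution gives $x(t) \in \mathopen{]}0,1\mathclose{[}$ and $y(t) < 0$ for every $t \in \mathopen{[}\tau, T\mathclose{[}$, which is exactly the claim.

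\textbf{Alternative route and main obstacle.} If an \emph{in situ} argument is preferred, the same two ingredients that drive Lemma~\ref{lem-A1.1} work here: concavity of $x$ on $\mathopen{[}\tau,T\mathclose{]}$ combined with the tangent-line estimate $x(t) \leq x(T) + x'(T)(t-T) = \delta_{0}(T-t) \leq \delta_{0}(T-\tau) < 1$ bounds $x$ in $\mathopen{[}0,1\mathclose{[}$; then, by contradiction, one takes the largest $t_{0} \in \mathopen{[}\tau, T\mathclose{[}$ with $y(t_{0}) = 0$, sets $\omega = \pi/(2(T-t_{0})) \geq \pi/(2(T-\tau))$, and evaluates $\bigl{[}y(t)\sin(\omega(T-t)) + \omega x(t)\cos(\omega(T-t))\bigr{]}_{t_{0}}^{T}$ to extract a Sturm-type contradiction from the standing hypothesis. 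The only real difficulty is sign bookkeeping: one must keep careful track of the reflection $y \mapsto -y$ (equivalently, of the orientation of the test function $\sin(\omega(T-t))$) and verify that the range of $x$ on $\mathopen{[}\tau,T\mathclose{]}$ does indeed lie inside $\mathopen{]}0,\delta_{0}(T-\tau)\mathclose{]}$, where the supremum in the hypothesis is taken. No analytic ingredient beyond Lemma~\ref{lem-A1.1} is required.
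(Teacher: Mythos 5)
Your proof is correct and coincides with the paper's intended argument: the paper omits the proofs in Section~\ref{section-A.2}, stating they are straightforward adaptations of those in Section~\ref{section-A.1}, and your time-reversal substitution $s=T-t$, $(\hat{x},\hat{y})=(x(T-\cdot),-y(T-\cdot))$ reducing the claim to Lemma~\ref{lem-A1.1} (with $\sigma$ replaced by $T-\tau$ and the reflected weight) is exactly that adaptation. Your in-situ alternative with the test bracket $y(t)\sin(\omega(T-t))+\omega x(t)\cos(\omega(T-t))$ also checks out, so no gap remains.
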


\begin{lemma}\label{lem-A2.3}
Let $\nu_{1},\nu_{T}$ be such that $0 < \nu_{1} < \nu_{T} < 1$ and $t_{1}\in\mathopen{]}\tau,T\mathclose{[}$. 
Given 
\begin{equation}\label{eq-lambda2}
\lambda^{\star\star}(\nu_{1},\nu_{T},t_{1}) := \dfrac{\nu_{T}-\nu_{1}}{g_{m}(\nu_{1},\nu_{T})\int_{t_{1}}^{T} A^{+}(\xi,T) \,d\xi}
\end{equation}
and $0<\gamma_{1}\leq(\nu_{T}-\nu_{1})/(T-t_{1})$, for every $\lambda > \lambda^{\star\star}(\nu_{1},\nu_{T},t_{1})$, if $(x(t),y(t))$ is the solution of \eqref{IVP-1} with initial conditions $(x(T),y(T))=(\nu_{T},0)$, then $(x(t_{1}),y(t_{1}))\in\mathopen{]}-\infty,\nu_{1}\mathclose{[}\times\mathopen{]}\gamma_{1},+\infty\mathclose{[}$.
\end{lemma}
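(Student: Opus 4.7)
The plan is to mirror the proof of Lemma~\ref{lem-A1.3} under time reversal. On $\mathopen{[}\tau,T\mathclose{]}$ the differential equation reduces to $y' = -\lambda a^{+}(t) g(x)$, so $y'(t) \leq 0$ whenever $x(t)\in\mathopen{[}0,1\mathclose{]}$. Starting from $y(T)=0$ and running backward, this yields $y(t)\geq 0$ on $\mathopen{[}\tau,T\mathclose{]}$, and since $x'=y\geq 0$, the function $x(t)$ is non-decreasing on $\mathopen{[}\tau,T\mathclose{]}$ with $x(t)\leq x(T) = \nu_{T} < 1$. This monotonicity is the structural analogue of the concavity used on $\mathopen{[}0,\sigma\mathclose{]}$ in Lemma~\ref{lem-A1.3}.

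I would then argue by contradiction: suppose $(x(t_{1}),y(t_{1})) \notin \mathopen{]}-\infty,\nu_{1}\mathclose{[}\times\mathopen{]}\gamma_{1},+\infty\mathclose{[}$. Two alternatives can occur, namely $x(t_{1}) \geq \nu_{1}$ or $y(t_{1}) \leq \gamma_{1}$. The second one reduces to the first: if $y(t_{1})\leq \gamma_{1}$, the non-increasing character of $y$ gives $y(t)\leq \gamma_{1}$ for all $t\in\mathopen{[}t_{1},T\mathclose{]}$, hence
\begin{equation*}
\nu_{T} - x(t_{1}) = \int_{t_{1}}^{T} y(\xi)\,d\xi \leq \gamma_{1}(T-t_{1}) \leq \nu_{T}-\nu_{1},
\end{equation*}
so that $x(t_{1})\geq \nu_{1}$.

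It then suffices to rule out $x(t_{1})\geq \nu_{1}$. By monotonicity one has $\nu_{1} \leq x(t) \leq \nu_{T}$ for all $t\in\mathopen{[}t_{1},T\mathclose{]}$, whence $g(x(t))\geq g_{m}(\nu_{1},\nu_{T})$. Integrating $y'(t) \leq -\lambda a^{+}(t) g_{m}(\nu_{1},\nu_{T})$ from $t$ to $T$ and using $y(T)=0$ gives $y(t) \geq \lambda g_{m}(\nu_{1},\nu_{T}) A^{+}(t,T)$ on $\mathopen{[}t_{1},T\mathclose{]}$. Integrating once more,
\begin{equation*}
\nu_{T} - x(t_{1}) = \int_{t_{1}}^{T} y(\xi)\,d\xi \geq \lambda\, g_{m}(\nu_{1},\nu_{T}) \int_{t_{1}}^{T} A^{+}(\xi,T)\,d\xi,
\end{equation*}
and the assumption $\lambda > \lambda^{\star\star}(\nu_{1},\nu_{T},t_{1})$ produces $\nu_{T}-x(t_{1}) > \nu_{T} - \nu_{1}$, that is $x(t_{1}) < \nu_{1}$, contradicting $x(t_{1})\geq \nu_{1}$.

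There is no genuine obstacle here; the only care needed is bookkeeping with the time reversal, namely checking that the correct sign of $y$, the correct direction of monotonicity of $x$, and the integration limits $\mathopen{[}t_{1},T\mathclose{]}$ (with the integrand $A^{+}(\xi,T)$ rather than $A^{+}(0,\xi)$) all line up as in the definition of $\lambda^{\star\star}(\nu_{1},\nu_{T},t_{1})$.
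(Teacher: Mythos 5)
Your argument is correct and is precisely the adaptation the paper intends: the proof of Lemma~\ref{lem-A2.3} is omitted there as a ``straightforward'' time-reversed version of Lemma~\ref{lem-A1.3}, and your bookkeeping (sign of $y$, monotonicity of $x$, integration over $\mathopen{[}t_{1},T\mathclose{]}$ with $A^{+}(\xi,T)$, and the reduction of the case $y(t_{1})\leq\gamma_{1}$ to $x(t_{1})\geq\nu_{1}$) matches that scheme exactly.
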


\begin{lemma}\label{lem-A2.4}
Let $\lambda>0$, $\nu_{1}\in\mathopen{]}0,1\mathclose{[}$ and $t_{1}\in\mathopen{]}\tau,T\mathclose{[}$. 
For every $\gamma_{1} \geq \nu_{1}/(t_{1}-\tau)$, if $(x(t),y(t))$ is a solution of \eqref{IVP-1} with $(x(t_{1}),y(t_{1}))\in\mathopen{]}-\infty,\nu_{1}\mathclose{]}\times\mathopen{[}\gamma_{1},+\infty\mathclose{]}$, then $(x(\tau),y(\tau))\in\mathopen{]}-\infty,0\mathclose{]}\times\mathopen{[}\gamma_{1},+\infty\mathclose{[}$.
\end{lemma}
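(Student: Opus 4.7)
The plan is to mirror exactly the argument of Lemma~\ref{lem-A1.4}, exploiting the fact that on the interval $\mathopen{[}\tau,T\mathclose{]}$ the weight $a(t)$ coincides with $a^{+}(t)\geq 0$ (by hypothesis~$(a_{*})$), so that along any solution of \eqref{IVP-1} the second component $y(t)$ is non-increasing. The only genuine change with respect to Lemma~\ref{lem-A1.4} is that now one has to integrate \emph{backwards} in time from $t_{1}$ down to $\tau$ instead of forwards from $t_{1}$ up to $\sigma$; this reverses the direction of several inequalities in the obvious way.

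More concretely, first I would observe that for every $t\in\mathopen{[}\tau,T\mathclose{]}$ one has
\begin{equation*}
y'(t) = -\lambda a^{+}(t) g(x(t)) \leq 0,
\end{equation*}
because $g\geq 0$ and $a=a^{+}$ on $\mathopen{[}\tau,T\mathclose{]}$. Therefore $y$ is monotone non-increasing on the whole interval $\mathopen{[}\tau,T\mathclose{]}$ and, in particular, for every $t\in\mathopen{[}\tau,t_{1}\mathclose{]}$ we get $y(t)\geq y(t_{1})\geq \gamma_{1}$. This immediately yields the second coordinate required in the thesis, namely $y(\tau)\geq \gamma_{1}$.

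Then I would obtain the estimate on $x(\tau)$ by a direct integration of $x' = y$ backwards from $t_{1}$ to $\tau$:
\begin{equation*}
x(\tau) = x(t_{1}) - \int_{\tau}^{t_{1}} y(\xi)\,d\xi \leq \nu_{1} - \gamma_{1}(t_{1}-\tau) \leq 0,
\end{equation*}
where the first inequality uses $x(t_{1})\leq \nu_{1}$ together with the pointwise lower bound $y(\xi)\geq \gamma_{1}$ already established, and the second inequality is precisely the standing assumption $\gamma_{1}\geq \nu_{1}/(t_{1}-\tau)$. This completes the argument.

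No step should be an obstacle: the lemma is a purely kinematic statement that only relies on the sign of $a$ on $\mathopen{[}\tau,T\mathclose{]}$ and on monotonicity in time. The one detail worth stating carefully is the time-reversal (backward integration and the consequent flipping of the inequality $y\geq\gamma_{1}$ on $\mathopen{[}\tau,t_{1}\mathclose{]}$), which is exactly the symmetric counterpart of the monotonicity $y(t)\leq y(t_{1})$ on $\mathopen{[}t_{1},\sigma\mathclose{]}$ used in Lemma~\ref{lem-A1.4}.
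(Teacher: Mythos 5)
Your proof is correct and is precisely the argument the paper intends: the paper omits the proof of Lemma~\ref{lem-A2.4}, stating it can be adapted straightforwardly from Lemma~\ref{lem-A1.4}, and your backward-in-time version (using $y'\leq 0$ on $\mathopen{[}\tau,T\mathclose{]}$ to get $y(t)\geq y(t_{1})\geq\gamma_{1}$ on $\mathopen{[}\tau,t_{1}\mathclose{]}$, then integrating $x'=y$ from $\tau$ to $t_{1}$) is exactly that adaptation.
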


\begin{lemma}\label{lem-A2.5}
Let $\lambda>0$, $\omega\in\mathopen{]}0,\pi/2\mathclose{[}$ and $\nu_{1}\in\mathopen{]}0,1\mathclose{[}$. Then, there exists $\hat{\varepsilon}=\hat{\varepsilon}(\lambda,\omega)>0$ such that for any $\varepsilon\in\mathopen{]}0,\hat{\varepsilon}\mathclose{[}$ there exists $\nu_{\varepsilon}\in\mathopen{]}0,\nu_{1}\mathclose{[}$ such that the following holds: for any fixed $\nu\in\mathopen{]}0,\nu_{\varepsilon}\mathclose{]}$, the solution $(x(t),y(t))$ of \eqref{IVP-1} with $(x(T),y(T))=(\nu,0)$ satisfies
$x(t)>0$ and $0 \leq y(t) \leq \tan(\omega) \, x(t)$ for all $t\in \mathopen{[}\tau,T\mathclose{]}$.
\end{lemma}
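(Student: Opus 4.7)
The plan is to mirror the proof of Lemma~\ref{lem-A1.5} with the time direction reversed, exploiting the symmetry that the dynamical structure on $\mathopen{[}\tau,T\mathclose{]}$ integrated backward from $T$ is formally analogous to the structure on $\mathopen{[}0,\sigma\mathclose{]}$ integrated forward from $0$.

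First I would invoke $(g_{0})$ to choose, for any given $\varepsilon>0$, a number $\nu_{\varepsilon} \in \mathopen{]}0,\nu_{1}\mathclose{[}$ with $g(s) \leq \varepsilon s$ on $\mathopen{[}0,\nu_{\varepsilon}\mathclose{]}$; then fix $\nu \in \mathopen{]}0,\nu_{\varepsilon}\mathclose{]}$ and let $(x(t),y(t))$ be the solution of \eqref{IVP-1} with terminal datum $(x(T),y(T))=(\nu,0)$. On $\mathopen{[}\tau,T\mathclose{]}$ we have $y'(t)=-\lambda a^{+}(t) g(x(t)) \leq 0$, so $y$ is non-increasing in forward time; combined with $y(T)=0$ this yields $y(t)\geq 0$, and then $x'(t)=y(t) \geq 0$ forces $x(t)\leq x(T)=\nu \leq \nu_{\varepsilon}$ on any subinterval of $\mathopen{[}\tau,T\mathclose{]}$ where $x$ remains positive. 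Now I argue by contradiction, assuming there is a maximal interval $\mathopen{]}\hat{\tau},T\mathclose{]}$ with $\hat{\tau}\geq\tau$ on which $x(t)>0$, and introduce polar coordinates $x=\rho\cos\vartheta$, $y=\rho\sin\vartheta$. A direct computation identical to the one in Lemma~\ref{lem-A1.5} gives
\begin{equation*}
\vartheta'(t) = \frac{-\lambda a^{+}(t) g(x(t)) x(t) - y^{2}(t)}{\rho^{2}(t)} \leq 0,
\end{equation*}
so $\vartheta$ is non-increasing forward in time, and together with $\vartheta(T)=0$ this yields $0\leq\vartheta(t)<\pi/2$ on $\mathopen{]}\hat{\tau},T\mathclose{]}$.

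Next, using $g(x(t)) \leq \varepsilon x(t)$, I would bound
\begin{equation*}
-\vartheta'(t) \leq \lambda\|a^{+}\|_{\infty} \varepsilon \cos^{2}\vartheta(t) + \sin^{2}\vartheta(t),
\end{equation*}
and integrate from $t$ up to $T$. The change of variable $\zeta=\vartheta(\xi)$ together with the primitive $\int d\zeta/(\alpha\cos^{2}\zeta+\sin^{2}\zeta)=\alpha^{-1/2}\arctan(\tan\zeta/\sqrt{\alpha})$ gives
\begin{equation*}
\tan\vartheta(t) \leq \sqrt{\lambda\|a^{+}\|_{\infty}\varepsilon}\,\tan\!\bigl((T-\tau)\sqrt{\lambda\|a^{+}\|_{\infty}\varepsilon}\bigr),
\end{equation*}
which is the exact analogue of \eqref{eq-epsilon} with $\sigma$ replaced by $T-\tau$.

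Finally I would pick $\hat{\varepsilon}=\hat{\varepsilon}(\lambda,\omega)>0$ so small that for every $\varepsilon \in \mathopen{]}0,\hat{\varepsilon}\mathclose{[}$ the right-hand side above is strictly less than $\tan\omega$; this is possible because $\sqrt{\lambda\|a^{+}\|_{\infty}\varepsilon}\,\tan((T-\tau)\sqrt{\lambda\|a^{+}\|_{\infty}\varepsilon}) \to 0$ as $\varepsilon\to 0^{+}$. Then $\vartheta(t)\leq\omega<\pi/2$ on $\mathopen{]}\hat{\tau},T\mathclose{]}$, so $\cos\vartheta$ is bounded away from zero, hence $x(\hat{\tau})>0$ by continuity, contradicting the maximality of $\hat{\tau}$ unless $\hat{\tau}=\tau$. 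On all of $\mathopen{[}\tau,T\mathclose{]}$ one then has $x(t)>0$ and $0\leq y(t)/x(t)=\tan\vartheta(t)\leq\tan\omega$, which is the claim. The main obstacle is organizational rather than substantive: one must carefully verify that the reversed time direction keeps $\vartheta$ in $\mathopen{[}0,\pi/2\mathclose{[}$ rather than in $\mathopen{]}-\pi/2,0\mathclose{]}$ as in Lemma~\ref{lem-A1.5}, so that the integral estimate and the choice of $\hat{\varepsilon}$ translate verbatim to the new configuration.
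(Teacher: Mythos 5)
Your proposal is correct and is essentially the paper's own argument: the paper omits the proof of Lemma~\ref{lem-A2.5}, stating it is a straightforward adaptation of Lemma~\ref{lem-A1.5}, and you carry out precisely that time-reversed adaptation (with $\vartheta$ now confined to $\mathopen{[}0,\pi/2\mathclose{[}$ and $\sigma$ replaced by $T-\tau$ in the choice of $\hat{\varepsilon}$). The minor implicit points you inherit (e.g.\ using $\|a^{+}\|_{\infty}$ and tacitly taking $\rho(\hat{\tau})>0$, which follows from uniqueness since $(0,0)$ is an equilibrium) are present in the paper's proof of Lemma~\ref{lem-A1.5} as well, so nothing further is needed.
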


\subsection{Analysis in $\mathopen{[}\sigma,\tau\mathclose{]}$}\label{section-A.3}

We deal with system $(\mathcal{S}_{\lambda,\mu})$ in the interval $\mathopen{[}\sigma,\tau\mathclose{]}$, where it is written as
\begin{equation}\label{IVP-2}
\begin{cases}
\, x' = y, \\
\, y' = \mu a^{-}(t) g(x).
\end{cases}
\end{equation}
For $\mu>0$, we focus our attention on the solutions to Cauchy problems associated with \eqref{IVP-2} at a given point $\kappa\in\mathopen{]}\sigma,\tau\mathclose{[}$.

The following lemmas describe the dynamics on the two subintervals $\mathopen{[}\sigma,\kappa\mathclose{]}$ and $\mathopen{[}\kappa,\tau\mathclose{]}$. More precisely, we prove the following.
\begin{itemize}
\item The solution $(x(t),y(t))$ of \eqref{IVP-2} with initial conditions $(x(\sigma),y(\sigma))\in\mathopen{]}0,1\mathclose{[}\times\mathbb{R}$ satisfies $(x(\kappa),y(\kappa))\in\mathopen{[}1,+\infty\mathclose{[}\times\mathopen{]}0,+\infty\mathclose{[}$ for $\mu$ sufficiently large (cf.~Lemma~\ref{lem-A3.1} and Lemma~\ref{lem-A3.2}).

\item The solution $(x(t),y(t))$ of \eqref{IVP-2} with initial conditions $(x(\tau),y(\tau))\in\mathopen{]}0,1\mathclose{[}\times\mathbb{R}$ satisfies $(x(\kappa),y(\kappa))\in\mathopen{[}1,+\infty\mathclose{[}\times\mathopen{]}-\infty,0\mathclose{[}$ for $\mu$ sufficiently large (cf.~Lemma~\ref{lem-A3.3} and Lemma~\ref{lem-A3.4}).
\end{itemize}

\begin{lemma}\label{lem-A3.1}
Let $\nu_{\sigma},\nu_{2}$ be such that $0 < \nu_{\sigma} < \nu_{2} < 1$ and $\omega_{\sigma} > 0$.
Given 
\begin{equation*}
\sigma< t_{2} \leq \min\biggl{\{}\sigma + \dfrac{\nu_{\sigma}}{2\omega_{\sigma}},\kappa \biggr{\}}, \quad 0 < \omega \leq \dfrac{\nu_{2}-\nu_{\sigma}}{t_{2}-\sigma},
\end{equation*} 
and
\begin{equation}\label{eq-mu}
\mu^{\star}(\nu_{2},\nu_{\sigma},t_{2},\omega_{\sigma}) := \dfrac{\nu_{2} - \nu_{\sigma} + (t_{2}-\sigma) \omega_{\sigma}}{g_{m}(\nu_{\sigma}/2,\nu_{2}) \int_{\sigma}^{t_{2}} A^{-}(\sigma,\xi) \,d\xi},
\end{equation}
for every $\mu>\mu^{\star}(\nu_{2},\nu_{\sigma},t_{2},\omega_{\sigma})$, if $(x(t),y(t))$ is a solution of \eqref{IVP-2} with $(x(\sigma),y(\sigma)) \in \{\nu_{\sigma}\}\times\mathopen{[}-\omega_{\sigma},+\infty\mathclose{[}$, then $(x(t_{2}),y(t_{2})) \in \mathopen{]}\nu_{2},+\infty\mathclose{[} \times \mathopen{]}\omega,+\infty\mathclose{[}$.
\end{lemma}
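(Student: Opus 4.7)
The plan is to work in the sub-interval $[\sigma, t_2]$, where system \eqref{IVP-2} reads $x' = y$, $y' = \mu a^{-}(t) g(x)$ with $a^{-}(t) \geq 0$ and $g(x) \geq 0$ (using the trivial extension of $g$ outside $[0,1]$). The crucial structural fact is that $y'(t) \geq 0$, so $y$ is non-decreasing on $[\sigma, t_2]$; in particular $y(t) \geq y(\sigma) \geq -\omega_\sigma$ throughout, and integrating $x' = y$ gives the a priori lower bound
\[
x(t) \geq \nu_\sigma - \omega_\sigma(t - \sigma) \geq \nu_\sigma - \omega_\sigma(t_2 - \sigma) \geq \nu_\sigma/2,
\]
by the hypothesis $t_2 - \sigma \leq \nu_\sigma/(2\omega_\sigma)$.

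The main step is to argue by contradiction that the trajectory must leave the strip $\{x \leq \nu_2\}$ by time $t_2$. Suppose $x(t) \leq \nu_2$ for every $t \in [\sigma, t_2]$. Combined with the bound above, we then have $x(t) \in [\nu_\sigma/2, \nu_2]$ throughout, so $g(x(t)) \geq g_m(\nu_\sigma/2, \nu_2) > 0$. Integrating $y'$ once yields $y(t) \geq -\omega_\sigma + \mu\, g_m(\nu_\sigma/2, \nu_2)\, A^{-}(\sigma, t)$, and a second integration produces
\[
x(t_2) \geq \nu_\sigma - \omega_\sigma(t_2 - \sigma) + \mu\, g_m(\nu_\sigma/2, \nu_2) \int_\sigma^{t_2} A^{-}(\sigma, \xi)\, d\xi.
\]
Plugging in $\mu > \mu^{\star}$ from \eqref{eq-mu} shows the right-hand side strictly exceeds $\nu_2$, contradicting the contrary hypothesis.

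From this I obtain a first time $T_0 \in (\sigma, t_2]$ at which $x(T_0) = \nu_2$. To promote this to a strict inequality at $t_2$, I would rule out $y(T_0) \leq 0$: if $y(T_0) \leq 0$, monotonicity of $y$ gives $y \leq 0$ on $[\sigma, T_0]$, so $x$ is non-increasing and $x(T_0) \leq \nu_\sigma < \nu_2$, a contradiction. Hence $y(T_0) > 0$; since $y$ is non-decreasing, $x'(t) = y(t) \geq y(T_0) > 0$ on $[T_0, t_2]$, which forces $T_0 < t_2$ and $x(t_2) > \nu_2$. The lower bound on $y(t_2)$ then follows from the trivial inequality $\int_\sigma^{t_2} y(\xi)\, d\xi \leq y(t_2)(t_2 - \sigma)$: indeed
\[
y(t_2) \geq \frac{x(t_2) - \nu_\sigma}{t_2 - \sigma} > \frac{\nu_2 - \nu_\sigma}{t_2 - \sigma} \geq \omega,
\]
where the last inequality is exactly the standing assumption on $\omega$.

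The main technical point I anticipate is the bridge from the integral contradiction, which only locates a crossing time $T_0 \leq t_2$, to the pointwise conclusion $x(t_2) > \nu_2$: one has to exclude the scenario in which $x$ briefly overshoots $\nu_2$ and then dips back below it, and this is precisely where the monotonicity of $y$ (inherited from $y' \geq 0$) together with the short contradiction argument securing $y(T_0) > 0$ becomes indispensable. The remainder reduces to routine integral bookkeeping combined with the explicit choice of the threshold $\mu^{\star}$.
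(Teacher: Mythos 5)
Your proof is correct, and its quantitative core coincides with the paper's: the a priori bound $x(t)\ge\nu_{\sigma}/2$ obtained from $y\ge-\omega_{\sigma}$ and $t_{2}-\sigma\le\nu_{\sigma}/(2\omega_{\sigma})$, the lower bound $g(x)\ge g_{m}(\nu_{\sigma}/2,\nu_{2})$, and the double integration of $y'=\mu a^{-}(t)g(x)$ against the threshold $\mu^{\star}$. The surrounding logic is organized differently. The paper negates the target rectangle and splits into the two cases $x(t_{2})\le\nu_{2}$ and $y(t_{2})\le\omega$: in the first it uses convexity of $x$ (both endpoint values are $\le\nu_{2}$) to propagate $x\le\nu_{2}$ over all of $\mathopen{[}\sigma,t_{2}\mathclose{]}$ before applying the integral estimate, and it reduces the second case to the first via $x(t_{2})\le\nu_{\sigma}+\omega(t_{2}-\sigma)\le\nu_{2}$. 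You instead show that the trajectory must exit the strip $\{x\le\nu_{2}\}$, then use the first crossing time $T_{0}$ and the monotonicity of $y$ to conclude $x(t_{2})>\nu_{2}$, and you deduce $y(t_{2})>\omega$ directly from $x(t_{2})-\nu_{\sigma}=\int_{\sigma}^{t_{2}}y\,d\xi\le(t_{2}-\sigma)y(t_{2})$ rather than through a separate case; the two routes are of equal strength, yours trading the convexity/two-case bookkeeping for a crossing-time argument. One small repair: $T_{0}<t_{2}$ is not ``forced'' by $x'>0$ on $\mathopen{[}T_{0},t_{2}\mathclose{]}$ (that statement is vacuous when $T_{0}=t_{2}$); it follows instead because your contradiction step yields a point $t^{*}\le t_{2}$ with $x(t^{*})>\nu_{2}$ strictly, so the first crossing of the level $\nu_{2}$ occurs strictly before $t^{*}$, hence before $t_{2}$ (equivalently, if $T_{0}=t_{2}$ then $x\le\nu_{2}$ on all of $\mathopen{[}\sigma,t_{2}\mathclose{]}$ and your integral estimate gives a contradiction once more).
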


\begin{proof}
Let $\nu_{\sigma},\nu_{2},\omega_{\sigma},t_{2},\omega$ and $\mu^{\star}(\nu_{2},\nu_{\sigma},t_{2},\omega_{\sigma})$ be fixed as in the statement.
For $\mu>\mu^{\star}(\nu_{2},\nu_{\sigma},t_{2},\omega_{\sigma})$, let $(x(t),y(t))$ be a solution of \eqref{IVP-2} with $(x(\sigma),y(\sigma)) \in \{\nu_{\sigma}\}\times\mathopen{[}-\omega_{\sigma},+\infty\mathclose{[}$.

We suppose by contradiction that $(x(t_{2}),y(t_{2})) \notin \mathopen{]}\nu_{2},+\infty\mathclose{[} \times \mathopen{]}\omega,+\infty\mathclose{[}$. Two possible situations can occur: $x(t_{2}) \leq \nu_{2}$ and $y(t_{2}) \leq \omega$.

First, let $x(t_{2})\leq\nu_{2}$. Recalling that the function $x(t)$ is convex in $\mathopen{[}\sigma,\kappa\mathclose{]}$ and that $\nu_{2}>\nu_{\sigma}$, we obtain that 
$x(t) \leq \nu_{2}$, for all $t\in \mathopen{[}\sigma,t_{2}\mathclose{]}$.
We notice that $y'(t)\geq0$ on $\mathopen{[}\sigma,\kappa\mathclose{]}$ and $y(\sigma) \geq -\omega_{\sigma}$, hence
\begin{equation*}
x(t) \geq -\omega_{\sigma} t + \nu_{\sigma} + \omega_{\sigma} \sigma,
\quad \text{for all } \, t\in\mathopen{[}\sigma,\kappa\mathclose{]}.
\end{equation*}
By the choice of $t_{2}$, the previous inequality yields 
\begin{equation*}
x(t) \geq \dfrac{\nu_{\sigma}}{2}, \quad \text{for all } \, t\in \mathopen{[}\sigma,t_{2}\mathclose{]}.
\end{equation*}
By integrating twice $y'= \mu a^{+}(t)g(x)$, for every $t\in \mathopen{[}\sigma,t_{2}\mathclose{]}$, we have
\begin{equation*}
x(t) = x(\sigma) + \int_{\sigma}^{t} y(\xi) \,d\xi 
= \nu_{\sigma} + (t-\sigma) y(\sigma) + \mu \int_{\sigma}^{t} \int_{\sigma}^{z} a^{-}(\xi) g(x(\xi)) \,d\xi dz.
\end{equation*}
Then, it follows that
\begin{align*}
\nu_{2} \geq x(t_{2}) \geq \nu_{\sigma} - (t_{2}-\sigma) \omega_{\sigma} + \mu g_{m}(\nu_{\sigma}/2,\nu_{2}) \int_{\sigma}^{t_{2}} A^{-}(\sigma,\xi) \,d\xi > \nu_{2},
\end{align*}
a contradiction.

Secondly, let $y(t_{2}) \leq \omega$. Then, $y(t) \leq \omega$ for all $t\in \mathopen{[}\sigma,t_{2}\mathclose{]}$, thus $x(t_{2}) \leq \nu_{\sigma} + \omega (t_{2}-\sigma) \leq \nu_{2}$. A contradiction is obtained as above and the thesis follows.
\end{proof}

\begin{lemma}\label{lem-A3.2}
Let $\mu>0$, $\nu_{2}\in\mathopen{]}0,1\mathclose{[}$ and $t_{2}\in\mathopen{]}\sigma,\kappa\mathclose{[}$. For every $\omega\geq(1-\nu_{2})/(\kappa-t_{2})$, if $(x(t),y(t))$ is a solution of \eqref{IVP-2} with $(x(t_{2}),y(t_{2}))\in\mathopen{[}\nu_{2},+\infty\mathclose{[}\times\mathopen{[}\omega,+\infty\mathclose{[}$, then 
$(x(\kappa),y(\kappa))\in\mathopen{[}1,+\infty\mathclose{[}\times\mathopen{[}\omega,+\infty\mathclose{[}$.
\end{lemma}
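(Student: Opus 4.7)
The statement is essentially a one-line consequence of the sign structure of system \eqref{IVP-2} on $\mathopen{[}\sigma,\tau\mathclose{]}$, so the plan is very short.

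First I observe that on $\mathopen{[}\sigma,\tau\mathclose{]}$ we have $a^{-}(t)\geq 0$ almost everywhere by hypothesis $(a_{*})$, and the extended $g$ is nonnegative on all of $\mathbb{R}$. Consequently, for any solution $(x(t),y(t))$ of \eqref{IVP-2}, we have
\begin{equation*}
y'(t)=\mu\,a^{-}(t)\,g(x(t))\geq 0\quad\text{for a.e.\ }t\in\mathopen{[}\sigma,\tau\mathclose{]},
\end{equation*}
so that $y$ is nondecreasing on this interval. In particular, from $y(t_{2})\geq\omega$ it follows that $y(t)\geq\omega$ for every $t\in\mathopen{[}t_{2},\kappa\mathclose{]}$, which already gives the second coordinate of the desired conclusion, namely $y(\kappa)\geq\omega$.

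Next, integrating $x'=y$ on $\mathopen{[}t_{2},\kappa\mathclose{]}$ and using the lower bound just established together with $x(t_{2})\geq\nu_{2}$, I obtain
\begin{equation*}
x(\kappa)=x(t_{2})+\int_{t_{2}}^{\kappa}y(\xi)\,d\xi\geq\nu_{2}+\omega(\kappa-t_{2})\geq\nu_{2}+(1-\nu_{2})=1,
\end{equation*}
where the last inequality uses the standing hypothesis $\omega\geq(1-\nu_{2})/(\kappa-t_{2})$. This gives the first coordinate of the conclusion and completes the argument.

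There is essentially no obstacle here: the proof is purely a monotonicity-plus-integration step, and the only mild point to check is that the nonnegativity of $y'$ persists even if $x(t)$ happens to exceed $1$ (where the extended $g$ vanishes), which is harmless since $y'\geq 0$ is preserved. The lemma therefore serves as the companion ``transport'' step to Lemma~\ref{lem-A3.1}, pushing the trajectory from the intermediate slab $\{x\geq\nu_{2},\,y\geq\omega\}$ across to the strip $\{x\geq 1,\,y\geq\omega\}$ at time $\kappa$.
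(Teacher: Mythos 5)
Your argument is correct and coincides with the paper's own proof: both use $y'=\mu a^{-}(t)g(x)\geq0$ to get $y(t)\geq y(t_{2})\geq\omega$ on $\mathopen{[}t_{2},\kappa\mathclose{]}$, then integrate $x'=y$ to obtain $x(\kappa)\geq\nu_{2}+\omega(\kappa-t_{2})\geq1$. Nothing further is needed.
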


\begin{proof}
Let $\mu,\nu_{2},t_{2},\omega$ be fixed as in the statement.
Let $(x(t),y(t))$ be a solution of \eqref{IVP-2} with $(x(t_{2}),y(t_{2}))\in\mathopen{[}\nu_{2},+\infty\mathclose{[}\times\mathopen{[}\omega,+\infty\mathclose{[}$.
From $y'(t)\geq 0$ on $\mathopen{[}\sigma,\kappa\mathclose{]}$, we deduce that $y(t)\geq y(t_{2}) \geq \omega$ for every $t\in \mathopen{[}t_{2},\kappa\mathclose{]}$. In particular, $y(\kappa)\geq \omega$. Moreover, we have 
\begin{equation*}
x(\kappa) = x(t_{2}) + \int_{t_{2}}^{\kappa} y(\xi) \,d\xi \geq \nu_{2} + \omega (\kappa - t_{2}) \geq 1.
\end{equation*}
The thesis follows.
\end{proof}

\begin{lemma}\label{lem-A3.3}
Let $\nu_{\tau},\nu_{2}$ be such that $0 < \nu_{\tau} < \nu_{2} < 1$ and $\omega_{\tau} > 0$.
Given 
\begin{equation*}
\max\biggl{\{}\tau - \dfrac{\nu_{\tau}}{2\omega_{\tau}},\kappa \biggr{\}} \leq t_{2} < \tau, \quad 0 < \omega \leq \dfrac{\nu_{2}-\nu_{\tau}}{\tau-t_{2}},
\end{equation*} 
and
\begin{equation*}
\mu^{\star}(\nu_{2},\nu_{\tau},t_{2},\omega_{\tau}) := \dfrac{\nu_{2} - \nu_{\tau} + (\tau - t_{2}) \omega_{\tau}}{g_{m}(\nu_{\tau}/2,\nu_{2}) \int_{t_{2}}^{\tau} A^{-}(\xi,\tau) \,d\xi},
\end{equation*}
for every $\mu>\mu^{\star}(\nu_{2},\nu_{\tau},t_{2},\omega_{\tau})$, if $(x(t),y(t))$ is a solution of \eqref{IVP-2} with $(x(\tau),y(\tau)) \in \{\nu_{\tau}\}\times\mathopen{]}-\infty,\omega_{\tau}\mathclose{[}$, then $(x(t_{2}),y(t_{2})) \in \mathopen{]}\nu_{2},+\infty\mathclose{[} \times \mathopen{]}-\infty,-\omega\mathclose{[}$.
\end{lemma}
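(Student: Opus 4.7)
The strategy is to follow the scheme of Lemma~\ref{lem-A3.1} essentially verbatim, only integrating backward in time from $\tau$ to $t_{2}$ rather than forward from $\sigma$ to $t_{2}$. Observe that on $\mathopen{[}\sigma,\tau\mathclose{]}$ the equation reduces to $y'(t)=\mu a^{-}(t)g(x(t))\geq 0$, so $y$ is nondecreasing and $x$ is convex on any subinterval where $x\in\mathopen{[}0,1\mathclose{]}$. These two facts replace, in the backward direction, the monotonicity and convexity used in Lemma~\ref{lem-A3.1}.

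The plan is a proof by contradiction. Assume $(x(t_{2}),y(t_{2}))\notin\mathopen{]}\nu_{2},+\infty\mathclose{[}\times\mathopen{]}-\infty,-\omega\mathclose{[}$, which splits into the two cases $x(t_{2})\leq\nu_{2}$ and $y(t_{2})\geq -\omega$. I would dispatch the second case first: by the monotonicity of $y$ one gets $y(t)\geq y(t_{2})\geq -\omega$ on $\mathopen{[}t_{2},\tau\mathclose{]}$, so
\begin{equation*}
x(t_{2}) = x(\tau) - \int_{t_{2}}^{\tau} y(\xi)\,d\xi \leq \nu_{\tau} + \omega(\tau-t_{2}) \leq \nu_{2},
\end{equation*}
using $\omega\leq(\nu_{2}-\nu_{\tau})/(\tau-t_{2})$. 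This reduces the second case to the first.

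For the main case $x(t_{2})\leq\nu_{2}$, I would first produce a two-sided bound on $x$ over $\mathopen{[}t_{2},\tau\mathclose{]}$. Since $x$ is convex and $x(t_{2})\leq\nu_{2}$, $x(\tau)=\nu_{\tau}<\nu_{2}$, the maximum principle for convex functions on a compact interval gives $x(t)\leq\nu_{2}$ throughout $\mathopen{[}t_{2},\tau\mathclose{]}$. For the lower bound, monotonicity of $y$ and $y(\tau)<\omega_{\tau}$ yield $y(t)\leq \omega_{\tau}$ on $\mathopen{[}t_{2},\tau\mathclose{]}$, whence
\begin{equation*}
x(t) = \nu_{\tau} - \int_{t}^{\tau} y(\xi)\,d\xi \geq \nu_{\tau} - \omega_{\tau}(\tau-t) \geq \nu_{\tau} - \omega_{\tau}(\tau-t_{2}) \geq \tfrac{\nu_{\tau}}{2},
\end{equation*}
where the last step uses $t_{2}\geq \tau-\nu_{\tau}/(2\omega_{\tau})$. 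Hence $g(x(t))\geq g_{m}(\nu_{\tau}/2,\nu_{2})$ on $\mathopen{[}t_{2},\tau\mathclose{]}$.

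Finally, I would double-integrate. Integrating $y'\geq \mu a^{-}(t) g_{m}(\nu_{\tau}/2,\nu_{2})$ from $\xi$ to $\tau$ gives $y(\xi)\leq y(\tau) - \mu g_{m}(\nu_{\tau}/2,\nu_{2}) A^{-}(\xi,\tau)\leq \omega_{\tau}-\mu g_{m}(\nu_{\tau}/2,\nu_{2})A^{-}(\xi,\tau)$, and then integrating $x'=y$ from $t_{2}$ to $\tau$ yields
\begin{equation*}
x(t_{2}) \geq \nu_{\tau} - \omega_{\tau}(\tau-t_{2}) + \mu\, g_{m}(\nu_{\tau}/2,\nu_{2}) \int_{t_{2}}^{\tau} A^{-}(\xi,\tau)\,d\xi.
\end{equation*}
The assumption $\mu>\mu^{\star}(\nu_{2},\nu_{\tau},t_{2},\omega_{\tau})$ makes the right-hand side strictly larger than $\nu_{2}$, contradicting $x(t_{2})\leq\nu_{2}$. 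The only subtle point—and the place where care is needed—is the correct signing of the backward integrations and the application of $y(\tau)<\omega_{\tau}$ (rather than $\geq -\omega_{\tau}$ as in Lemma~\ref{lem-A3.1}); beyond that the argument is mechanical.
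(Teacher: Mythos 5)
Your proof is correct and is precisely the argument the paper intends: the authors omit the proof of Lemma~\ref{lem-A3.3}, declaring it analogous to Lemma~\ref{lem-A3.1}, and your backward-in-time mirroring (monotonicity of $y$, convexity of $x$, the two-sided bound $\nu_{\tau}/2\leq x\leq\nu_{2}$, and the double integration against $A^{-}(\xi,\tau)$) reproduces that scheme with the signs handled correctly.
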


\begin{lemma}\label{lem-A3.4}
Let $\mu>0$, $\nu_{2}\in\mathopen{]}0,1\mathclose{[}$ and $t_{2}\in\mathopen{]}\kappa,\tau\mathclose{[}$. For every $\omega\geq(1-\nu_{2})/(t_{2}-\kappa)$, if $(x(t),y(t))$ is a solution of \eqref{IVP-2} with $(x(t_{2}),y(t_{2}))\in\mathopen{[}\nu_{2},+\infty\mathclose{[}\times\mathopen{]}-\infty,-\omega\mathclose{]}$, then 
$(x(\kappa),y(\kappa))\in\mathopen{[}1,+\infty\mathclose{[}\times\mathopen{]}-\infty,-\gamma\mathclose{]}$.
\end{lemma}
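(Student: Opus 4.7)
The plan is to follow the same template as Lemma~\ref{lem-A3.2}, exploiting the sign of $y'$ on $\mathopen{[}\sigma,\tau\mathclose{]}$ together with a backward integration from $t_{2}$ to $\kappa$. Since the statement is the time-reversed counterpart of Lemma~\ref{lem-A3.2} (here the initial datum is prescribed at $t_{2}\in\mathopen{]}\kappa,\tau\mathclose{[}$ and we control the solution at the earlier point $\kappa$), the argument requires no new ingredient, only a careful handling of signs.

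First, I would observe that on the interval $\mathopen{[}\kappa,t_{2}\mathclose{]}\subseteq\mathopen{[}\sigma,\tau\mathclose{]}$ one has $a^{-}(t)\geq 0$ a.e., and since the (extended) function $g$ is nonnegative, the right-hand side of the second equation of \eqref{IVP-2} satisfies $y'(t)=\mu a^{-}(t)g(x(t))\geq 0$ a.e. Hence $y$ is nondecreasing on $\mathopen{[}\kappa,t_{2}\mathclose{]}$, which immediately yields
\begin{equation*}
y(\kappa)\leq y(t_{2})\leq -\omega.
\end{equation*}
This already gives the second component of the conclusion (reading $\gamma$ as $\omega$, in analogy with Lemma~\ref{lem-A3.2}).

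Next, the same monotonicity gives $y(t)\leq y(t_{2})\leq -\omega$ for every $t\in\mathopen{[}\kappa,t_{2}\mathclose{]}$. Integrating the first equation $x'=y$ from $\kappa$ to $t_{2}$ and using this upper bound,
\begin{equation*}
x(\kappa)=x(t_{2})-\int_{\kappa}^{t_{2}} y(\xi)\,d\xi \geq x(t_{2})+\omega(t_{2}-\kappa)\geq \nu_{2}+\omega(t_{2}-\kappa).
\end{equation*}
By the standing assumption $\omega\geq (1-\nu_{2})/(t_{2}-\kappa)$, the right-hand side is at least $1$, so $x(\kappa)\geq 1$.

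The argument is essentially a one-line sign chase, so there is no real obstacle: the only delicate point is to keep track of the signs when integrating backward in time and to ensure that $y'\geq 0$ on the whole interval $\mathopen{[}\kappa,t_{2}\mathclose{]}$, which is guaranteed by $t_{2}<\tau$ and the definition of $a^{-}$ from $(a_{*})$. No largeness assumption on $\mu$ is needed, consistently with the statement.
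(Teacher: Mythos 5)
Your proof is correct and follows exactly the argument the paper intends: the lemma's proof is omitted there as ``analogous to Lemma~\ref{lem-A3.2}'', and your sign-reversed version (monotonicity of $y$ from $y'=\mu a^{-}(t)g(x)\geq 0$ on $\mathopen{[}\kappa,t_{2}\mathclose{]}$, then integrating $x'=y$ backward) is precisely that adaptation. You also correctly interpreted the $-\gamma$ in the statement as a typo for $-\omega$, consistent with the conclusion of Lemma~\ref{lem-A3.2}.
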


The proofs of Lemma~\ref{lem-A3.3} and Lemma~\ref{lem-A3.4} are omitted since analogous to the ones of Lemma~\ref{lem-A3.1} and Lemma~\ref{lem-A3.2}.

\section*{Acknowledgments}

The authors are grateful to Professor Yuan Lou for inspiring to pursue investigations on the subject of the paper and to Professors Dimitri Breda and Fabio Zanolin for interesting discussions.

\bibliographystyle{elsart-num-sort}
\bibliography{FeSo_biblio}

\bigskip
\begin{flushleft}

{\small{\it Preprint}}

{\small{\it December 2017}}

\end{flushleft}

\end{document}